\newtheorem{thm}{Theorem}[section]
\newtheorem{lem}[thm]{Lemma}
\newtheorem{ques}[thm]{Question}
\newtheorem{cor}[thm]{Corollary}
\newtheorem{rmk}[thm]{Remark}
\newtheorem{claim}[thm]{Claim}
\DeclareMathOperator{\V}{Vert}
\DeclareMathOperator{\rk}{rk}
\title{Heegaard Floer homology and chirally cosmetic surgeries}
\author{Konstantinos Varvarezos}
\begin{document}
\maketitle

\begin{abstract}
A pair of surgeries on a knot is chirally cosmetic if they result in homeomorphic manifolds with opposite orientations.  We find new obstructions to the existence of such surgeries coming from Heegaard Floer homology; in particular, we make use of  immersed curve formulations of knot Floer homology and the corresponding surgery formula.  As an application, we completely classify chirallly cosmetic surgeries on odd alternating pretzel knots, and we rule out such surgeries for a large class of Whitehead doubles. Furthermore, we rule out cosmetic surgeries for L-space knots along slopes with opposite signs.
\end{abstract}

\textbf{Keywords:} cosmetic surgery; Dehn surgery; Heegaard Floer homology; immersed curves

\section{Introduction}

Given a knot $K$ in $S^3$ and an $r\in \mathbf{Q}\cup \{\infty\},$ we denote the \textit{Dehn surgery} on $K$ with slope $r$ by $S^3_r(K).$  Surgeries on $K$ along distinct slopes $r$ and $r'$ are called \textit{cosmetic} if $S^3_r(K)$ and $S^3_{r'}(K)$ are homeomorphic manifolds.  Furthermore, a pair of such surgeries is said to be \textit{purely cosmetic} if $S^3_r(K)$ and $S^3_{r'}(K)$ are homeomorphic as \textit{oriented} manifolds.  We use the symbol $\cong$ to denote ``orientation-preserving homeomorphic." If, on the other hand, $S^3_r(K) \cong -S^3_{r'}(K),$ we say this pair of surgeries is \textit{chirally cosmetic}; here $-M$ denotes the manifold $M$ with the opposite orientation.

No purely cosmetic surgeries are have been found on nontrivial knots in $S^3$; compare Problem 1.81(A) in \cite{KirbyList}.  Indeed, many powerful obstructions to purely cosmetic surgeries are known. Heegaard Floer techniques have been particularly effective for this; see \cite{NW}, and more recently, immersed curve versions of Heegaard Floer invariants were employed by Hanselmant to give even stronger obstructionss \cite{Hans}.  On the other hand, there are examples of chirally cosmetic surgeries.  For instance, $S^3_r(K) \cong -S^3_{-r}(K)$ whenever $K$ is an amphicheiral knot.  Also, $(2,n)$-torus knots are known to admit chirally cosmetic surgeries, precisely along the pairs of slopes $\frac{2n^2(2m+1)}{n(2m+1) \pm 1}$ for each positive integer $m$; see \cite{IIS,Mat}.  On the other hand, several families of knots have been shown not to admit any chirally cosmetic surgeries: genus-1 alternating knots \cite{IIS}, certain cables of nontrivial knots \cite{Ito2}, and certain pretzel knots \cite{CCP}. In fact, the following question remains open:

\begin{ques}\label{ques:gen}
Suppose a knot $K$ is not amphichiral and is not a $(2,n)$-torus knot.  Does $K$ admit any chirally cosmetic surgeries?
\end{ques}

Notice that in the case of the $(2,n)$-torus knots, each pair of  slopes which gives rise to chirally cosmetic surgeries has slopes with the same sign. Indeed, previously known obstructions coming from Heegaard Floer homology imply that knots which admit chirally cosmetic surgeries along slopes with the same sign are rare:

\begin{thm}[Theorem 9.8 of \cite{OSzRat}]\label{thm:rk}
Let $K$ be a knot and suppose $S^3_r(K)\cong \pm S^3_{r'}(K)$ for two distinct slopes $r,r'.$  Then either $r$ and $r'$ have opposite signs or $S^3_r(K)$ is an L-space (hence $K$ is an L-space knot).
\end{thm}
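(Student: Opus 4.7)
My plan is to apply the Ozsv\'{a}th--Szab\'{o} surgery formula for Heegaard Floer correction terms, combined with the orientation-reversal identity $d(-Y,\mathfrak{s}) = -d(Y,\mathfrak{s})$. Recall that for $p/q > 0$, the formula expresses $d(S^3_{p/q}(K),\mathfrak{s}_i) = d(L(p,q),\mathfrak{s}_i) - 2\, V^K_{p/q}(i)$, where the corrections $V^K_{p/q}(i) \ge 0$ are built from the non-increasing, eventually-vanishing sequences $V_s(K), H_s(K)$ attached to the knot Floer complex of $K$. The corrections vanish identically in $i$ iff $S^3_{p/q}(K)$ is an L-space. Pointwise this gives $d(S^3_{p/q}(K),\mathfrak{s}) \le d(L(p,q),\mathfrak{s})$, with equality throughout iff the surgery is an L-space; for $p/q < 0$ the sign of the correction (hence of the inequality) flips.

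\medskip
\noindent After mirroring $K$ if both slopes are negative, I may assume $r, r' > 0$. Matching first homologies forces $r = p/q$, $r' = p/q'$ with the same $p > 0$. The homeomorphism $S^3_r(K) \cong \pm S^3_{r'}(K)$ furnishes a bijection $\sigma$ of Spin$^c$ structures satisfying $d(S^3_r(K),\mathfrak{s}) = \pm d(S^3_{r'}(K), \sigma(\mathfrak{s}))$ at every $\mathfrak{s}$. In the chirally cosmetic case ($-$ sign), substituting the surgery formula rearranges to the key identity
$d(L(p,q),\mathfrak{s}) + d(L(p,q'), \sigma(\mathfrak{s})) = 2\,V^K_{p/q}(\mathfrak{s}) + 2\,V^K_{p/q'}(\sigma(\mathfrak{s})) \ge 0$
holding at every Spin$^c$ structure $\mathfrak{s}$.

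\medskip
\noindent The main obstacle is promoting this pointwise non-negativity to the conclusion that every $V^K_{p/q}(i)$ vanishes, which by the surgery formula exhibits $S^3_r(K)$ as an L-space; the Ozsv\'{a}th--Szab\'{o} characterization of L-space surgeries then identifies $K$ itself as an L-space knot. My plan is to sum the identity over all Spin$^c$ structures and invoke the control on $\sum_\mathfrak{s} d(Y,\mathfrak{s})$ provided by the Casson--Walker invariant $\lambda(Y)$, together with its surgery formula expressing $\lambda(S^3_{p/q}(K))$ as $\lambda(L(p,q))$ plus a universal multiple of $\tfrac{q}{p}\Delta''_K(1)$. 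The chirality hypothesis forces $\lambda(S^3_r(K)) = -\lambda(S^3_{r'}(K))$, producing a linear relation between the lens space contributions that can only be reconciled with non-negativity of the $V^K$'s when each of them is zero. The purely cosmetic same-sign case proceeds in parallel (and in fact aligns with Ni--Wu-type obstructions for purely cosmetic surgeries). The technical heart of the argument is thus the interplay between the pointwise surgery-formula inequality and the global Casson--Walker identity, which together leave no slack in the chiral same-sign setting outside of the L-space locus.
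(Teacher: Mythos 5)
The paper does not prove this statement---it is quoted from Ozsv\'ath--Szab\'o \cite{OSzRat}, where the argument is a \emph{rank} computation: by the rational surgery formula (cf.\ equation \eqref{eq:rk} of this paper), $\rk\widehat{HF}(S^3_{p/q}(K)) - p$ is a nonnegative quantity that, for fixed $p$, is strictly increasing in $q>0$ unless it vanishes identically; since $\rk\widehat{HF}$ is insensitive to orientation reversal, two distinct same-sign slopes $p/q,\ p/q'$ yielding homeomorphic manifolds force this excess to be zero, which is precisely the L-space condition. Your proposal instead runs through $d$-invariants, and it contains a fatal error at its foundation: the claim that the correction terms $V^K_{p/q}(i)$ ``vanish identically in $i$ iff $S^3_{p/q}(K)$ is an L-space'' is false in both directions. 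Being an L-space is a statement about the vanishing of $HF^+_{\mathrm{red}}$ (equivalently $\rk\widehat{HF}=|H_1|$), which the $d$-invariants do not detect: the figure-eight knot has $V_0=H_0=0$ but no L-space surgeries, while the right-handed trefoil has $V_0=1\neq 0$ yet $S^3_{5}(T_{2,3})$ is a lens space. Worse, the trefoil (and every $(2,n)$-torus knot) actually \emph{does} admit chirally cosmetic surgeries along pairs of positive slopes, so any argument that concludes ``all $V^K$'s vanish'' in the same-sign chiral case proves something false; the correct conclusion in that case is only that the surgeries are L-spaces, which is compatible with nonzero $V$'s.

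Two further steps would also not survive scrutiny even if the above were repaired. First, the Casson--Walker invariant does not control $\sum_{\mathfrak{s}} d(Y,\mathfrak{s})$ alone; the relevant identity expresses $|H_1(Y)|\,\lambda(Y)$ in terms of $\sum_{\mathfrak{s}}\bigl(\chi(HF^+_{\mathrm{red}}(Y,\mathfrak{s}))-\tfrac12 d(Y,\mathfrak{s})\bigr)$, so without independent control of the reduced Euler characteristics you cannot extract $\sum_{\mathfrak{s}} d$ from $\lambda$, and the ``linear relation that leaves no slack'' is not actually produced. Second, in the purely cosmetic same-sign case the pointwise identity becomes $d(L(p,q),\mathfrak{s})-d(L(p,q'),\sigma(\mathfrak{s}))=2V^K_{p/q}(\mathfrak{s})-2V^K_{p/q'}(\sigma(\mathfrak{s}))$, which has no sign, so the case does not ``proceed in parallel.'' To salvage the proof you should abandon $d$-invariants and work with the rank of $\widehat{HF}$ (or the total rank of $HF^+_{\mathrm{red}}$), which is where the L-space dichotomy genuinely lives.
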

Being an L-space knot is a very restrictive condition; it implies, for instance, fiberedness \cite{Ni} as well as an Alexander polynomial with coefficients in $\lbrace-1,0,-1\rbrace$ (see \cite{OSzRat} for a more precise constraint). One may hope to approach Question \ref{ques:gen}, therefore, by investigating the following:

\begin{ques}\label{ques:opp}
Suppose a knot $K$ is not amphichiral. Does $K$ admit any chirally cosmetic surgeries along slopes with opposite signs?
\end{ques}
A negative answer to this question would reduce Question \ref{ques:gen} to classifying cosmetic surgeries of L-space knots.  To this end, our new obstruction is a step towards a better understanding of Question \ref{ques:opp}. In particular,	 we obtain a restriction on the possible cosmetic surgeries along slopes of opposite signs for knots with Heegaard Floer-theoretic invariant tau equal to their genus.

\begin{restatable}{thm}{main}\label{thm:main}
Suppose a knot $K$ satisfies $\tau(K)=g(K)>0.$  If $S^3(\frac{p}{q})\cong-S^3(\frac{p}{q'})$ where $p,q,q \in \mathbf{Z}$ with $p,q>0$ and $q'<0,$ then $\frac{p}{q}\leq 2g(K)-1$ and $2p=(\V(K)+2g(K)-1)(q+q').$
\end{restatable}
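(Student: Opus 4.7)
The plan is to convert the chirally cosmetic condition into an orientation-preserving identification using the mirror, apply the Ni--Wu rational surgery formula to both sides to obtain an equality of correction terms, and then use the hypothesis $\tau(K)=g(K)$ to simplify the $V$ and $H$ invariants enough to extract both the bound and the arithmetic identity. Concretely, using $-S^3_r(K)\cong S^3_{-r}(mK)$ for the mirror $mK$ and setting $q^{\ast}:=-q'>0$, the hypothesis becomes the orientation-preserving homeomorphism
\[
S^3_{p/q}(K)\;\cong\;S^3_{p/q^{\ast}}(mK),
\]
so both surgery slopes are now positive, and the $d$-invariants must agree under the induced bijection on spin$^c$ structures.

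Next, I would apply the rational surgery formula, which expresses
\[
d(S^3_{p/q}(J),i)=d(L(p,q),i)-2\max\bigl(V_{\lfloor i/q\rfloor}(J),\,H_{\lceil (i-p)/q\rceil}(J)\bigr)
\]
in terms of the usual correction invariants. The hypothesis $\tau(K)=g(K)>0$ forces $V_j(K)>0$ exactly for $0\le j<g$ and $V_j(K)=0$ for $j\ge g$, with $H_j(K)=0$ in the relevant range. For the mirror $mK$, which satisfies $\tau(mK)=-g$, the roles swap: $V_j(mK)=0$ for $j\ge 0$ in the relevant range, while $H_j(mK)$ carries the nontrivial data (and corresponds, under mirroring, to $V_j(K)$). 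After substitution, exactly one argument of each $\max$ vanishes on each side, leaving a clean equation for every spin$^c$ structure.

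The bound $p/q\le 2g(K)-1$ then follows by examining the spin$^c$ structure $i$ for which $\lfloor i/q\rfloor =g(K)-1$ is the largest index with nonzero $V_j(K)$: if $p/q>2g-1$, the corresponding equation on the mirror side has both arguments of its $\max$ vanishing, forcing a mismatch with a positive $V_{g-1}(K)$ on the $K$ side. For the arithmetic identity, summing the $d$-invariant equalities over a carefully chosen family of spin$^c$ structures (equivalently, computing the total through an intersection count in the immersed-curve picture) causes the lens-space $d$-invariant totals to telescope into an expression proportional to $p(q+q^{\ast})=p(q-q')$, while the $V_j(K)$ contributions telescope, using $V_j-V_{j+1}\in\{0,1\}$ and $V_g(K)=0$, into a total equal to $p\cdot(\V(K)+2g(K)-1)$. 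Matching these yields the desired identity $2p=(\V(K)+2g(K)-1)(q+q')$.

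The main obstacle I expect is the bookkeeping in the last step: pinning down the precise spin$^c$ bijection induced by the orientation-reversing identification (which involves the conjugation action together with the mirror-induced labeling on $S^3_{p/q^{\ast}}(mK)$), and verifying that the summed lens-space $d$-invariant contributions collapse cleanly into the linear expression in $q+q'$ rather than a related Dedekind-type sum. The immersed-curve framework should be essential here, as it geometrizes the spin$^c$-indexed correction-term equations into intersection counts of slope-$p/q$ lines with the knot invariant of $K$, so that the required algebraic simplifications become lattice-point counts in which the hypothesis $\tau(K)=g(K)$ and the telescoping of the $V_j$ become transparent.
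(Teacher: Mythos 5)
Your proposal diverges fundamentally from the paper's argument, and the divergence exposes a genuine gap: you are trying to extract the identity $2p=(\V(K)+2g(K)-1)(q+q')$ from correction terms, but $\V(K)$ is not a correction-term invariant. The quantity $\V(K)$ counts the vertical segments of the immersed curve (equivalently, it is governed by the total rank of $\widehat{HFK}(K)$ together with $\tau$), whereas the $V_j$ and $H_j$ in the Ni--Wu formula see only the local equivalence class of $CFK^\infty$ --- two knots can have identical $V_j,H_j$ sequences and wildly different $\V$. So no summation of $d$-invariant equalities can produce $\V(K)$; your parenthetical ``(equivalently, computing the total through an intersection count in the immersed-curve picture)'' conflates two inequivalent quantities, namely the sum of correction terms and the total rank of $\widehat{HF}$. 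The obstacle you flag at the end is also fatal rather than technical: the per-spin$^c$ equations carry the terms $d(L(p,q),i)$ and $d(L(p,-q'),j)$ for two \emph{different} lens spaces, their totals are Dedekind-sum expressions, and their difference does not collapse to anything linear in $q+q'$. For the same reason your argument for the bound $p/q\le 2g-1$ does not close: a mismatch between the two $\max$ terms in a single spin$^c$ structure is not a contradiction until you control the difference of the two lens-space correction terms, which you have not done.

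The paper proves both claims with ranks, not $d$-invariants. For the bound, Lemmas \ref{lem:pos} and \ref{lem:neg} show via intersection counts in $\overline{T}_{\bullet}$ that when $\tau=g>0$ and $p/q>2g-1$, every spin$^c$ summand of $\widehat{HF}(S^3_K(p/q))$ has rank at most $\max_i n_i(K)$, while any negative slope produces some spin$^c$ summand of rank at least $\max_i n_i(K)+1$; this rules out any isomorphism respecting the spin$^c$ decomposition without ever needing to identify the spin$^c$ bijection or compare lens-space terms. For the identity, one simply equates the \emph{total} ranks using the closed-form count $\rk\widehat{HF}(S^3_K(p/q))=|p-q(2\tau-\epsilon)|+|q|\V(K)$ and resolves the absolute value using $p/q\le 2g-1$. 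If you want to salvage a correction-term route, you would at best recover a relation involving $\sum_j V_j$ rather than $\V(K)$, which is a different theorem.
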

Of course, non-amphichiral need not satisfy $\tau=0,$ much less have tau equal to genus.  As amphichiral knots satisfy $\tau=0,$ the hypotheses of this result apply to knots which are ``very far" from being amphichiral, in a certain sense.  Furthermore, even though the restriction on surgery slopes is \textit{a priori} an infinite set of pairs, it is possible to combine it with a similar kind of obstruction arising from finite type invariants to exclude all slopes in many situations; see section \ref{sec:combo} below for more details.

Our first application  of this obstruction is to the family of \textit{alternating odd pretzel knots}.  Up to mirror image, these are pretzel knots of the form ${P(-2k_1-1,2k_2-1,\dots,-2k_{2g+1}-1)},$ where the $k_i$ are all nonnegative.  See Figure \ref{fig:Pknot} for an illustration.  It is a fact that $g$ is the Seifert genus of the knot.  For convenience, we will often use the shorthand $K(k_1,k_2,\dots,k_{2g+1})$ to denote these knots.

\begin{figure}
\centering
\includegraphics[width=.75\textwidth]{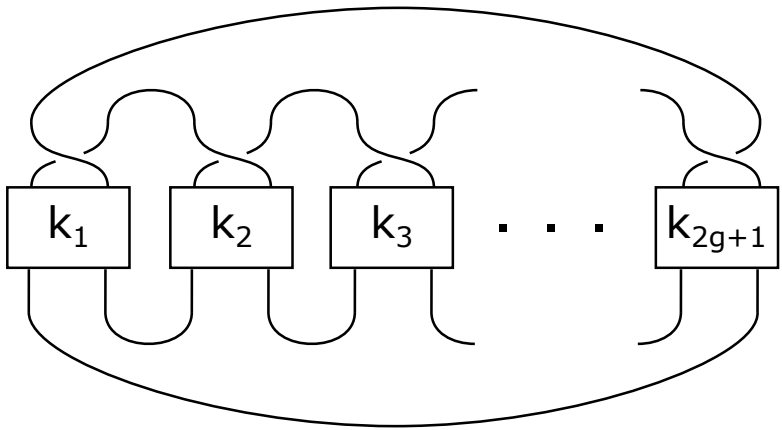}
\caption{The pretzel knot $P(-2k_1-1,-2k_2-1,-2k_3-1,\dots,-2k_{2g+1}-1).$  Here the boxes represent $k_i$ full left-handed twists of the strands passing through.}
\label{fig:Pknot}
\end{figure}

These knots are already known not to admit any purely cosmetic surgeries. 

\begin{restatable}{thm}{pretz}\label{thm:pretz}
Let $K=K(k_1,\dots,k_{2g+1})$ where $k_i\geq 0$ for all $i.$  If at least one $k_i>0,$ then $K$ does not admit any chirally cosmetic surgeries.
\end{restatable}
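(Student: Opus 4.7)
The plan is to treat the two sign possibilities for $(q,q')$ separately. For same-sign slopes, Theorem \ref{thm:rk} would force $K$ to be an L-space knot; since every $K(k_1,\dots,k_{2g+1})$ is alternating and the only alternating L-space knots are the $(2,n)$-torus knots, it suffices to verify that no member of this family with some $k_i>0$ is a $(2,n)$-torus knot, which follows from examining the Seifert genus together with the form of the Alexander polynomial. This disposes of the same-sign case.

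For opposite-sign slopes, I would apply Theorem \ref{thm:main}. Its hypothesis $\tau(K)=g(K)>0$ can be verified using that $\tau(K)=-\sigma(K)/2$ for alternating knots together with a standard Seifert-matrix computation giving $\sigma(K)=-2g$ for the negative pretzel $P(-(2k_1+1),\dots,-(2k_{2g+1}+1))$. Since $K$ is alternating, its knot Floer homology is determined by the Alexander polynomial and the signature, so $\V(K)$ can be read off explicitly as a function of $(k_1,\dots,k_{2g+1})$. Theorem \ref{thm:main} then restricts any opposite-sign chirally cosmetic pair to satisfy $\tfrac{p}{q}\leq 2g-1$ together with the linear relation $2p=(\V(K)+2g-1)(q+q')$.

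To finish, I would combine the above with a finite-type invariant obstruction of the kind referenced in section \ref{sec:combo}, which supplies a second relation among $p,q,q'$ involving the Conway coefficients $a_2(K)$ and (possibly) $a_4(K)$ of the knot. After computing those coefficients for $K(k_1,\dots,k_{2g+1})$, the two relations together with the sign and size constraints from Theorem \ref{thm:main} should admit no simultaneous integer solution whenever some $k_i>0$. The main obstacle is precisely this last step: it requires explicit closed-form expressions for $\V(K)$ and the Conway coefficients in terms of the parameters $k_i$, followed by a case analysis — likely splitting on the value of $g$ and on which $k_i$ are nonzero — that rules out every candidate slope pair. The algebraic computation of $\V(K)$ for this family, via the alternating-knot staircase model of $\widehat{HFK}$, is the most delicate ingredient.
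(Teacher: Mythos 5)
Your overall strategy is the same as the paper's: dispose of same-sign slopes via Theorem \ref{thm:rk} plus the fact that these knots are not L-space knots (your route through the classification of alternating L-space knots as $(2,n)$-torus knots works just as well as the paper's observation that the immersed curve has a figure-eight component, or that the Alexander polynomial is not monic), and attack opposite-sign slopes by verifying $\tau(K)=g(K)$ from the signature, computing $\V(K)$ from the thin structure of $\widehat{HFK}$ (the paper gets $\V(K)=\frac{1}{2}\det(K)+\tau(K)-\frac{3}{2}$ and then evaluates $\det(K)$ by a Seifert-matrix/symmetric-function computation), and feeding everything into the combined obstruction $v_3(K)(\V(K)+2g-1)=7a_2(K)^2-a_2(K)-10a_4(K)$ of Corollary \ref{cor:combo}.

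The genuine gap is in your final sentence: the expectation that this single equation ``should admit no simultaneous integer solution whenever some $k_i>0$'' is not something the paper establishes, and the inequality estimates it uses to derive a contradiction genuinely require large genus. Concretely, the paper's Lemma \ref{lem:pretz_2} (at least two $k_i$ nonzero) only closes for $g\geq 4$, and the double-twist subfamily $K(k,0,\dots,0)=J(-(2k+1),2g)$ needs a separate explicit computation of $a_4$ and is handled for $g\geq 5$ and then $g=4$ by hand; for $g=1$ the paper cites Ichihara--Ito--Saito and for $g=2,3$ it cites prior work on pretzel knots rather than deducing these cases from the obstruction. So your plan as written does not close the low-genus cases, and you should either import those results or supply a genuinely different argument there. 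Two smaller points you would need to make explicit: the two-or-more-nonzero case uses Cromwell's theorem that positive knots have $a_4\geq 0$ (you cannot avoid estimating $a_4$ just because you haven't computed it), and you need $q+q'\neq 0$ (which Theorem \ref{thm:main} supplies) before you can divide it out of Ito's relation to get the slope-independent equation.
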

\begin{rmk}
The knots $K(0,0,\dots,0)$ are the $(2,2g+1)$-torus knots, which are already known to admit (infinitely many pairs of) chirally cosmetic surgeries; see Corollary A.2 of \cite{IIS} for a full classification of the chirally cosmetic surgeries for these knots.
\end{rmk}

\begin{figure}
\centering
\includegraphics[width=.75\textwidth]{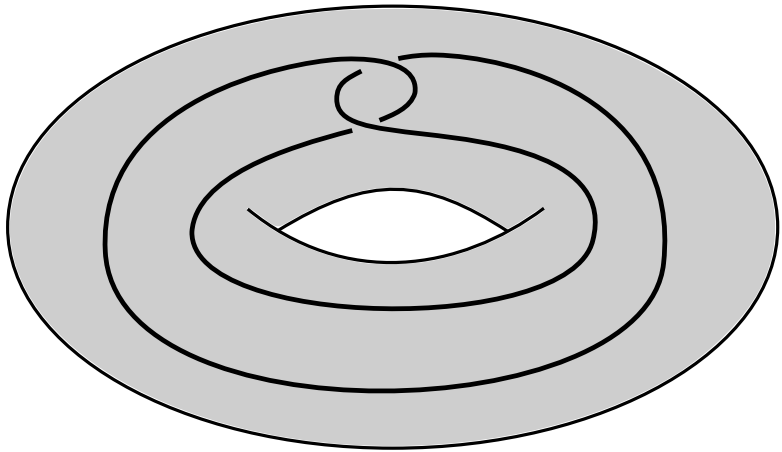}
\caption{The positive Whitehead pattern inside a solid torus.}
\label{fig:WD}
\end{figure}
Our next application pertains to Whitehead doubles of knots.  Recall that the \textit{$n$-framed positive Whitehead double} of a knot $K,$ which we denote $D_+(K,n)$ is the knot obtained by gluing in a solid torus $V$ with the positive Whitehead pattern (see Figure \ref{fig:WD}) to the exterior of $K$ such that the standard longitude of $V$ is identified with a $n$-framed longitude of $K,$ and the meridian of $V$ is glued to a meridian of $K.$

Our obstruction may be used to find:
\begin{restatable}{thm}{WD}
Let a knot $K$ be given, and suppose $a_2(K)= 0.$  If $|n|\geq 5,$ then $D_+(K,n)$ does not admit any chirally cosmetic surgeries.  If, moreover, $2\tau(K)>n$ for $n\in \lbrace-4,-2,1,4\rbrace,$ then $D_+(K,n)$ also admits no chirally cosmetic surgeries.
\end{restatable}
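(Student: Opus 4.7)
The plan is to combine Theorem~\ref{thm:main} with a Casson--Walker finite-type obstruction, splitting into cases according to $\tau(D_+(K,n))$. By Hedden's theorem, $\tau(D_+(K,n)) = 1$ precisely when $n < 2\tau(K)$ and equals $0$ otherwise, while every Whitehead double has Seifert genus $1$. In the first regime the hypothesis $\tau = g = 1$ of Theorem~\ref{thm:main} is met, so assuming chirally cosmetic slopes $p/q$, $p/q'$ for $L = D_+(K,n)$ with $p, q > 0 > q'$, Theorem~\ref{thm:main} forces the bound $p \leq q$ and the linear relation $2p = (\V(L) + 1)(q + q')$. The value of $\V(L)$ can be extracted from Hedden's description of the knot Floer complex of a Whitehead double, giving a small explicit integer depending on $n$ (where the hypothesis $a_2(K) = 0$ helps pin it down via its interplay with the Alexander polynomial of $L$).

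Independently, since orientation reversal negates the Casson--Walker invariant $\lambda$, the chirally cosmetic hypothesis yields $\lambda(S^3_{p/q}(L)) + \lambda(S^3_{p/q'}(L)) = 0$. Applying the surgery formula $\lambda(S^3_{p/q}(L)) = \lambda(L(p,q)) + (q/p)\,a_2(L)$ and the Dedekind-sum expression for $\lambda$ of a lens space produces a second equation in $p, q, q'$, and $a_2(L)$. Because the positive Whitehead pattern has winding number zero, the satellite formula for $a_2$ combined with $a_2(K) = 0$ reduces $a_2(D_+(K,n))$ to an explicit function of $n$ alone (essentially $-n$), nonzero whenever $n \neq 0$, which plugs directly into the Casson--Walker equation.

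The argument then branches on $n$. When $n < 2\tau(K)$, both obstructions are active: combining the Heegaard Floer linear equation, the Dedekind-sum identity, and the bound $p \leq q$ should exclude all candidate slope pairs, giving the listed small cases $n \in \{-4, -2, 1, 4\}$ under $2\tau(K) > n$, as well as most of $|n| \geq 5$. When $n \geq 5$ but $n \geq 2\tau(K)$ (so $\tau(L) = 0$ and Theorem~\ref{thm:main} is unavailable), I would first invoke Theorem~\ref{thm:rk} to handle same-sign slopes, after checking that $D_+(K,n)$ is not an L-space knot (its Alexander polynomial is incompatible with the L-space knot coefficient constraint for $n \neq 0$). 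For opposite-sign slopes, the Casson--Walker equation alone, with $a_2(L)$ large enough in absolute value, becomes Diophantine-infeasible, and this is exactly what forces the threshold $|n| \geq 5$.

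\textbf{Main obstacle.} The hardest step is the final Diophantine case analysis: the Heegaard Floer linear constraint and the Dedekind-sum identity from Casson--Walker must be simultaneously exhausted over integer triples $(p, q, q')$ satisfying the sign and slope bounds, for each prescribed $n$. This bookkeeping is rather than a single uniform argument, and it is precisely this step that carves out the values of $n$ appearing in the theorem (and presumably leaves $n \in \{-3, -1, 0, 2, 3\}$ uncovered).
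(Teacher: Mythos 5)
There is a genuine gap, and it centers on the invariant your proposal never mentions: $v_3$. The paper's engine for the first claim is Proposition 5.1 of \cite{IW}, $v_3(D_+(K,n)) = -2a_2(K) + \tfrac{n^2-n}{2}$; the hypothesis $a_2(K)=0$ is used precisely to make $v_3(D_+(K,n)) = \tfrac{n^2-n}{2}$ explicit. (You instead assign $a_2(K)=0$ the job of computing $a_2(D_+(K,n))$, but $a_2(D_+(K,n))=-n$ holds unconditionally, since the Whitehead pattern has winding number zero and the Conway polynomial of any double is $1-nz^2$.) With $v_3$ in hand, Theorem \ref{thm:ft} rules out $q+q'=0$ and then Lemma \ref{lem:ft2} yields $4|n|\leq 2\left|7+\tfrac{8}{n-1}\right|$, which fails for $|n|\geq 5$ --- and this handles \emph{all} slope pairs at once, with no case split on $\tau(D_+(K,n))$ and no L-space discussion. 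Your substitute for this step, the Casson--Walker surgery formula by itself, is a single Dedekind-sum equation in $(p,q,q')$; for a fixed $n$ it does not become ``Diophantine-infeasible'' over all admissible triples, so in the regime $n\geq 5$, $n\geq 2\tau(K)$ (where you correctly note Theorem \ref{thm:main} is unavailable) your argument for opposite-sign slopes does not close. The second finite-type relation coming from $v_3$ is exactly what is needed to pin down $(q+q')/p$ before the Casson-type inequality can be applied.

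The second gap is in your treatment of $\V(D_+(K,n))$. This is not ``a small explicit integer depending on $n$'': by Hedden's computation the rank of $\widehat{HFK}$ of a Whitehead double depends on the full knot Floer homology of the companion $K$, and $a_2(K)=0$ does not determine it. The paper sidesteps this entirely: Corollary \ref{cor:combo} (which is Theorem \ref{thm:main} combined with Theorem \ref{thm:ft}, again via $v_3$) forces $\tfrac{\V(D_+(K,n))+1}{2} = 7+\tfrac{8}{n-1}$, and one only needs the \emph{parity} of $\V$ --- it is odd because $\tau(D_+(K,n))=1$ --- to see that the left side is an integer while the right side is not for $n\in\{-4,-2,4\}$ (the case $n=1$ dies separately since then $v_3=0$ but $7n^2+n=8\neq 0$). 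Without that parity observation, your plan to ``extract $\V$'' and then do a slope-by-slope Diophantine analysis cannot be carried out from the stated hypotheses.
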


In the final section, we turn our attention to the case of L-space knots.  As an application of our new obstruction, we prove a partial converse to Theorem \ref{thm:rk}:

\begin{restatable}{thm}{lspace}\label{thm:lspace}
If $K$ is a nontrivial L-space knot, then $K$ does not admit any cosmetic surgeries along slopes of opposite signs.
\end{restatable}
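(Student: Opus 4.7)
My plan is to combine Theorem~\ref{thm:main} with a classical finite type (Casson--Walker) obstruction, exploiting the rigidity of the Heegaard Floer package for L-space knots. Replacing $K$ with its mirror if necessary, I may assume $K$ is a positive L-space knot, so $\tau(K)=g(K)=:g\geq 1$ and Theorem~\ref{thm:main} is available. Suppose for contradiction that $S^3_{p/q}(K)\cong \epsilon\, S^3_{p/q'}(K)$ for some $\epsilon\in\{\pm 1\}$, with $p,q>0$ and $q'<0$.

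For the purely cosmetic case ($\epsilon=+1$), observe first that $S^3_{p/q}(K)$ is an L-space exactly when $p/q\geq 2g-1$, whereas $S^3_{p/q'}(K)$ is never an L-space since $p/q'<0$; the homeomorphism therefore forces $p/q<2g-1$. I would then apply Hanselman's immersed curve obstructions \cite{Hans} to purely cosmetic surgeries, which drastically constrain the pair $(p/q,p/q')$ and leave only a short list of candidate slopes; each can be eliminated by direct computation of $d$-invariants via the Ozsv\'ath--Szab\'o surgery formula for L-space knots, using the fact that the $V$-invariants of the mirror $\bar K$ vanish while those of $K$ do not.

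For the chirally cosmetic case ($\epsilon=-1$), Theorem~\ref{thm:main} applies directly and yields $p/q\leq 2g-1$ together with
$$2p=(\V(K)+2g-1)(q+q');$$
since $\V(K)\geq 0$ and $g\geq 1$, this forces $q+q'>0$. Independently, orientation reversal negates the Casson--Walker invariant, so $\lambda_{CW}(S^3_{p/q}(K))+\lambda_{CW}(S^3_{p/q'}(K))=0$, and the standard surgery formula for $\lambda_{CW}$ yields a second identity of the shape
$$\frac{q+q'}{p}\cdot a_2(K)=C(p,q,q'),$$
where $C$ is an explicit expression in Dedekind sums and $a_2(K)>0$ for any nontrivial L-space knot. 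Substituting the value of $q+q'$ coming from Theorem~\ref{thm:main} into this identity converts it into a Diophantine equation in $(p,q,q')$ alone, which, under the coprimality and sign constraints $0<p/q\leq 2g-1$, I expect to have no solutions.

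The main obstacle will be this last Diophantine step: extracting the contradiction requires a careful analysis of how $\V(K)$ and $a_2(K)$ are tied together through the Alexander polynomial of an L-space knot, whose coefficients lie in $\{-1,0,+1\}$. This parallels, but is technically finer than, the finite-type combinations used earlier in the paper to treat the pretzel and Whitehead double families.
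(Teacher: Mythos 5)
There is a genuine gap at the decisive step of your chirally cosmetic case. After invoking Theorem~\ref{thm:main} to get $2p=(\V(K)+2g-1)(q+q')$, you propose to feed $q+q'$ into a Casson--Walker surgery identity of the form $\frac{q+q'}{p}a_2(K)=C(p,q,q')$ and conclude via ``a Diophantine equation\dots which I expect to have no solutions.'' That expectation is the entire theorem, and nothing in your sketch supports it. Using $\V(K)=2g-1$ for an L-space knot (Corollary~\ref{cor:LS}), your substitution reduces to the single equation $\frac{a_2(K)}{2g-1}=C(p,q,q')$, where the right-hand side is built from Dedekind sums and ranges over a rather flexible set of rationals as $(p,q,q')$ varies; there is no a priori reason it avoids the fixed value on the left. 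Indeed, $(2,n)$-torus knots do satisfy such finite-type cosmetic identities for infinitely many slope pairs, so one cannot hope for a contradiction from numerical finite-type data alone without a much finer analysis tying $a_2(K)$ to $g$ and to the admissible slopes --- an analysis you do not carry out. (Your unproved assertion that $a_2(K)>0$ for every nontrivial L-space knot is a smaller instance of the same problem.) The purely cosmetic case is also left to an uncompleted $d$-invariant computation, though there it is easy to close: a nontrivial L-space knot has $\tau(K)=\pm g(K)\neq 0$, so \cite{NW} already forbids purely cosmetic surgeries, which is exactly what the paper does.

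By contrast, the paper's proof of the chirally cosmetic case does not go through numerical invariants at all. It first pins down the slopes exactly, $q'=-q+\frac{p}{2g-1}$ (Lemma~\ref{lem:LSslopes}), then exploits a shear symmetry $r_V\circ\Phi_g$ of the lifted immersed curve of an L-space knot to produce a rank-preserving bijection $\psi$ of Spin$^c$-structures between the two surgeries, and finally compares \emph{relative Maslov gradings}: within each Spin$^c$-structure the gradings form a zig-zag pattern, the two sides differ by shift operations $S_i$ (Lemma~\ref{lem:shifts}), at least one shift is forced when $g\geq 2$ (Claim~\ref{cla:shift}), and the averaged grading difference $\Delta_{avg}$ strictly increases under any shift (Lemma~\ref{lem:avg_shift}), contradicting the grading equality forced by an orientation-reversing homeomorphism. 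This grading-level information is precisely what your rank-and-numerical-invariant approach cannot see, and it is where the actual content of the theorem lives.
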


\section{Obstructions from Heegaard Floer Homology}

\subsection{Heegaard/Knot Floer Floer homology and immersed curves}
Here we review the relevant background regarding the Heegaard Floer theoretic invariants we shall use to obstruct cosmetic surgeries.  Recall that the ``hat" version of Heegaard Floer homology assigns to a (closed, oriented) 3-manifold $M$ a graded vector space over $\mathbf{F}=\mathbf{Z}/2\mathbf{Z}$ denoted $\widehat{HF}(M).$  If $b_1(M)=0,$ then it has a $\mathbf{Z}$-grading.  Furthermore, this vector space decomposes along the Spin$^c$-structures of $M,$ which are (non-canonically) in bijection with the elements of $H_1(M;\mathbf{Z}).$  That is, for each Spin$^c$-structure $\mathfrak{s}\in Spin^c(M),$ we have a vector space $\widehat{HF}(M,\mathfrak{s})$ such that
\[
\widehat{HF}(M) = \bigoplus_{\mathfrak{s}\in Spin^c(M)} \widehat{HF}(M,\mathfrak{s}).
\]
Recall also that there is a related construction, originally due to Ozsv\'{a}th/Szab\'{o} \cite{OSzHFK} and Rasmussen \cite{Ras}, that gives invariants of (nullhomologous) knots in 3-manifolds.  Since we are concerned with knots in $S^3,$ we focus on this case.  In general, one assigns to a knot $K\subset S^3$ a graded, filtered chain complex $CFK^-(K)$ over $\mathbf{F}[U],$ where $U$ is a formal variable.  The (filtered) chain homotopy type of this chain complex is an invariant of the knot.  From this general invariant, there are a few simpler ones that can be derived; in particular, there is a ``hat" version $\widehat{CFK}(K)$ which is obtained by setting $U=0.$  The homology of (the associated graded object corresponding to) this chain complex, denoted $\widehat{HFK}(K)$ is a bigraded vector space over $\mathbf{F}.$  The two gradings are called the Alexander and Maslov gradings, and we denote the subspace of elements with Alexander grading $s$ and Maslov grading $\mu$ by $\widehat{HFK}_{\mu}(K,s).$  From this, one also has a connection to the classical Alexander polynomaial as follows:
\begin{equation}\label{eq:euler}
\Delta_K(t) = \sum_{\mu,s}  \rk \widehat{HFK}_{\mu}(K,s)(-1)^{\mu} t^s.
\end{equation}

We now turn to a review of the immersed curve versions of these Heegaard Floer theoretic invariants.  In the general setting, these immersed curve invariants are a reformulation of certain bordered invariants (that is, invariants for 3-manifolds with boundary); for our purposes, we shall use the construction for manifolds with torus boundary, first studied by Hanselman/Rasmussen/Watson \cite{HRW}.  This invariant assigns to a 3-mainfold with (parametrized) torus boundary $M$ a collection of immersed curves (with local systems) on the punctured torus $T_{\bullet}$ which encodes a certain bordered invariant (specifically, $\widehat{CFD}(M)$).  For our purposes, we will be interested in the case where $M=S^3\setminus \mathring{N}(K),$ the exterior of some knot $K.$  It is a fact that $\widehat{CFD}(M)$ is determined by $CFK^-(K)$ in this context; see Theorem 11.26 of \cite{LOT}. As a result, one may think of the immersed curve invariant of the knot exterior as encoding the Knot Floer homology of $K$ (see \cite{HRW2} for more details).  Moreover, the pairing results from the bordered theory have a particularly nice formulation in the immersed curve setting:
\begin{thm}[Theorem 2 of \cite{HRW}]\label{thm:pairing}
Given two 3-manifolds $M_1,M_2$ with (parametrized) torus boundary, let $\Gamma_i$ be the immersed curve invariant associated to each $M_i.$  If $Y=M_1\cup_{\phi}M_2$ is the closed 3-manifold obtained by gluing $M_1$ and $M_2$ along their boundaries by $\phi:\partial M_2\rightarrow \partial M_1,$ then
\[
\widehat{HF}(Y)\cong HF(\Gamma_1,\overline{\phi}(\Gamma_2)),
\]
where $HF$ denotes the Lagrangian Floer homology on the punctured torus $T_{\bullet},$ and $\overline{\phi}$ is the map $\phi$ composed with the elliptic involution.
\end{thm}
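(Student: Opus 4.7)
The plan is to reduce the pairing statement for immersed curves to the bordered Floer pairing theorem of Lipshitz--Ozsv\'ath--Thurston, and then to translate the algebraic box tensor product into a geometric count of intersection points and bigons. Concretely, for each bordered three-manifold $M_i$ with parametrized torus boundary one has a type $D$ structure $\widehat{CFD}(M_i)$ (or equivalently, by taking a bar-dual, a type $A$ structure) over the torus algebra $\mathcal{A}(T^2).$ The Lipshitz--Ozsv\'ath--Thurston pairing theorem asserts
\[
\widehat{HF}(M_1\cup_{\phi}M_2)\cong H_*\bigl(\widehat{CFA}(M_1)\boxtimes \widehat{CFD}(-M_2)\bigr),
\]
where the orientation reversal on $M_2$ (or equivalently composition of $\phi$ with the elliptic involution) accounts for how the two parametrizations on the common boundary are matched. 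My first step would be to take this as a black box.

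The next step is the geometrization of these bordered invariants, following the construction of Hanselman--Rasmussen--Watson: one shows that every extendable type $D$ structure $\widehat{CFD}(M)$ is homotopy equivalent to a ``standard'' one whose underlying graph admits an embedding into the punctured torus $T_{\bullet}$ as an immersed multicurve $\Gamma_M$ (possibly decorated by local systems). The generators of the type $D$ structure correspond to intersection points of $\Gamma_M$ with a basepointed arc system in $T_{\bullet},$ and the operations encode the way the curve traverses the edges of the parametrization. This reduction is carried out by an iterative simplification procedure (``loop--type'' decomposition) and is the most technical ingredient; I would cite it directly rather than re-derive it.

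The heart of the proof is then to identify the chain complex $\widehat{CFA}(M_1)\boxtimes \widehat{CFD}(M_2)$ with the Lagrangian intersection Floer chain complex $CF(\Gamma_1,\overline{\phi}(\Gamma_2))$ on $T_{\bullet}.$ Generators match tautologically, since both are given by intersections of the two curve systems after applying the gluing map. For the differential, one shows that each sequence of algebra elements contributing to the box tensor product differential corresponds uniquely to an immersed bigon in $T_{\bullet}$ with boundary on $\Gamma_1\cup \overline{\phi}(\Gamma_2),$ and vice versa. This is essentially a local calculation on each region of the punctured torus cut out by the parametrizing arcs, together with a gluing argument to assemble local bigons into global ones.

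The main obstacle is the bigon--to--algebra dictionary in the last step: one must verify that every holomorphic bigon in $T_{\bullet}$ lifts to a sequence of domains compatible with the $\mathcal{A}_\infty$ operations on $\widehat{CFA}$ and the $\delta^1$ operations on $\widehat{CFD},$ and that no spurious contributions appear (e.g.\ bigons crossing the puncture, which must be excluded as they correspond to powers of $U$). Once this geometric--algebraic correspondence is nailed down, the theorem follows by combining it with the bordered pairing theorem; the elliptic involution $\overline{\phi}$ appears naturally because the type $A$ structure on one side is obtained from the type $D$ structure on the other by an orientation reversal on the boundary torus.
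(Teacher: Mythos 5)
This statement is quoted in the paper as Theorem~2 of \cite{HRW}; the paper offers no proof of it and simply imports it as a black box, so there is no in-paper argument to compare yours against. Your outline does faithfully reproduce the architecture of the original Hanselman--Rasmussen--Watson proof: start from the Lipshitz--Ozsv\'ath--Thurston pairing theorem $\widehat{HF}(Y)\simeq H_*(\widehat{CFA}(M_1)\boxtimes\widehat{CFD}(M_2))$, realize each (extendable) type $D$ structure as an immersed train track and then as a multicurve with local systems in $T_{\bullet}$ via the loop-type reduction, and finally match the box-tensor differential with immersed bigons, with the elliptic involution accounting for the $A$-versus-$D$ duality on the boundary. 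So as a summary of where the theorem comes from, this is accurate.

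As a proof, however, it is a roadmap rather than an argument: the two steps you explicitly defer --- the reduction of an arbitrary extendable type $D$ structure over the torus algebra to curve-representable form, and the bigon--algebra dictionary for the box tensor product --- are precisely the content of the cited theorem, so nothing is actually established beyond the citation. Two places deserve more care if you were to flesh this out. First, local systems cannot be relegated to a parenthesis: when a component carries a nontrivial local system, the Floer complex is not just generated by intersection points but by intersection points tensored with the fibers of the local systems, and the differential twists by the monodromy; the clean statement ``generators match tautologically'' only holds after this is set up. Second, ``holomorphic bigon'' counting in $T_{\bullet}$ must be handled combinatorially (the relevant moduli are rigid immersed bigons avoiding the puncture), and one must separately treat the degenerate case of parallel components, where minimal position yields an immersed annulus rather than a bigon-free configuration --- exactly the caveat the paper itself records in Remark~\ref{rmk:tight}.
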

As the solid torus has a particularly simple bordered invariant, in particular, the immersed curve is a single embedded (non-nullhomotopic) circle, Theorem \ref{thm:pairing} gives a useful geometric way to compute Heegaard Floer homology of Dehn surgeries.  More precisely, if $\Gamma$ is the immersed curve invariant of a knot $K,$ then $\widehat{HF}(S^3_K(\frac{p}{q}))\cong HF(\Gamma,L_{p/q}),$ where $L_{p/q}$ is the embedded circle of ``slope" $\frac{p}{q}$ on the (punctured) torus with respect to the standard meridian-longitude parametrization.
\begin{rmk}\label{rmk:tight}
In order to compute the Lagrangian Floer homology of (immersed) curves, we shall generally assume the curves have been placed in minimal position so that there are no immersed bigons bounded by the paired curves, and hence the rank is simply the number of intersection points as the differential is trivial.  The only possible exception would be if two parallel curves were being paired, in which case minimal position would result in an immersed annulus being bounded by the pair.  However, since we are considering pairings with curves of the form $L_{p/q},$ we shall see below that the only way this can occur is if $p=0,$ which we do not consider as there can be no cosmetic surgeries involving slope zero.  Moreover, for the same reason, we need not worry about non-trivial local systems.  More precisely, interpreting a non-trivial local system on a curve as multiple copies of the curve along with some extra ``train-track" connections, the only time this affects the intersection number is if there are parallel curves being paired.
\end{rmk}
\begin{figure}
\centering
\includegraphics[width=.25\textwidth]{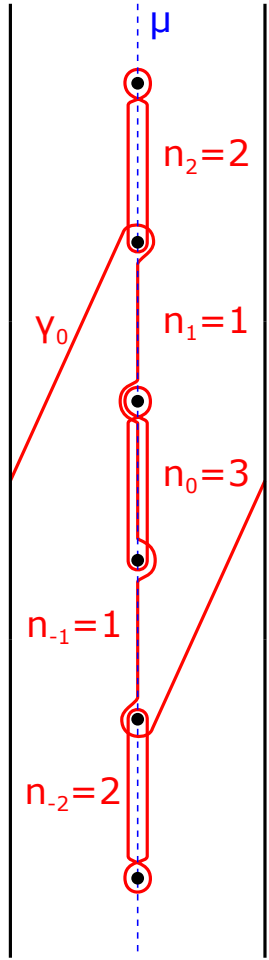}
\caption{An example of immersed curves in $\overline{T}_{\bullet}$ corresponding to some knot-like complex.  The left and right edges are identified.  We can see that this knot would have genus 3, tau 2 and epsilon 1.}
\label{fig:ex}
\end{figure}
For our purposes, it will be more useful to consider lifts of the immersed curves to the cylindrical cover $\overline{T}_{\bullet}$ of the punctured torus. More precisely, consider the immersed curve invariant $\Gamma$ coming from the exterior of a knot $K.$ We identify $T$ with the boundary of the knot exterior and consider the covering space $\overline{T}$ associated with the subgroup of $\pi_1(T)$ generated by the (Seifert) longitude of $K.$  Restricting this cover to the punctured torus $T_{\bullet},$ we have the covering space $\overline{T}_{\bullet},$ which is the infinitely-punctured cylinder.  Following \cite{Hans}, we will parametrize this as $ S^1\times \mathbf{R} \setminus \lbrace*\rbrace \times (\mathbf{Z}+\frac{1}{2});$ that is, the punctures are all arranged with ``heights" at the half-integers.  We distinguish a special lift $\mu$ of the meridian, which is the ``vertical" line passing through the punctures.  The punctures cut $\mu$ into a union of disjoint intervals of the form $(i-\frac{1}{2},i+\frac{1}{2})$ for each $i\in \mathbf{Z}.$  We refer to points in such an interval as being at \textit{level $i.$}  There is a standard lift of $\Gamma$ to $\overline{T}_{\bullet},$ which we denote as $\overline{\Gamma}.$  The symmetries of Knot Floer homology imply that, after a suitable homotopy of the immersed curves, $\overline{\Gamma}$ has a $180^{\circ}$ rotational symmetry, and we take the symmetry point to be the ``origin" (that is, the point on $\mu$ at height 0).  Another important property of $\overline{\Gamma}$ is that there is a distinguished component, denoted $\gamma_0,$ which is the unique curve that ``wraps around" the cylinder $\overline{T}_{\bullet}.$  All other components of $\overline{\Gamma}$ may be homotoped to lie in an arbitrarily small neighborhood of $\mu.$ We shall usually assume that all the immersed curves have undergone an appropriate homotopy so that they are ``pulled tight" with respect to some $\epsilon$-neighborhood of the punctures.  Such curves will generically pair minimally with ``lines" which are lifts of $L_{p/q},$ as in Remark \ref{rmk:tight}.  This will be of interest to us due to the following consequence of Proposition 46 of \cite{HRW} (see also Theorem 14 of \cite{Hans}):
\begin{thm}\label{thm:pairS}
Let $K \subset S^3$ be a knot and let $\Gamma \subset T_{\bullet}$ be its immersed curve invariant, with $\overline{\Gamma} \subset \overline{T}_{\bullet}$ the standard lift. Given $\frac{p}{q}\in \mathbf{Q}\cup \{\infty\},$ for each $\mathfrak{s}\in Spin^c(S^3_K(\frac{p}{q})),$ there exists a (unique homotopy class of) lift $\overline{L^{\mathfrak{s}}}_{p/q}$ of $L_{p/q}$ such that:
\[
\widehat{HF}\left(S^3_K\left(\frac{p}{q}\right),\mathfrak{s}\right) \cong HF\left(\overline{\Gamma},\overline{L^{\mathfrak{s}}}_{p/q}\right).
\]
\end{thm}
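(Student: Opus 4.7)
The plan is to deduce this from the Spin$^c$-refined version of the pairing theorem, which is Proposition 46 of \cite{HRW} (reformulated in the cover in Theorem 14 of \cite{Hans}). The statement is essentially a translation of that result into the cylindrical cover setup, so the main work is bookkeeping: matching homotopy classes of lifts of $L_{p/q}$ with $\mathrm{Spin}^c(S^3_K(p/q))$ and checking that the Lagrangian intersection picture downstairs decomposes accordingly.

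First I would write $S^3_K(\frac{p}{q}) = (S^3 \setminus \mathring{N}(K)) \cup_\phi V$, where $V$ is a solid torus glued by the map $\phi$ sending the meridian of $V$ to the $\frac{p}{q}$-curve on $\partial N(K)$. The bordered invariant of $V$ corresponds under Theorem \ref{thm:pairing} to a single embedded curve of slope $\frac{p}{q}$, namely $L_{p/q}$. Thus Theorem \ref{thm:pairing} gives $\widehat{HF}(S^3_K(\frac{p}{q})) \cong HF(\Gamma, L_{p/q})$ as ungraded vector spaces. The refined pairing statement from \cite{HRW} upgrades this to a direct sum decomposition over relative $\mathrm{Spin}^c$-structures on the two bordered pieces that are compatible along the boundary; summing over the relative structures on $V$ that glue to a fixed $\mathfrak{s}$ on the surgered manifold yields the $\mathfrak{s}$-summand of $\widehat{HF}$.

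Next I would translate this decomposition into the cover $\overline{T}_\bullet \to T_\bullet$. This cover is associated to the longitude subgroup of $\pi_1(T) \cong \mathbf{Z}\langle\mu\rangle \oplus \mathbf{Z}\langle\lambda\rangle$, so its deck group is $\mathbf{Z}$ acting by vertical translation (the meridian direction). The standard lift $\overline{\Gamma}$ is fixed by our choice of a lifted basepoint. The curve $L_{p/q}$ lifts to a countable family of parallel lines in $\overline{T}_\bullet$; deck translations act on this family, and since $L_{p/q}$ has homological intersection $p$ with $\mu$, the set of lifts modulo the deck action that fixes $\overline{\Gamma}$ is naturally a $\mathbf{Z}/p\mathbf{Z}$-torsor. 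I would then identify this $\mathbf{Z}/p\mathbf{Z}$-torsor with $\mathrm{Spin}^c(S^3_K(\frac{p}{q})) \cong H^2(S^3_K(\frac{p}{q});\mathbf{Z}) \cong \mathbf{Z}/p\mathbf{Z}$ by tracking how the relative $\mathrm{Spin}^c$ grading on the bordered invariant of $V$ (which corresponds to translating $L_{p/q}$) matches the affine action of $H^2$ on $\mathrm{Spin}^c$.

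The main obstacle is this matching: pinning down a canonical (or at least consistent) bijection between homotopy classes of lifts of $L_{p/q}$ and $\mathrm{Spin}^c$-structures, and verifying that the intersection points of $\overline{\Gamma}$ with a given lift $\overline{L^\mathfrak{s}}_{p/q}$ are exactly those generating $\widehat{HF}(S^3_K(\frac{p}{q}),\mathfrak{s})$. Once this identification is in hand, the uniqueness of the homotopy class of $\overline{L^\mathfrak{s}}_{p/q}$ follows from the fact that each relative $\mathrm{Spin}^c$-structure on $V$ is realized by a unique straight lift up to proper homotopy in the cylinder. The Floer homology computation itself is immediate because $\overline{\Gamma}$ and $\overline{L^\mathfrak{s}}_{p/q}$ can be placed in minimal position with no bigons (cf.\ Remark \ref{rmk:tight}), so the differential vanishes and the rank equals the intersection number.
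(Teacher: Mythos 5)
Your proposal is correct and matches the paper's treatment: the paper offers no proof of this statement, presenting it directly as a consequence of Proposition 46 of \cite{HRW} (see also Theorem 14 of \cite{Hans}), which is exactly the Spin$^c$-refined pairing result you invoke. The bookkeeping you sketch --- matching the $|p|$ homotopy classes of lifts of $L_{p/q}$ in the cylinder with $Spin^c(S^3_K(\frac{p}{q}))\cong \mathbf{Z}/p\mathbf{Z}$ via the deck action --- is the intended (and standard) deduction, so no further comparison is needed.
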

Here $HF$ is the Lagrangian Floer homology of immersed curves in the (infinitely) punctured cylinder $\overline{T}_{\bullet}.$ As mentioned above, the rank of this is computed in practice as the minimal intersection number between the two (multi)curves.
Alternatively, we may slightly perturb the ``pulled tight" position to make all segments vertical outside of $\epsilon$-neighborhoods of the punctures, except for the part of $\gamma_0$ that wraps around.  With the curves in this position, for each $i\in \mathbf{Z},$ we denote by $n_i(K)$ the number of vertical segments at level $i.$ Note that this is an invariant of the knot $K,$ as indeed one could define it as the minimal number of such vertical segments among all representatives of the homotopy class of $\overline{\Gamma}.$  See Figure \ref{fig:ex} for an example of a knot-like immersed curve.  Several invariants can be extracted from such immersed curve diagrams:
\begin{itemize}
\item The Seifert genus $g(K)$ is the maximum level at which some curve of $\overline{\Gamma}$ intersects $\mu.$
\item The tau invariant $\tau(K)$ is the level at which the distinguished curve $\gamma_0$ first intersects $\mu$ (when following the curve from the ``left").
\item The epsilon invariant $\epsilon(K)$ is given by the direction $\gamma_0$ ``turns" after the aforementioned first intersection with $\mu:$ 1 if downward, -1 if upward, and 0 if there is no turning.
\end{itemize}
In the ``pulled tight" configuration, we can thus see that the unique non-vertical segment (which is part of $\gamma_0$) will have a slope of $2\tau(K)-\epsilon(K).$  Let us define $\V(K)$ to be the total number of vertical segments of $\overline{\Gamma};$ that is,
\[
\V(K) = \sum_i n_i(K).
\]
In the punctured torus, we see that, when pulled tight, $\Gamma$ effectively has the form of $\V(K)$ copies of the meridian circle along with a circle of slope $2\tau(K)-\epsilon(K).$  Since the ``pulled tight" configuration will have minimal intersection number with any (transverse, non-nullhomologous) embedded circle, we find the following formula for the rank of the Heegaard Floer homology of knot surgeries:
\begin{equation}\label{eq:rk}
\rk \widehat{HF}\left(S^3_K\left(\frac{p}{q}\right)\right) = |p-q(2\tau(K)-\epsilon(K))| + |q|\V(K).
\end{equation} 
\begin{proof}[Note:]\let\qed\relax
This formula needs to be modified in the case that $\frac{p}{q}=0$ and $\gamma_0$ is a horizontal curve (equivalently, $\epsilon(K)=0$); the actual rank is two more than what this formula predicts.  However, for our purposes, this case will not appear.
\end{proof}
\subsection{The main obstruction}

\begin{figure}
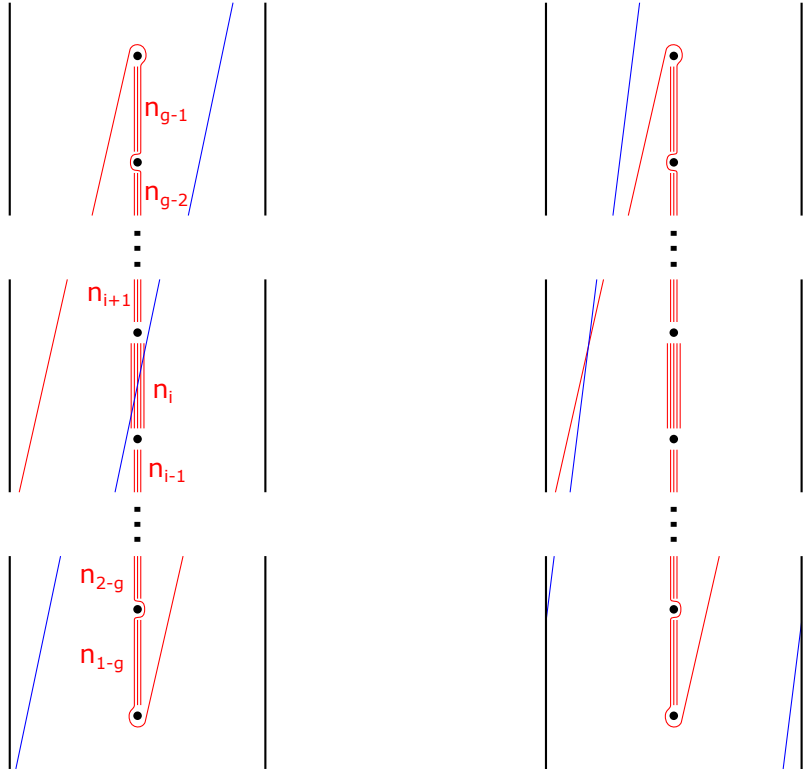

\centering
\includegraphics[width=.25\textwidth]{pos_immersed_1}
\hspace{.25\textwidth}
\includegraphics[width=.25\textwidth]{pos_immersed_2}
\caption{Schematics of the kinds of immersed curve pairings occurring in the proof of Lemma \ref{lem:pos}.}
\label{fig:pos_int}
\end{figure}

\begin{lem}\label{lem:pos}
Let $K$ be a knot with $\tau(K)=g(K)>0.$  If $\frac{p}{q}>2g(K)-1$ then $\rk\widehat{HF}(S^3_K(\frac{p}{q}),\mathfrak{s})\leq \max_i n_i(K)$ for each Spin$^c$ structure $\mathfrak{s}\in Spin^c(S^3_K(\frac{p}{q})).$
\end{lem}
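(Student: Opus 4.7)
The plan is to identify, via Theorem \ref{thm:pairS} and Remark \ref{rmk:tight}, the rank $\rk \widehat{HF}(S^3_K(p/q), \mathfrak{s})$ with the geometric intersection number $|\overline{\Gamma} \cap \overline{L^{\mathfrak{s}}}_{p/q}|$ in $\overline{T}_{\bullet}$, and then to bound this count by $\max_i n_i(K)$ through direct geometric analysis.

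First I would set up the picture: since $\tau(K)=g(K)>0$ forces $\epsilon(K)=1$ (because $\epsilon(K)=0$ would entail $\tau(K)=0$), the pulled-tight perturbed form of $\overline{\Gamma}$ consists of $\V(K)$ vertical segments distributed across levels (with $n_i(K)$ at level $i$), together with a single non-vertical segment of slope $2g(K)-1$ belonging to $\gamma_0$. The hypothesis $p/q>2g(K)-1$ ensures that $\overline{L^{\mathfrak{s}}}_{p/q}$ is strictly steeper than this non-vertical segment. Two immediate consequences follow: (a) the line meets the non-vertical segment in at most one point, since that segment has $x$-extent equal to $1$ (one wrap around the cylinder) and the line's greater slope forces the two to diverge after any crossing; and (b) the line's consecutive crossings with the meridian $\mu$ are vertically spaced by $p/q > 2g(K)-1$.

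The core of the argument will be a case analysis corresponding to the two schematics in Figure \ref{fig:pos_int}, depending on whether $\overline{L^{\mathfrak{s}}}_{p/q}$ crosses the non-vertical segment. In the first case the line avoids the non-vertical segment; by (b), and since all vertical segments lie within a height range of at most $2g(K)+1$, the line can meet $\mu$ at most twice in the vertical-segment support region, and if two such meetings do occur they must sit at ``extremal'' levels close to $\pm g(K)$. One shows that the structure of $\overline{\Gamma}$ under $\tau(K)=g(K)$, $\epsilon(K)=1$ (in which these extremal slots are taken up by the endpoints of $\gamma_0$'s non-vertical segment) prevents the two $\mu$-crossings from both contributing a nontrivial $n_i$, bounding the total by $\max_i n_i(K)$. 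In the second case the line meets the non-vertical segment exactly once; here the geometry of $\gamma_0$'s wrap constrains the remaining $\mu$-crossings of the line so that this single intersection \emph{replaces} rather than augments what would otherwise be excess vertical-segment intersections near the extremal levels, keeping the total at most $\max_i n_i(K)$.

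The main obstacle will be the detailed geometric bookkeeping in both cases: precisely locating the non-vertical segment of $\gamma_0$ within the standard lift $\overline{\Gamma}$, understanding how it interlocks with the stacks of vertical segments at nearby levels, and verifying that a line of slope just above $2g(K)-1$ cannot simultaneously pick up contributions from multiple interior levels together with the $\gamma_0$-wrap intersection. The schematics in Figure \ref{fig:pos_int} should illustrate the two typical configurations and guide the bookkeeping.
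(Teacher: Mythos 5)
Your overall strategy (reduce to a minimal intersection count via Theorem \ref{thm:pairS} and Remark \ref{rmk:tight}, then do a two-case geometric analysis) is the right one, but the quantitative setup contains an error that derails the execution. When $\tau(K)=g(K)$ and $\epsilon(K)=1$, the entire curve $\overline{\Gamma}$ --- all vertical segments \emph{and} the diagonal segment of $\gamma_0$ --- is supported within an $\epsilon$-neighborhood of the height band $[-g+\frac{1}{2},\,g-\frac{1}{2}]$, which has width $2g-1$, not $2g+1$: the curve turns around just at the punctures at heights $\pm(g-\frac{1}{2})$, so $n_i(K)=0$ for $|i|\geq g$. (Sanity check: for $T_{2,2g+1}$ the $\V(K)=2g-1$ vertical segments sit one each at levels $-(g-1),\dots,g-1$.) Consequently a line of slope $\frac{p}{q}>2g-1$, whose consecutive crossings of $\mu$ are spaced by $\frac{p}{q}$, meets $\mu$ inside that band \emph{at most once}. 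The ``two $\mu$-crossings at extremal levels'' scenario that your first case is built around never occurs, and the structural argument you invoke to dispose of it (``one shows that \dots the extremal slots are taken up by the endpoints of $\gamma_0$'s non-vertical segment'') is addressing a non-issue while itself being left unproven.

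The cleaner case division is on whether $\overline{L^{\mathfrak{s}}}_{p/q}$ meets $\mu$ inside the band, not on whether it meets the diagonal. If it does, it does so at a single level $i$, contributes exactly $n_i(K)$ intersections with vertical segments, and one checks that it then misses the diagonal segment entirely; if it does not, it misses every vertical segment and can meet the diagonal at most once (your observation (a)). Your second case instead allows the diagonal intersection to coexist with $\mu$-crossings and asserts that it ``replaces rather than augments'' the vertical contributions --- but this is precisely the point that needs proof, and without the mutual exclusivity just described a bound of the form $n_i(K)+1$ would survive and the lemma would fail. Finally, even once the second case is reduced to a single diagonal intersection, you still need $\max_i n_i(K)\geq 1$ to conclude; this follows from $n_0(K)\geq 1$, which holds because $\tau(K)>0$, and your proposal never invokes it. As written, the proposal identifies the correct picture but leaves the two claims that actually make the count close (exclusivity of the diagonal and $\mu$-band intersections, and $\max_i n_i\geq 1$) unestablished.
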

\begin{proof}
Since $\tau(K)=g(K)>0,$ we must have $\epsilon(K)=1,$ which in the immersed curve setting, means that $\gamma_0 \subset \overline{T}$ curves downward after reaching the midline $\mu$.  Hence, the slope of the non-vertical segment of $\gamma_0$ is $2\tau(K)-1=2g(K)-1.$  Let $\mathfrak{s}\in Spin^c(S^3_K(\frac{p}{q}))$ be given.  Then by Theorem \ref{thm:pairS}, $\rk\widehat{HF}(S^3_K(\frac{p}{q}),\mathfrak{s})$ is equal to the minimal intersection number between $\overline{\Gamma}$ (the lift of the immersed curve invariant of $K$) and the lift  $\overline{L^\mathfrak{s}}_{p/q}\subset \overline{T}$ of $L_{p/q}$ corresponding to $\mathfrak{s}.$ As $\frac{p}{q}>2g(K)-1,$ we see that $\overline{L^\mathfrak{s}}_{p/q}$ can intersect $\mu$ between $-g+\frac{1}{2}$ and $g-\frac{1}{2}$ at most once.  We consider the two possibilities in turn: in the case where there is such an intersection, say at height $i,$ the number of intersection points between $\overline{\Gamma}$ and $\overline{L^\mathfrak{s}}_{p/q}$ will be $n_i(K)$ within this height range. Since $\frac{p}{q}>2g(K)-1,$ we also see that there will be no intersection with the non-vertical segment of $\gamma_0.$   Moreover, since $\overline{\Gamma}$ is supported between heights of $-g+\frac{1}{2}+\epsilon$ and $g-\frac{1}{2}-\epsilon,$ there can be no further intersections; therefore, the number of intersections is at most $\max_i n_i(K)$ in this situation. See the left of Figure \ref{fig:pos_int} for an illustration of this first case.  In the second case, we consider the possibility that $\overline{L^\mathfrak{s}}_{p/q}$ does not intersect $\mu$ within the interval $[-g+\frac{1}{2},g-\frac{1}{2}].$  Here there will be no intersections with any vertical segments of $\overline{\Gamma},$ and so the only intersections that may occur are with the segment of slope $2g(K)-1.$  Since $\frac{p}{q}>2g(K)-1,$ this can happen at most once (see the right side of Figure \ref{fig:pos_int} for an illustration).  Of course, since $\tau(K)>0$ by hypothesis, $n_0(K)\geq 1$ and so we also see that the number of intersection points is no more than $\max_i n_i(K)$ in this case as well.
\end{proof}

\begin{figure}
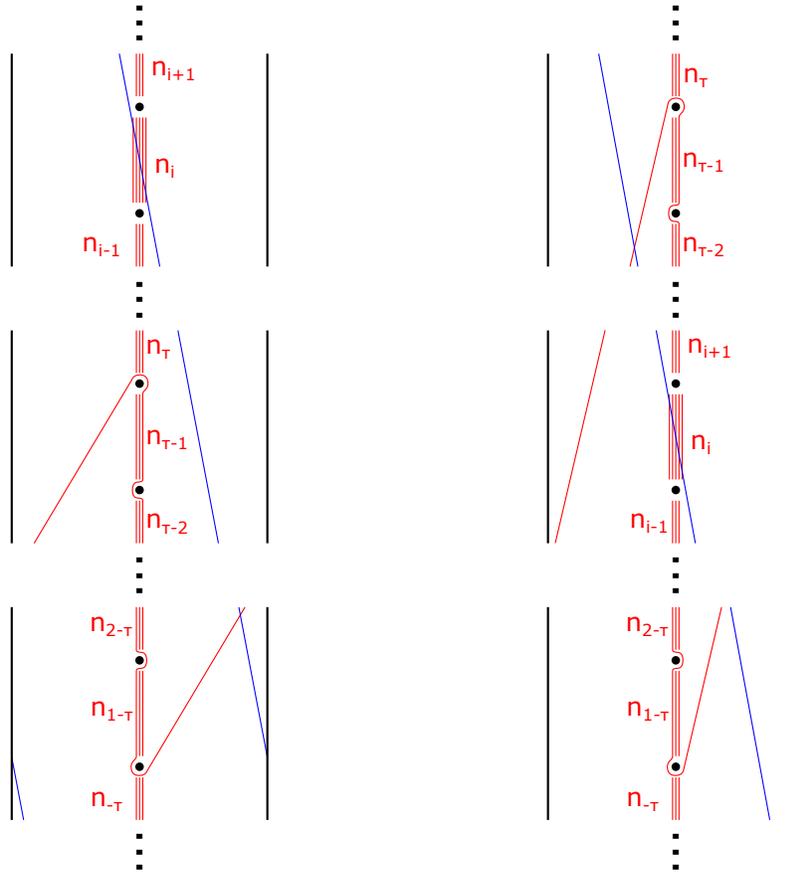

\centering
\includegraphics[width=.25\textwidth]{neg_immersed_1}
\hspace{.25\textwidth}
\includegraphics[width=.25\textwidth]{neg_immersed_2}
\caption{Schematics of the kinds of immersed curve pairings occurring in the proof of Lemma \ref{lem:neg}.}
\label{fig:int_neg}
\end{figure}

\begin{lem}\label{lem:neg}
Let $K$ be a knot with $\tau(K)>0.$  If $\frac{p}{q}<0$ then there exists an $\mathfrak{s}\in Spin^c(S^3_K(\frac{p}{q}))$ such that $\rk\widehat{HF}(S^3_K(\frac{p}{q}),\mathfrak{s})\geq \max_i n_i(K)+1.$
\end{lem}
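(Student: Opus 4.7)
My plan is to exhibit a Spin$^c$ structure $\mathfrak{s}$ whose associated lift $\overline{L^{\mathfrak{s}}}_{p/q}$ crosses a maximum-multiplicity ``stack'' of vertical segments of $\overline{\Gamma}$ and, in addition, crosses at least one lift of the non-vertical arc of $\gamma_0$; together these will produce the required $\max_i n_i(K)+1$ intersection points. The key mechanism is the sign mismatch: since $\tau(K)>0$, the slope $2\tau(K)-\epsilon(K)$ of the non-vertical arc of $\gamma_0$ is strictly positive, whereas $p/q<0$, and opposite signs force a transverse crossing somewhere inside $\overline{T}_\bullet$.

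Concretely, choose $i^*$ with $n_{i^*}(K)=\max_i n_i(K)$. By the $180^\circ$ rotational symmetry of $\overline{\Gamma}$ we have $n_i(K)=n_{-i}(K)$, so we may assume $i^*\geq 0$; moreover $\max_i n_i(K)\geq n_0(K)\geq 1$ by the same reasoning as in the proof of Lemma~\ref{lem:pos}. Select $\mathfrak{s}$ so that $\overline{L^{\mathfrak{s}}}_{p/q}$ meets $\mu$ at some height in $(i^*-\tfrac12,\,i^*+\tfrac12)$; this is possible because the collection of heights at which the $|p|$ distinct lifts of $L_{p/q}$ meet $\mu$ together fills a coset of $\tfrac{1}{|q|}\mathbb{Z}$ inside $\mu$, whose spacing $\tfrac{1}{|q|}\leq 1$ guarantees a point in every open unit interval. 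With this choice, the surgery line contributes exactly $n_{i^*}(K)=\max_i n_i(K)$ intersection points with the vertical segments concentrated near $\mu$ at level $i^*$.

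To supply the additional intersection, I would track the parallel lifts of the non-vertical arc of $\gamma_0$: these are arcs of slope $2\tau(K)-\epsilon(K)$, each spanning one period of the cylinder, arranged with integer vertical offsets. A direct linear calculation shows that, because the slope difference $(2\tau(K)-\epsilon(K))-p/q>0$, there is always a nonempty range of ``starting heights'' for which $\overline{L^{\mathfrak{s}}}_{p/q}$ meets a given such lift transversely in the interior of the corresponding fundamental domain; the two subcases mirroring Figure~\ref{fig:int_neg} correspond to whether this crossing occurs above or below the chosen $\mu$-intersection. In either subcase, the new crossing happens strictly away from $\mu$ and is therefore a genuine additional intersection, disjoint from the $n_{i^*}(K)$ already counted in the $\epsilon$-neighborhood of $\mu$.

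The main technical obstacle I anticipate is precisely this last geometric bookkeeping: verifying that, for the particular $\mathfrak{s}$ selected above, the crossing with a non-vertical lift indeed falls within the support of $\overline{\Gamma}$ rather than in a region where $\gamma_0$ has already straightened back to vertical. The hypothesis $\tau(K)>0$ is essential here, as it anchors the non-vertical arc near heights $|y|\leq \tau(K)\leq g(K)$, so a negative-slope surgery line crossing $\mu$ at a level $i^*\in\{0,1,\ldots,g(K)\}$ cannot escape the support band without meeting at least one lift of the non-vertical arc.
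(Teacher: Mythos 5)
Your overall strategy is the same as the paper's: pick a level $i^*$ realizing $\max_i n_i(K)$, choose the Spin$^c$-structure whose lift $\overline{L^{\mathfrak{s}}}_{p/q}$ crosses $\mu$ at level $i^*$ (your spacing argument for why such a lift exists is fine), count the $n_{i^*}(K)$ crossings with vertical segments, and then argue that the sign mismatch between $p/q<0$ and the positive slope $2\tau(K)-\epsilon(K)$ of the non-vertical arc of $\gamma_0$ forces one more crossing. The gap is in that last step, and you flag it yourself. Your ``nonempty range of starting heights'' observation shows that \emph{some} line of slope $p/q$ meets the non-vertical arc, but your line is already pinned down by the requirement that it pass through level $i^*$, so this gives nothing about the lift you actually chose. (Also, the picture of ``parallel lifts of the non-vertical arc arranged with integer vertical offsets'' is off: in $\overline{T}_{\bullet}$ the standard lift $\overline{\Gamma}$ contains exactly one non-vertical arc, running once around the cylinder from roughly level $-\tau(K)$ to level $\tau(K)$; its translates with varying offsets live in other covers or in the full preimage, which is not what Theorem \ref{thm:pairS} pairs against.)

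Your fallback heuristic --- that the line ``cannot escape the support band without meeting a lift of the non-vertical arc'' --- does not close the gap either. It is true that $\gamma_0$ wraps once around the cylinder and hence separates it, so the properly embedded line $\overline{L^{\mathfrak{s}}}_{p/q}$ must cross $\gamma_0$; but that forced crossing may well be one of the vertical crossings already counted at level $i^*$, since $\gamma_0$ itself contributes vertical strands there. Indeed, at any level $|i^*|<\tau(K)$ the up- and down-going strands of $\gamma_0$ through that level (vertical strands plus the one non-vertical crossing) balance, so the algebraic intersection number $\pm 1$ of the line with $\gamma_0$ is already realized by the vertical strands at level $i^*$ alone; no extra geometric intersection comes for free from homology or separation. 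The paper closes this step with an explicit two-case analysis: if $|i^*|\geq\tau(K)$, it follows the line from its crossing at level $i^*$ down to height $0$ and shows that, depending on whether it has wrapped at least halfway around the cylinder by then, it must have crossed either the positive- or negative-height half of the non-vertical segment; if $|i^*|<\tau(K)$, it locates an additional intersection between the level-$i^*$ crossing and the point where the line has wrapped halfway around. Some argument of this kind, tracing the specific chosen lift, is exactly what your proposal is missing.
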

\begin{proof}
Let $i$ be such that $n_i(K)=\max_j n_j(K).$  There exists a lift $\overline{L}\subset \overline{T}_{\bullet}$ of $L_{p/q}$ that intersects $\mu$ at level $i.$ By Theorem \ref{thm:pairS}, we choose $\mathfrak{s}$ to be the Spin$^c$ structure corresponding to this lift; that is, in the notation as above, $\overline{L}=\overline{L^\mathfrak{s}}_{p/q}.$  We consider two separate cases.  If $|i|\geq\tau(K),$ then since $\frac{p}{q}<0$ and $\tau(K)>0,$ the line $\overline{L^\mathfrak{s}}_{p/q}$ must intersect with the non-vertical segment of $\gamma_0.$  This can be seen as follows: assuming without loss of generality that $i>0,$ we follow the line $\overline{L^\mathfrak{s}}_{p/q}$ from an intersection at level $i$ until the point at which it reaches height 0.  If it has wrapped at least halfway around the cylinder $\overline{T},$ then it will have intersected with the ``positive" non-vertical segment of $\gamma_0.$  Otherwise, at the point where it wraps halfway, it will have a negative height and hence will have intersected with the ``negative" part of the non-vertical segment of $\gamma_0$ (see the left side of Figure \ref{fig:int_neg} for an illustration of this case).  Either way, along with the $n_i$ intersections with vertical segments, there is at least one additional intersection point.  In the other case we have $|i|<\tau(K).$  If at level $i$ the intersection of $\overline{L^\mathfrak{s}}_{p/q}$ with $\mu$ occurs at a positive height, then since $\frac{p}{q}<0$ and $\tau(K)>0,$ there must be an additional intersection point between the aforementioned intersection point and the point where $\overline{L^\mathfrak{s}}_{p/q}$ has wrapped halfway around the cylinder upwards and to the left (see the right side of Figure \ref{fig:int_neg} for an illustration).  An analogous argument holds if the intersection with $\mu$ occurred at a negative height.  In both cases, we see that there are at least $n_i(K)+1$ intersection points.  Since $n_i(K)$ was chosen to be maximal, the desired result follows.
\end{proof}

We now prove our main obstruction:
\main*
\begin{proof}
For the first part, if $\frac{p}{q}>2g(K)-1,$ Lemmas \ref{lem:pos} and \ref{lem:neg} imply that $S^3_K(\frac{p}{q})$ and $S^3_K(\frac{p}{q'})$ cannot be homeomorphic as there can be no isomorphism between their Heegaard Floer homologies that respects the Spin$^c$ decomposition.
As $\tau(K)=g(K)>0,$ we have a non-vertical segment of slope $2g(K)-1.$ From the rank formula \eqref{eq:rk}, we have that:
\[
|p-q(2g(K)-1)|+\V(K)q = p-q'(2g(K)-1)-\V(K)q'.
\]
As $\frac{p}{q}\leq 2g(K)-1,$ this becomes:
\begin{align*}
q(2g(K)-1)-p+\V(K)q &= p-q'(2g(K)-1)-\V(K)q' \\
(q+q')(2g(K)-1+\V(K)) &= 2p
\end{align*}

\end{proof}

\subsection{Combining other obstructions}\label{sec:combo}
Here we briefly recall some facts about finite type invariants (also called Vassiliev invariants) for knots.  Suppose a real-valued knot invariant $v$ can be extended to an invariant of \textit{singular knots} (i.e., knots with possibly finitely many points of self-intersection) in a way that satisfies the following:
\[
v(K_{\bullet}) = v(K_+)-v(K_-)
\]
whenever the (singular) knots $K_+$, $K_-$, and $K_{\bullet}$ differ locally near a crossing/self-intersection as in Figure \ref{fig:skein}.  Then $v$ is said to be a finite type invariant of order $n$ if moreover $v(K)=0$ whenever $K$ has at least $n+1$ self-intersection points.

\begin{figure}
\centering
\includegraphics[width=.75\textwidth]{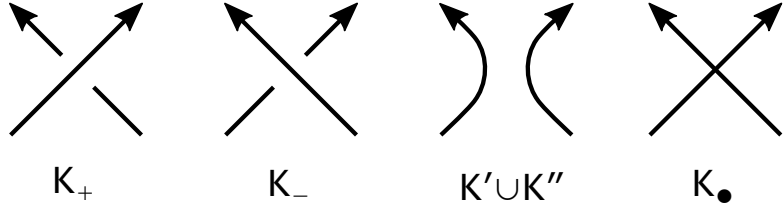}
\caption{The diagrams of $K_+$, $K_-$, $K'\cup K''$, and $K_{\bullet}$ differ only locally near a crossing.}
\label{fig:skein}
\end{figure}

For example, recall the Conway polynomial of a knot, which is related to the Alexander polynomial in the following way:
\[
\nabla_K(z) = \Delta_K(t)|_{z=t^{1/2}-t^{-1/2}}
\]
For a knot $K$, the Conway polynomial will have the form:
\[
\nabla_K(z) = 1 + \sum_{j=1}^{n}a_{2j}(K)z^{2j}
\]
It is a fact that the coefficent $a_{2j}(K)$ is a finite type invariant of order $2j$ for each $j$.  For instance, $a_2(K) = \frac{1}{2}\Delta_K''(1).$

The other finite type invariant that will be of interest to us is $v_3$.  This is a third-order invariant, which may be defined as:
\[
v_3(K) = -\frac{1}{36}V'''_{K}(1) -\frac{1}{12}V''_{K}(1),
\]
where $V_K(t)$ is the Jones polynomial of $K$.

It is known that the Conway polynomial satisfies the following skein relation:
\begin{equation}\label{eq:conwaySkein}
\nabla_{K_+}(z)-\nabla_{K_-}(z)=z\nabla_{K'\cup K''}(z).
\end{equation}

The relevance of these finite type invariants to cosmetic surgery is seen in the following result obtained by Ito using the degree 2 part of the LMO invariant:
\begin{thm}[Corollary 1.3 of \cite{Ito}]\label{thm:ft}
Let $K$ be a knot and suppose $S^3_{p/q}(K) \cong \pm S^3_{p/q'}(K)$ for some $q\neq q'.$ Then,
\[
2pv_3(K) = (7a_2(K)^2 - a_2(K)-10a_4(K))(q+q')
\]
\end{thm}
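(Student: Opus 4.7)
The plan is to use the degree-2 part of the LMO (Le--Murakami--Ohtsuki) invariant of rational homology 3-spheres, which I will denote $\lambda_2$. This invariant transforms in a controlled way under orientation reversal (by a fixed sign determined by the parity of its degree), so the hypothesis $S^3_{p/q}(K) \cong \pm S^3_{p/q'}(K)$ collapses to a single scalar equation
\[
\lambda_2(S^3_{p/q}(K)) = \pm \lambda_2(S^3_{p/q'}(K)),
\]
and the entire proof reduces to unpacking this equality.

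The main technical step is deriving an explicit Dehn-surgery formula that expresses $\lambda_2(S^3_{p/q}(K))$ in terms of $p$, $q$, and finite-type invariants of $K$. The natural approach is via the Aarhus / Kontsevich-integral machinery: one truncates the Kontsevich integral of $K$ to total degree 2, rescales by the framing $p/q$, and performs the formal Gaussian integration prescribed by the LMO construction. Evaluating the weight systems on the contributing trivalent Jacobi diagrams (theta, dumbbell, tetrahedron, and so on) and identifying their values with classical knot invariants produces an expansion of the shape
\[
\lambda_2(S^3_{p/q}(K)) = F(p,q) + G(p,q) a_2(K) + H(p,q)\bigl(7a_2(K)^2 - a_2(K) - 10 a_4(K)\bigr) + J(p,q) v_3(K),
\]
where $F,G,H,J$ are explicit rational functions of $p,q$ and $F$ is the $K$-independent contribution $\lambda_2(L(p,q))$ of the background lens space.

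Substituting this expansion into the scalar equation above and invoking the Dedekind-sum reciprocity identities satisfied by $\lambda_2$ of lens spaces, the $F$- and $G$-contributions cancel out of the difference. The surviving $v_3$ and $(7a_2^2-a_2-10a_4)$ coefficients then simplify, after clearing denominators, to the advertised identity
\[
2p v_3(K) = \bigl(7a_2(K)^2 - a_2(K) - 10 a_4(K)\bigr)(q+q').
\]

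The principal obstacle is executing the degree-2 Gaussian integration concretely: one must enumerate every contributing trivalent Jacobi diagram, compute its weight-system value, and verify that the resulting quadratic combination of $a_2$ and $a_4$ assembles into precisely $7a_2^2 - a_2 - 10 a_4$ rather than some other polynomial in $a_2$ and $a_4$. Equally delicate is the lens-space cancellation, which relies on the specific reciprocity identities for the Dedekind sums arising in the surgery formula; without those, a $K$-independent residue in $p, q, q'$ would contaminate the right-hand side and destroy the clean factor $(q+q')$.
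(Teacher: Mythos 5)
This statement is not proved in the paper at all: it is imported verbatim as Corollary 1.3 of \cite{Ito}, with the paper only remarking that Ito obtains it ``using the degree 2 part of the LMO invariant.'' Your outline correctly identifies that method, so at the level of strategy you are pointing at the right machinery. But as a proof it has a genuine gap: every substantive step is deferred. You do not derive the surgery formula for $\lambda_2(S^3_{p/q}(K))$, you do not enumerate the contributing Jacobi diagrams or evaluate their weight systems, and you do not verify that the knot-dependent part assembles into exactly $7a_2(K)^2 - a_2(K) - 10a_4(K)$ with the coefficients $2p$ and $(q+q')$ as claimed --- you explicitly flag all of these as ``the principal obstacle.'' A plan that names its own unexecuted hard parts is not yet an argument; someone reading only your text could not distinguish the stated identity from, say, one with $5a_4$ in place of $10a_4$.

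Two smaller points. First, your claim that the $K$-independent and $a_2$-linear contributions ``cancel out of the difference'' via Dedekind-sum reciprocity is asserted, not shown, and it cannot be entirely right as stated: a term linear in $a_2(K)$ visibly survives into the final identity (the $-a_2(K)$ inside the parenthesis), so the cancellation you need is more delicate than a clean disappearance of the $G$-term. Second, you should make the orientation-reversal sign explicit: the degree-$2$ part of the LMO invariant satisfies $\lambda_2(-M) = \lambda_2(M)$, which is why the single equation you write covers both the $+$ and $-$ cases of the hypothesis; leaving the sign as ``$\pm$'' obscures why the same conclusion holds for purely and chirally cosmetic pairs alike. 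If your intent was simply to cite Ito's result, that matches what the paper does and is legitimate; if the intent was to reprove it, the computation at the heart of the matter is entirely missing.
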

\begin{rmk}
Ito uses a slightly different definition for $v_3$ than is used here; in particular, he normalizes it to take the value $\frac{1}{4}$ on the right-handed trefoil instead of 1 as in our convention.
\end{rmk}

Combining Theorems \ref{thm:main} and \ref{thm:ft} we find the following obstruction to the existence of chirally cosmetic surgeries with opposite signs:
\begin{cor}\label{cor:combo}
Let $K$ be a knot satisfying $\tau(K)=g(K)$ and $v_3(K)\neq0$.  If $S^3_{p/q}(K) \cong - S^3_{p/q'}(K)$ for some $p,q>0,q'<0$ then
\[
v_3(K)(\V(K)+2g(K)-1) = 7a_2(K)^2 - a_2(K)-10a_4(K).
\]
\end{cor}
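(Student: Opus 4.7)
The plan is to combine the two displayed identities coming from Theorems \ref{thm:main} and \ref{thm:ft} and eliminate the slope parameters $p,q,q'$. Since the hypotheses of the corollary include $\tau(K)=g(K)$ (and $v_3(K)\ne 0$ forces $K$ to be nontrivial, hence $g(K)>0$), Theorem \ref{thm:main} applies directly and yields
\[
2p \;=\; \bigl(\V(K)+2g(K)-1\bigr)(q+q').
\]
Similarly, since $S^3_{p/q}(K)\cong -S^3_{p/q'}(K)$ with $q\ne q'$, Ito's finite-type obstruction (Theorem \ref{thm:ft}) gives
\[
2p\,v_3(K) \;=\; \bigl(7a_2(K)^2-a_2(K)-10a_4(K)\bigr)(q+q').
\]

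Next I would verify that the common factor $q+q'$ is nonzero, so that both equations can be meaningfully related. Indeed, the first identity would otherwise force $2p=0$, contradicting $p>0$. Dividing the second identity by the first (or, equivalently, substituting $2p$ from the first into the second and cancelling the nonzero factor $q+q'$) yields
\[
v_3(K)\bigl(\V(K)+2g(K)-1\bigr) \;=\; 7a_2(K)^2 - a_2(K)-10a_4(K),
\]
which is the desired conclusion.

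There is no genuine obstacle here: the corollary is a pure algebraic consequence of two previously established theorems, and the only nuance is checking that the common factor $q+q'$ does not vanish so that the two expressions for $2p$ can legitimately be equated. The hypothesis $p>0$ handles this, so the proof reduces to a one-line substitution after invoking Theorems \ref{thm:main} and \ref{thm:ft}.
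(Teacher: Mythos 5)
Your proof is correct and follows essentially the same route as the paper: substitute the expression for $2p$ from Theorem \ref{thm:main} into the identity of Theorem \ref{thm:ft} and cancel the factor $q+q'$, which is nonzero because $2p=(\V(K)+2g(K)-1)(q+q')$ and $p>0$. Your added remark that $v_3(K)\neq 0$ forces $K$ to be nontrivial, hence $g(K)>0$ so that Theorem \ref{thm:main} genuinely applies, is a small point the paper leaves implicit and is worth making.
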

\begin{proof}
Substition gives:
\[
v_3(K)(q+q')(2g(K)-1+\V(K)) = (7a_2(K)^2 - a_2(K)-10a_4(K))(q+q').
\] 
Notice that Theorem \ref{thm:main} implies that $q+q' \neq 0$ in this setting, and so the result follows.
\end{proof}

Note that if $v_3(K)\neq 0,$ the obstruction above may be written as:
\[
\V(K)+2g(K)-1 = \frac{7a_2(K)^2 - a_2(K)-10a_4(K)}{v_3(K)}.
\]
The quantity on the left-hand side is Heegaard-Floer theoretic, while that on the right-hand side is a combination of finite type invariants, so for a generic knot, one would not expect these two to coincide.  Hence, this corollary provides some reason to believe in a negative answer to Question \ref{ques:opp}; the conditions $\tau(K)=g(K)$ and $v_3(K)\neq 0$ are both sufficient (although certainly not necessary) for $K$ to be non-amphichiral.  Morally, one might interpret this as suggesting that knots which are ``very far" from being amphichiral will not admit chirally cosmetic surgeries along slopes of opposite signs. 

\section{Applications}

\subsection{Pretzel knots}

The goal of this section is to prove:
\pretz*

Here we use $s_{n,m}$ as a shorthand for $s_n(k_1,\dots,k_m),$ the $n$th elementary symmetric polynomial in $k_1,\dots,k_m$ given by:
\[
s_{n,m}=\sum_{\substack{P\subset\{1,\dots,m\} \\ |P|=n}} \prod_{j\in P} k_j
\]
Let us here record some properties of the elementary symmetric polynomials, which are straightforward consequences of their definition.
\begin{lem}\label{lem:sym}
Let $s_{n,m}$ denote the $n$th elementary symmetric polynomial in $k_1,\dots,k_m$.  Then, the following hold:
\begin{itemize}
\item If $k_j=0$ for all $j>N$, then whenever $n\geq N$, $s_{n,m} = s_{n,N}$
\item $s_{n,m+1} = s_{n,m} + k_{m+1}s_{n-1,m}$
\end{itemize}
\end{lem}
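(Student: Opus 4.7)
The plan is to prove both parts directly from the definition of the elementary symmetric polynomial by partitioning the index set of the defining sum. There are no deep steps here; both claims follow by a brief combinatorial bookkeeping argument, and I would not expect any genuine obstacle.

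For the first bullet, I would split the sum
\[
s_{n,m} = \sum_{\substack{P\subset\{1,\dots,m\} \\ |P|=n}} \prod_{j\in P} k_j
\]
according to whether the index set $P$ is contained in $\{1,\dots,N\}$ or not. If $P\not\subset\{1,\dots,N\}$, then $P$ contains some index $j>N$, so by hypothesis $k_j=0$ and the corresponding product vanishes. The surviving terms are exactly those indexed by $n$-subsets of $\{1,\dots,N\}$, and their contribution is precisely $s_{n,N}$. (When $n>N$, both sides vanish; when $n=N$, both sides reduce to $k_1 k_2 \cdots k_N$, so the statement is trivially true in the range $n\geq N$ highlighted by the lemma.)

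For the second bullet, I would partition the $n$-subsets of $\{1,\dots,m+1\}$ according to whether they contain the element $m+1$. The $n$-subsets that do not contain $m+1$ are precisely the $n$-subsets of $\{1,\dots,m\}$, and these contribute $s_{n,m}$. The $n$-subsets that do contain $m+1$ correspond bijectively (by removing $m+1$) to the $(n-1)$-subsets of $\{1,\dots,m\}$; each such subset $P$ contributes $k_{m+1}\prod_{j\in P\setminus\{m+1\}} k_j$, and summing these yields $k_{m+1}\,s_{n-1,m}$. Adding the two contributions gives the stated recursion.

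In sum, the whole proof amounts to reading off the definition twice. The only stylistic choice is whether to handle the degenerate ranges (such as $n>N$ or $n>m$, where the polynomial is zero by the empty-sum convention) explicitly or to leave them as trivial cases. I would mention them in a single sentence for completeness.
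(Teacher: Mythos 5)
Your proof is correct and is exactly the straightforward definition-chasing argument the paper has in mind; the paper in fact states this lemma without proof, remarking only that both items are ``straightforward consequences of their definition.'' One small bonus of your argument for the first bullet is that it never actually uses the hypothesis $n\geq N$, so you have proved the slightly stronger statement that $s_{n,m}=s_{n,N}$ for all $n$ once $k_j=0$ for $j>N$.
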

\begin{rmk}
By convention, we take $s_{0,m}=1$, and if $n>m$, $s_{n.m}=0$.
\end{rmk}

In the general case where $K=K(k_1,\dots,k_{2g+1})$, we will need the following formulae for $a_2(K)$ and $v_3(K)$.
\begin{lem}[Lemma 2.2 of \cite{CCP}]\label{lem:av}
Let $K=K(k_1,\dots,k_{2g+1})$.  Then
\begin{align*}
a_2(K)&=\frac{1}{2}g(g+1) + g s_{1,2g+1} + s_{2,2g+1}
\\
v_3(K)&=\frac{1}{2}\left(\frac{g(g+1)(2g+1)}{3} + g(2g+1)s_{1,2g+1} + g s_{1,2g+1}^2 + 2g s_{2,2g+1} + s_{1,2g+1}s_{2,2g+1} + s_{3,2g+1} \right)
\end{align*}
\end{lem}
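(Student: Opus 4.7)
The strategy is to combine a symmetry and degree argument with polynomial interpolation. Since $a_2$ is a finite type invariant of order 2 and $v_3$ is of order 3, general principles from the theory of Vassiliev invariants imply that on any twist family of knots these invariants are polynomial of degree at most 2 and 3 respectively in the integer twist parameters. In particular, $a_2(K(k_1,\dots,k_{2g+1}))$ and $v_3(K(k_1,\dots,k_{2g+1}))$ are polynomials in $k_1,\dots,k_{2g+1}$ of total degree at most 2 and 3 respectively. Moreover, the pretzel knot $P(p_1,\dots,p_n)$ is isotopic to every reordering of its tangles, so both invariants are symmetric polynomials in $k_1,\dots,k_{2g+1}$.

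Expressing these symmetric polynomials in the elementary symmetric basis, and abbreviating $s_j := s_{j,2g+1}$, we obtain
\begin{align*}
a_2 &= A_0 + A_1 s_1 + A_2 s_2 + A_{11} s_1^2, \\
v_3 &= V_0 + V_1 s_1 + V_{11} s_1^2 + V_{111} s_1^3 + V_2 s_2 + V_{12} s_1 s_2 + V_3 s_3,
\end{align*}
for some coefficient functions depending only on $g$. The task reduces to pinning these down.

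To do so, I would evaluate $a_2$ and $v_3$ on enough small specializations of the parameters. The baseline case $K(0,\dots,0)$ is the $(2,2g+1)$-torus knot, whose Conway and Jones polynomials are classical. Families such as $K(k,0,\dots,0)$, $K(k_1,k_2,0,\dots,0)$, and $K(k_1,k_2,k_3,0,\dots,0)$ can then be handled by repeated application of the Conway skein relation \eqref{eq:conwaySkein} (and the analogous Kauffman bracket skein relation for the Jones polynomial) at each nontrivial twist box, since each smoothing produces either a simpler pretzel knot or a two-component pretzel link whose Conway or Jones polynomial can be computed by the same recursion. Reading off $a_2$ as the $z^2$ coefficient of $\nabla$ and $v_3$ as $-\tfrac{1}{36}V'''(1) - \tfrac{1}{12}V''(1)$, each specialization produces a linear equation in the unknown coefficients, and a handful of them suffices to determine all of the $A_*$ and $V_*$.

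The main obstacle is the $v_3$ computation: extracting the third derivative of the Jones polynomial at $t=1$ recursively is bookkeeping-heavy, and one must be careful about normalization conventions (compare the remark following Theorem~\ref{thm:ft}). Once the polynomial expressions above are fixed, however, the claimed identity is between two explicit polynomials of bounded degree in $(k_1,\dots,k_{2g+1})$, so verifying it on a small number of tuples reduces the whole problem to a finite explicit calculation, as carried out in \cite{CCP}.
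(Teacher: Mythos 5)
The paper does not actually prove this lemma; it is imported verbatim from Lemma 2.2 of \cite{CCP}, so there is no internal argument to compare against. Your overall strategy --- bound the degree using the finite-type property, reduce to the elementary symmetric basis by symmetry, and pin down the coefficients by skein-theoretic evaluation on specializations with few nonzero twist parameters --- is a legitimate route, and it is close in spirit to computations the paper itself carries out (Lemma \ref{lem:Ja4} is exactly the one-nonzero-parameter instance of your plan, for $a_4$). The degree bounds are correct: changing $k_i$ to $k_i+1$ is a single crossing change, so the $(n+1)$-fold mixed finite differences of an order-$n$ invariant vanish and $a_2$, $v_3$ are polynomials of total degree at most $2$ and $3$ in $(k_1,\dots,k_{2g+1})$; and, granted symmetry, specializations with at most three nonzero parameters do determine all coefficients uniformly in $g$, since $s_{j,2g+1}$ restricts to $s_{j,3}$ there.

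The genuine gap is your justification of the symmetry. A pretzel knot is \emph{not} in general isotopic to every reordering of its tangles: by the classification of Montesinos links, for five or more tangles the cyclic order of the twist regions up to reversal is an isotopy invariant, so only dihedral permutations are realized by isotopies. Without full $\mathfrak{S}_{2g+1}$-invariance, the reduction to the elementary symmetric basis --- and with it the claim that a handful of low-complexity specializations determines everything --- collapses. The conclusion you need is nevertheless true, but for a different reason: the Kauffman bracket and the Conway polynomial of a pretzel diagram are computed by resolving each integer tangle inside the two-strand skein module (equivalently, the commutative algebra $TL_2$), and the closure of the diagram depends only on the multiset of resolutions chosen; hence $V_K$ and $\nabla_K$, and therefore $a_2$ and $v_3$, are symmetric in the $k_i$ even when the underlying knots are not isotopic. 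With that repair, and granting the admittedly heavy bookkeeping of extracting $V_K''(1)$ and $V_K'''(1)$ recursively, your outline does reduce the lemma to a finite explicit verification.
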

\begin{rmk}
The formula for $v_3$ here has the opposite sign from that of \cite{CCP} because here we are considering the mirror image of the knots considered in that paper.  Of course, the Alexander polynomial is unchanged under mirroring, and so $a_2$ is the same as well.
\end{rmk}

\begin{figure}
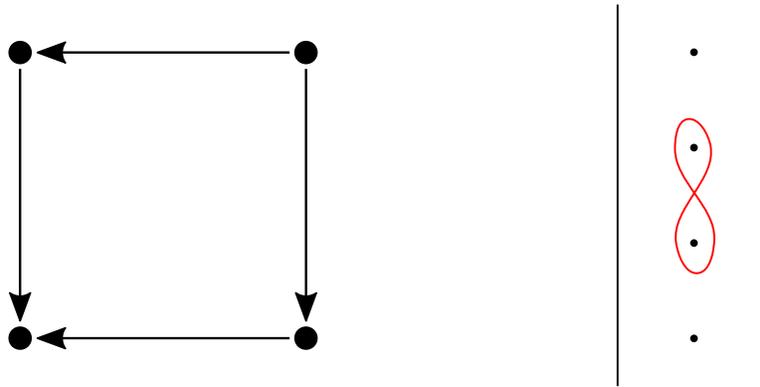
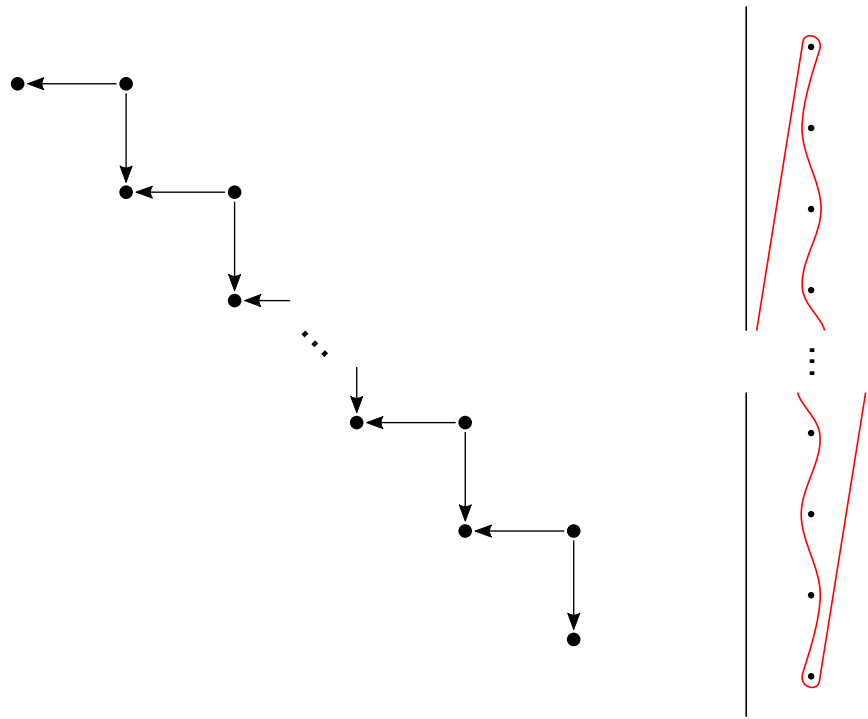

\centering
\begin{subfigure}{.75\textwidth}
\centering
\raisebox{-.5\height}{
\includegraphics[width=.4\textwidth]{alt_cpx_2}}
\hfill
\raisebox{-.5\height}{
\includegraphics[width=.2\textwidth]{alt_immersed_2}}
\caption{A $1\times 1$ square in the Knot Floer complex (left) and its corresponding immersed curve invariant (right), a simple figure-eight curve.}
\end{subfigure}
\\
\begin{subfigure}{.85\textwidth}
\centering
\raisebox{-.5\height}{
\includegraphics[width=.65\textwidth]{alt_cpx_1}}
\hfill
\raisebox{-.5\height}{
\includegraphics[width=.15\textwidth]{alt_immersed_1}}
\caption{A ``staircase" subcomplex with length 1 arrows (left) and the corresponding immersed curve invariant (right); this is the $\tau(K)>0$ case.}
\end{subfigure}
\caption{The two kinds of Knot Floer subcomplexes a thin knot may have along with their corresponding immersed curve invariants.}
\label{fig:alt_cpx}
\end{figure}

We wish to compute $\V(K)$ for this family of knots.  Since these knots are alternating (and hence homologically thin), it is known that their (reduced) Knot Floer complex may be put in a special form.  In particular, there exists a reduced basis such that the only ``arrows" are of length one, so that the complex consists of $1\times 1$ ``squares" and one ``staircase" subcomplex; see Lemma 7 of \cite{Pet}.  These correspond, in the immersed curve formulation, to simple (i.e., height 1) figure-eight curves and a $\gamma_0$ that ``weaves" through adjacent punctures, respectively (see Figure \ref{fig:alt_cpx}).  Each figure-eight component contributes 4 to the rank of $\widehat{HFK}(K)$ and 2 to $\V(K).$ On the other hand, $\gamma_0$ for this knot will have height equal to $\tau(K).$ Hence, if we put $F$ equal to the number of simple figure-eight components, we have that $\rk\widehat{HFK}(K)=4F+2\tau(K)+1$ and $\V(K)=2F+2\tau(K)-1$ (this assumes $\tau(K)>0$).  So we find that:
\[
\V(K)=\frac{1}{2}\rk\widehat{HFK}(K)+\tau(K)-\frac{3}{2}.
\]
Furthermore, since $K$ is thin for each Alexander grading $s,$ $\widehat{HFK}(K,s)$ is supported in Maslov grading $\mu=s+\delta$ for some fixed $\delta$ which depends only on the knot $K.$ Hence, we see by \eqref{eq:euler} that
\begin{align*}
\Delta_K(-1)&=\sum_{\mu,s}(-1)^{\mu}(-1)^s \rk\widehat{HFK}(K,s)\\
&= \sum_{s}(-1)^{2s+\delta}\rk\widehat{HFK}(K,s)\\
&=\pm \sum\rk\widehat{HFK}(K,s)\\
&=\pm \rk\widehat{HFK}(K).
\end{align*}
Therefore, $\rk\widehat{HFK}(K)=|\Delta_K(-1)|.$  This latter number is an invariant also known as the \textit{determinant} of the knot $K$ (denoted $\det(K)$).  It follows from the definition that $\det(K)=|\det(A+A^T)|$ whenever $A$ is any matrix representing the Seifert form of $K.$  We thus have shown:
\begin{lem}\label{lem:vert}
If $K$ is an alternating (or more generally, a thin) knot with $\tau(K)>0,$ then
\[
\V(K)=\frac{1}{2}\det(K)+\tau(K)-\frac{3}{2}.
\]
\end{lem}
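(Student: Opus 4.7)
The plan is to follow the decomposition of the knot Floer complex for thin knots and translate it into counts of immersed curve data. By Lemma 7 of \cite{Pet}, when $K$ is thin there is a reduced basis in which $\widehat{CFK}(K)$ splits as a direct sum of $F$ many $1\times 1$ square subcomplexes together with a single staircase subcomplex whose arrows all have length one. Under the dictionary between knot Floer complexes and immersed multicurves in $\overline{T}_{\bullet}$, each square corresponds to a simple figure-eight component of height one, while the staircase produces the distinguished curve $\gamma_0$ weaving through adjacent punctures. Since $\tau(K)>0$, the first vertical excursion of $\gamma_0$ reaches height $\tau(K)$, and the assumption $\epsilon(K)=\pm 1$ (forced by the length-one staircase) guarantees that $\gamma_0$ has exactly $2\tau(K)+1$ intersections with $\mu$ and $2\tau(K)-1$ vertical segments in its pulled-tight representative.

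Next I would carry out the bookkeeping. Each figure-eight component contributes $4$ to $\rk\widehat{HFK}(K)$ and $2$ vertical segments, while $\gamma_0$ contributes $2\tau(K)+1$ and $2\tau(K)-1$ respectively. Hence
\begin{align*}
\rk\widehat{HFK}(K) &= 4F + 2\tau(K) + 1,\\
\V(K) &= 2F + 2\tau(K) - 1.
\end{align*}
Solving the first equation for $F$ and substituting into the second yields
\[
\V(K) = \tfrac{1}{2}\rk\widehat{HFK}(K) + \tau(K) - \tfrac{3}{2}.
\]

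The remaining step is to identify $\rk\widehat{HFK}(K)$ with $\det(K)$. For a thin knot, $\widehat{HFK}(K,s)$ is supported in a single Maslov grading $\mu=s+\delta$ for a fixed $\delta$ depending only on $K$, so the Euler characteristic formula \eqref{eq:euler} evaluated at $t=-1$ collapses:
\[
\Delta_K(-1) = \sum_s (-1)^{s+\delta}(-1)^s \rk\widehat{HFK}(K,s) = \pm\, \rk\widehat{HFK}(K).
\]
Since $\det(K)=|\Delta_K(-1)|$ by the classical identity $\det(K) = |\det(A+A^T)|$ for any Seifert matrix $A$, this gives $\rk\widehat{HFK}(K) = \det(K)$, and combining with the previous display yields the claimed formula.

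The only genuinely subtle point is correctly enumerating the vertical segments of the staircase component $\gamma_0$: one must check that the ``pulled tight'' realization in Figure \ref{fig:alt_cpx}(b) has precisely $2\tau(K)-1$ vertical strands rather than $2\tau(K)$ or $2\tau(K)+1$. This is where the hypothesis $\tau(K)>0$ enters (and is needed to exclude the degenerate horizontal case $\epsilon(K)=0$, where the formula would need adjustment). Once this count is verified, the rest of the argument is purely algebraic manipulation plus the standard determinant identification for thin knots.
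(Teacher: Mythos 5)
Your proposal is correct and follows essentially the same route as the paper: the square-plus-staircase decomposition from Lemma 7 of \cite{Pet}, the counts $\rk\widehat{HFK}(K)=4F+2\tau(K)+1$ and $\V(K)=2F+2\tau(K)-1$, and the identification $\rk\widehat{HFK}(K)=\det(K)$ via thinness and the Euler characteristic formula \eqref{eq:euler}. The only difference is expository: you flag the vertical-segment count for $\gamma_0$ as the delicate step, which the paper asserts without further comment.
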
\qed
\begin{rmk}
The case where $\tau(K)<0$ may be obtained from this by considering the mirror image of $K.$  The case $\tau(K)=0$ may also be deduced from the above methods to obtain: $\V(K) = \frac{1}{2}(\det(K)-1).$
\end{rmk}

In light of this, we now turn to the computation of the determinant of the knot $K=K(k_1,\dots,k_{2g+1}).$  Applying Seifert's algorithm to a diagram such as the one in Figure \ref{fig:Pknot}, one finds that the Seifert form for $K$ may be given by the matrix $-A_{2g},$
where $A_n$ is the $n\times n$ matrix of the form:
\[
(A_n)_{ij} = \begin{cases}
k_i+k_{i+1}+1 & i=j \\
k_j & j=i+1 \\
k_i+1 & i=j+1 \\
0 & \text{otherwise}
\end{cases}
\]
We wish to compute $|\det(-A_{2g}-A_{2g}^T)|=\det(A_{2g}+A_{2g}^T).$  For the sake of notation, we put $Q_n = A_n+A_n^T$ and $p_i=2k_i+1$ for each $i.$  With this notation, we find that:
\[
(Q_n)_{ij}=\begin{cases}
p_i+p_{i+1} & i=j\\
p_i & i=j+1\\
p_j & j=i+1\\
0 & \text{otherwise}
\end{cases}
\]
\textbf{Claim:} $\det(Q_n)=s_{n,n+1}(p_1,\dots,p_{n+1}).$
\begin{proof}
This is seen by (strong) induction: the case $n=1$ is clear (for $n=0$ we interpret this as the empty product which gives a value of 1).  For $n\geq 1,$ we see by expanding along the last row and applying Lemma \ref{lem:sym} that:
\begin{align*}
\det(Q_n) &= (p_n+p_{n+1})\det(Q_{n-1})-p_n^2\det(Q_{n-2})\\
&=p_{n+1} s_{n-1,n}(p_1,\dots,p_n)+p_n s_{n-1,n}(p_1,\dots,p_n)-p_n(p_n s_{n-2,n-1}(p_1,\dots,p_{n-1})\\
&=p_{n+1} s_{n-1,n}(p_1,\dots,p_n) + p_n(s_{n-1,n}(p_1,\dots,p_n)-p_n s_{n-2,n-1}(p_1,\dots,p_{n-1}))\\
&=p_{n+1} s_{n-1,n}(p_1,\dots,p_n) + p_n s_{n-1,n-1}(p_1,\dots,p_{n-1})\\
&=p_{n+1} s_{n-1,n}(p_1,\dots,p_n) + s_{n,n}(p_1,\dots,p_n)\\
&=s_{n,n+1}(p_1,\dots,p_{n+1})
\end{align*}
\end{proof}
Now, substituting $2k_i+1$ for $p_i$ we observe that:
\begin{align*}
s_{n,n+1}(2k_1+1,\dots,2k_{n+1}+1) &= \sum_{i=1}^{n+1}\prod_{j\neq i}(2k_j+1)\\
&=\sum_{i=1}^{n+1}\sum_{m=0}^n 2^m s_{m,n}(k_1,\dots,k_{i-1},k_{i+1},\dots,k_{n+1})\\
&=\sum_{m=0}^n 2^m \sum_{i=1}^{n+1} s_{m,n}(k_1,\dots,k_{i-1},k_{i+1},\dots,k_{n+1})\\
&=\sum_{m=0}^n 2^m (n+1-m)s_{m,n+1}.
\end{align*}
We thus have the following:
\begin{lem}\label{lem:Pdet}
Let $K=K(k_1,\dots,k_{2g+1}).$ Then
\[
\det(K)=\sum_{m=0}^{2g} 2^m (2g+1-m)s_{m,2g+1}.
\]
\end{lem}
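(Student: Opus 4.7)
The proof proposal is essentially to assemble the pieces already set up in the paragraphs preceding the lemma statement. By Lemma \ref{lem:vert} (and the discussion above it), the determinant $\det(K)$ equals $|\det(A + A^T)|$ for any Seifert matrix $A$ of $K$; since the Seifert form of $K = K(k_1,\dots,k_{2g+1})$ is represented by $-A_{2g}$, we have $\det(K) = \det(Q_{2g})$ with $Q_n = A_n + A_n^T$ as above.

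The claim proved just above the lemma statement gives $\det(Q_n) = s_{n,n+1}(p_1,\dots,p_{n+1})$, where $p_i = 2k_i + 1$. Setting $n = 2g$, this yields
\[
\det(K) = s_{2g,2g+1}(2k_1+1,\dots,2k_{2g+1}+1).
\]
The subsequent computation in the excerpt, which expands this elementary symmetric polynomial at shifted arguments using the binomial-style expansion of $\prod_{j \neq i}(2k_j + 1)$ followed by symmetrizing over $i$, evaluates the right-hand side as $\sum_{m=0}^{2g} 2^m(2g+1-m) s_{m,2g+1}$. Putting these two identifications together gives the lemma.

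The plan is thus simply to state that $\det(K) = \det(Q_{2g})$ by the Seifert matrix description, invoke the claim to rewrite this as $s_{2g,2g+1}(p_1,\dots,p_{2g+1})$, and conclude by the identity for $s_{n,n+1}(2k_1+1,\dots,2k_{n+1}+1)$ with $n = 2g$. There is no real obstacle — the substantive work (the inductive computation of $\det(Q_n)$ and the expansion into the $s_{m,n+1}$ basis) has been done in the lines immediately preceding the lemma. The only thing that needs to be emphasized is that the sign $|\det(-A_{2g}-A_{2g}^T)| = \det(Q_{2g})$ is automatic because all entries $p_i = 2k_i+1$ are positive, so $Q_{2g}$ is positive definite and the absolute value can be dropped.
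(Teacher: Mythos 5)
Your proposal is correct and matches the paper exactly: the lemma is stated with an immediate \qed precisely because its proof is the preceding chain of identities (Seifert matrix $-A_{2g}$, the inductive claim $\det(Q_n)=s_{n,n+1}(p_1,\dots,p_{n+1})$, and the expansion of $s_{n,n+1}(2k_1+1,\dots,2k_{n+1}+1)$ into the $s_{m,n+1}$ basis), assembled just as you describe. Your added remark that the absolute value can be dropped is a harmless clarification; the quickest justification is that $s_{2g,2g+1}(p_1,\dots,p_{2g+1})$ is manifestly positive since each $p_i=2k_i+1>0$.
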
\qed

\begin{cor}\label{cor:Pvert}
Let $K=K(k_1,\dots,k_{2g+1}).$ Then
\[
\V(K)=g-\frac{3}{2}+\frac{1}{2}\sum_{m=0}^{2g} 2^m (2g+1-m)s_{m,2g+1}
\]
\end{cor}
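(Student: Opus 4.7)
The plan is to assemble the corollary from the two preceding lemmas. Since $K = K(k_1, \ldots, k_{2g+1})$ is an alternating knot, it is homologically thin, so Lemma \ref{lem:vert} applies as soon as we verify $\tau(K) > 0$. In fact, I would establish the stronger statement $\tau(K) = g(K) = g$, which is needed anyway for the applications of Theorem \ref{thm:main} in the proof of Theorem \ref{thm:pretz}. For alternating knots one has $\tau(K) = -\sigma(K)/2$; for the pretzel knot $P(-2k_1-1, \ldots, -2k_{2g+1}-1)$ the diagram in Figure \ref{fig:Pknot} is alternating with all negative crossings of a definite type, so a standard signature computation (e.g.\ via the Goeritz matrix of the black regions of the alternating diagram, or by iterated skein arguments) yields $\sigma(K) = -2g$, and hence $\tau(K) = g$. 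Since $K$ has genus $g \geq 1$, this gives $\tau(K) > 0$ as required.

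Having verified the hypotheses, Lemma \ref{lem:vert} yields
\[
\V(K) = \frac{1}{2}\det(K) + \tau(K) - \frac{3}{2} = \frac{1}{2}\det(K) + g - \frac{3}{2}.
\]
Substituting the determinant formula from Lemma \ref{lem:Pdet}, namely
\[
\det(K) = \sum_{m=0}^{2g} 2^m(2g+1-m)\, s_{m,2g+1},
\]
gives exactly the claimed expression for $\V(K)$.

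The only nontrivial step is the identification $\tau(K) = g$. This could alternatively be argued without invoking the signature by pointing out that an alternating odd pretzel knot admits a genus-$g$ Seifert surface coming from the standard alternating diagram whose associated thin knot Floer staircase in the immersed-curve picture reaches height $g$; in the notation of Figure \ref{fig:alt_cpx}(b), this is precisely the statement that the distinguished curve $\gamma_0$ has maximal height equal to the Seifert genus, which is $g$. Either approach is standard, so I anticipate no real obstacle, and the rest of the corollary is just substitution.
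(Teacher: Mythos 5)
Your proposal is correct and follows essentially the same route as the paper: the paper's proof is a one-liner citing the Ozsv\'{a}th--Szab\'{o} result that $\tau(K)=-\tfrac{1}{2}\sigma(K)=g$ for alternating knots, and then combining Lemmas \ref{lem:vert} and \ref{lem:Pdet} exactly as you do. The extra detail you supply on computing the signature (and the alternative staircase-height argument) is fine but not needed beyond the citation.
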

\begin{proof}
Since $K$ is alternating, it follows from \cite{OSzAlt} that $\tau(K)=-\frac{1}{2}\sigma(K)=g.$  The desired result then follows from Lemmas \ref{lem:vert} and \ref{lem:Pdet}.
\end{proof}

We are now in a position to prove Theorem \ref{thm:pretz}.  We proceed by considering a few cases:
\begin{lem}\label{lem:pretz_2}
Let $K=K(k_1,\dots,k_{2+g1})$ with each $k_i\geq 0.$  If $g\geq 4$ and at least two $k_i$ are nonzero, then $K$ does not admit any chirally cosmetic surgeries along slopes of opposite signs.
\end{lem}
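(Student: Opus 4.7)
The plan is to invoke Corollary \ref{cor:combo} and derive a contradiction from the resulting identity relating the Heegaard Floer quantity $\V(K)+2g-1$ to the finite-type combination $(7a_2(K)^2-a_2(K)-10\,a_4(K))/v_3(K)$. I first check the hypotheses of that corollary. Every $k_i\geq 0$, so each $s_{m,2g+1}$ is nonnegative; the explicit formulas of Lemma \ref{lem:av} then show $a_2(K)>0$ and $v_3(K)>0$ for $g\geq 1$, since the pure-$g$ summands $\tfrac{1}{2}g(g+1)$ and $\tfrac{g(g+1)(2g+1)}{6}$ are strictly positive. Since $K$ is alternating with $\tau(K)=g(K)=g$ (as invoked in the proof of Corollary \ref{cor:Pvert}), Corollary \ref{cor:combo} applies: if $K$ admits a chirally cosmetic pair with slopes of opposite signs, then
\[
v_3(K)\bigl(\V(K)+2g-1\bigr)=7\,a_2(K)^2-a_2(K)-10\,a_4(K). \tag{$\ast$}
\]

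Next I would make $(\ast)$ completely explicit. Lemma \ref{lem:av} and Corollary \ref{cor:Pvert} already write $a_2(K)$, $v_3(K)$, and $\V(K)$ as polynomials in $g$ and the elementary symmetric polynomials $s_{m,2g+1}$. For $a_4(K)$ I would compute the Conway polynomial of $K$ directly, expanding by the skein relation \eqref{eq:conwaySkein} along a single twist strand and inducting on the twist parameter, in parallel with the derivation of $a_2$ and $v_3$ in \cite{CCP}. The resulting expression for $a_4(K)$ is a polynomial in $g$ and in $s_{m,2g+1}$ for $m\leq 4$, so the right-hand side of $(\ast)$ has total degree at most $4$ in the $k_i$'s.

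The crux is then a growth comparison. The hypothesis that at least two $k_i$ are nonzero forces $s_{2,2g+1}\geq 1$, so Corollary \ref{cor:Pvert} yields
\[
\V(K)+2g-1 \;\geq\; \tfrac{1}{2}\cdot 2^2(2g-1)\,s_{2,2g+1}\;=\;2(2g-1)\,s_{2,2g+1}.
\]
Multiplying by $v_3(K)$ and using that $v_3(K)$ already contains the summand $\tfrac{1}{2}\bigl(s_{1,2g+1}s_{2,2g+1}+s_{3,2g+1}\bigr)$, the left-hand side of $(\ast)$ acquires monomials in the $k_i$'s of total degree $\geq 5$, while the right-hand side has degree at most $4$; for $g\geq 4$ and $s_{2,2g+1}\geq 1$ this imbalance produces a strict inequality, contradicting $(\ast)$. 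The main obstacle I foresee is the ``boundary'' regime in which the $k_i$'s are bounded (for example $g=4$ with exactly two $k_i$'s equal to $1$), where neither high-degree symmetric polynomials nor large $k_i$'s drive the growth; this case will require a direct arithmetic check of the finitely many resulting identities, together with careful control of the sign of $a_4(K)$ to ensure that the strict inequality actually cuts in the right direction.
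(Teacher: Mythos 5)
Your starting point is the same as the paper's (invoke Corollary \ref{cor:combo} and contradict the resulting identity), but the contradiction mechanism you propose does not work as stated. Counting degrees in the $k_i$ only yields a strict inequality when the $k_i$ are large: a degree-$5$ monomial such as $s_{1,2g+1}s_{2,2g+1}\cdot s_{2,2g+1}$ evaluates to a small constant when exactly two of the $k_i$ equal $1$, while the right-hand side of your identity $(\ast)$ contains the pure-$g$ term $7\bigl(\tfrac12 g(g+1)\bigr)^2\sim\tfrac74 g^4$, which is unbounded in $g$. Your fallback --- ``a direct arithmetic check of the finitely many resulting identities'' --- does not close this, because the residual regime is not finite: for every $g\geq 4$ the configuration with exactly two $k_i=1$ is a separate case, so you are left with an infinite family indexed by $g$ that your argument never touches. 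What is actually needed (and what the paper does) is a coefficient-by-coefficient comparison in the $s_m$ with the $g$-dependence kept explicit: one lower-bounds $v_3(K)(\V(K)+2g-1)$ by an explicit polynomial in $g$, $s_1$, $s_2$, uses $s_1\geq 2$ and $s_2\geq 1$ to absorb cross terms into the pure-$g$ part, and checks that the resulting constant term $\tfrac23 g(g+1)(4g^2-1)$ already exceeds $\tfrac74 g^2(g+1)^2$ once $g\geq 4$.

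A second, smaller divergence: you plan to compute $a_4(K)$ explicitly by skein induction. The paper sidesteps this entirely by observing that $K$ is a positive knot, so $a_4(K)\geq 0$ by Cromwell's result, whence $(\ast)$ forces $v_3(K)(\V(K)+2g-1)<7a_2(K)^2$ and only the $a_2$ and $v_3$ sides need explicit formulas. Your route is not wrong in principle, but it adds a genuinely laborious computation (the paper only carries it out for the double-twist subfamily in Lemma \ref{lem:Ja4}, where a single twist parameter survives), and, more importantly, it does not repair the gap above.
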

\begin{proof}
As remarked above, $\tau(K)=g(K)$ for these knots, and so by Corollary \ref{cor:combo}, if $K$ did admit chirally cosmetic surgeries of opposite signs, then
\[
v_3(K)(\V(K)+2g-1) = 7a_2(K)^2 - a_2(K)-10a_4(K).
\]
Notice by Lemma \ref{lem:av} that $a_2(K)>0$ and moreover, it follows from \cite{Crom} that because $K$ is a positive knot (i.e., one with all positive crossings), $a_4(K)\geq 0.$ Hence, we must have that:
\begin{equation}\label{eq:cont_2}
v_3(K)(\V(K)+2g-1) < 7a_2(K)^2.
\end{equation}
By Corollary \ref{cor:Pvert}, we see that:
\begin{align*}
\V(K)+2g-1 &= 3g-\frac{5}{2}+\frac{1}{2}\sum_{m=0}^{2g}2^m (2g+1-m)s_m \\
&\geq 3g-\frac{5}{2}+\frac{1}{2}(2g+1+ 4gs_1+4(2g-1)s_2)\\
&= 4g-2 + 2gs_1+2(2g-1)s_2.
\end{align*}
Here (and throughout the remainder of this section) we shall suppress the second subscript of the symmetric polynomial; so $s_i$ should be taken to mean $s_{i,2g+1}.$  Now applying Lemma \ref{lem:av} we find:
\begin{align*}
v_3(K)&(\V(K)+2g-1)\\
&\geq \frac{1}{3}g(g+1)(2g+1)(2g-1) 
+ \left(\frac{1}{3}g^2(g+1)(2g+1)+g(2g+1)(2g-1)\right)s_1 \\
 &\qquad+ (g^2(2g+1)+g(2g-1))s_1^2  
 + \left(\frac{1}{3}g(g+1)(2g+1)(2g-1)+2g(2g-1)\right)s_2\\
&\qquad+2g(2g-1)s_2^2
+ (2g^2+g(2g+1)(2g-1)+2g(2g-1))s_1s_2 + g^2s_1^3.
\end{align*}
As $g\geq 4$ by hypothesis, $2g-1\geq 7$ and $2g+1\geq 9$ so that:
\begin{align*}
v_3(K)&(\V(K)+2g-1)\\
 &\geq 
\frac{1}{3}g(g+1)(2g+1)(2g-1) + \left(3g^2(g+1)+g(2g+1)(2g-1)\right)s_1 \\
 &\qquad+ (9g^2+g(2g-1))s_1^2  + \left(\frac{1}{3}g(g+1)(2g+1)(2g-1)+2g(2g-1)\right)s_2\\
&\qquad+56s_2^2 + (2g^2+7g(2g+1)+14g)s_1s_2 + g^2s_1^3.
\end{align*}
Moreover, since at least 2 of the $k_i$ are nonzero, $s_1\geq 2$ and $s_2\geq 1.$ Hence,
\begin{align*}
v_3(K)&(\V(K)+2g-1)\\
& \geq \frac{1}{3}g(g+1)(2g+1)(2g-1) + \left(3g^2(g+1)+g(2g+1)(2g-1)\right)s_1\\
 &\qquad+ (9g^2+g(2g-1))s_1^2  + \frac{1}{3}g(g+1)(2g+1)(2g-1)+2g(2g-1) \\
&\qquad+56s_2^2 + 2g^2s_1+14g(2g+1)s_2+14gs_1s_2 + 4g^2s_1\\
&\geq \frac{2}{3}g(g+1)(2g+1)(2g-1) + \left(3g^2(g+1)+g(2g+1)(2g-1)+6g^2\right)s_1 \\
 &\qquad+ (9g^2+g(2g-1))s_1^2 +56s_2^2 +14g(2g+1)s_2+14gs_1s_2 \\
 &=\frac{2}{3}g(g+1)(4g^2-1) + (3g^3+3g^2+4g^3-g+6g^2)s_1 \\
 &\qquad+ (9g^2+g(2g-1))s_1^2 +56s_2^2 +14g(2g+1)s_2+14gs_1s_2\\
 &\geq \frac{2}{3}g(g+1)(4g^2-1) + (7g^3+7g^2)s_1 + 9g^2s_1^2. +56s_2^2 +14g(2g+1)s_2 +14gs_1s_2
\end{align*}
By Lemma \ref{lem:av}, we see that:
\begin{align*}
v_3(K)&(\V(K)+2g-1)-7a_2^2(K)\\
& \geq \frac{2}{3}g(g+1)(4g^2-1) + (7g^3+7g^2)s_1 + 9g^2s_1^2. +56s_2^2 +14g(2g+1)s_2 +14gs_1s_2\\
&\qquad - \left(\frac{7}{4}g^2(g+1)^2 + 7g^2(g+1)s_1+7g^2s_1^2+7g(g+1)s_2+7s_2^2+14gs_1s_2\right)\\
&=g(g+1)\left(\frac{2}{3}(4g^2-1)-\frac{7}{4}g(g+1)\right) + 2g^2s_1^2 + 47s_2^2 + 7g(3g+1)s_2\\
&\geq g(g+1)\left(\frac{11}{12}g^2-\frac{7}{4}g-\frac{2}{3}\right)\\
&\geq g(g+1)\left(\frac{23}{12}g-\frac{2}{3}\right)\\
&>0,
\end{align*}
where we have used the hypothesis that $g\geq 4$ in the last two inequalities.  Thus we have found a contradiction to \eqref{eq:cont_2}, as desired.
\end{proof}
Now we turn our attention to the case where at most one $k_i$ is nonzero; in fact, we may assume it is $k_1.$  One sees that these knots are exactly the ``odd" double-twist knots of the form $J(-(2k+1),2g)$ (see Figure \ref{fig:Dtwist} for an illustration). 
\begin{figure}
\centering
\includegraphics[width=.75\textwidth]{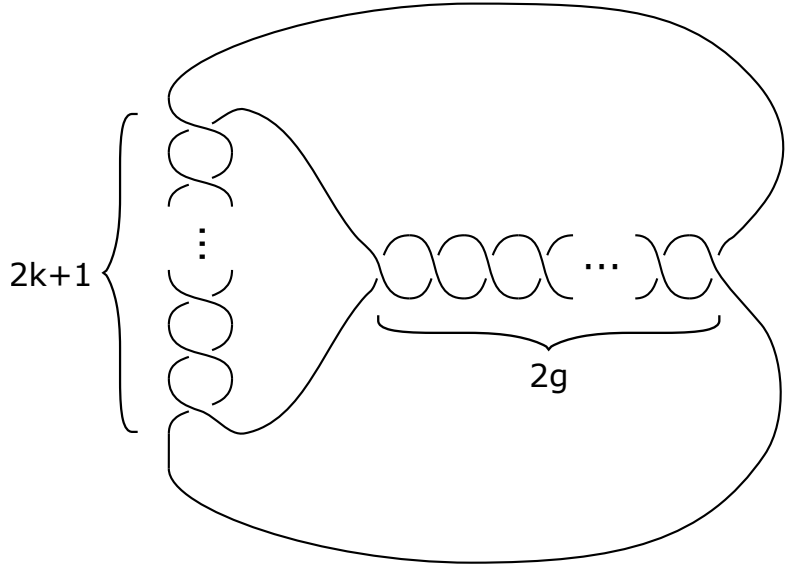}
\caption{The odd double twist knot $J(-(2k+1),2g).$  Here the twisting regions correspond to $k,g\geq 0.$}
\label{fig:Dtwist}
\end{figure}
\begin{lem}\label{lem:Ja4}
Let $K=J(-(2k+1),2g)$ for $k,g\geq 0.$ Then
\[
a_4(K)=\binom{g+2}{4}+k\binom{g+1}{3} = \frac{(g+2)(g+1)g(g-1)}{24}+\frac{(g+1)g(g-1)}{6}k.
\]
\end{lem}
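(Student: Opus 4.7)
The plan is to induct on $k$ via the Conway skein relation \eqref{eq:conwaySkein}, applied at a crossing inside the $(2k+1)$-twist region of $J(-(2k+1),2g)$. Changing the sign of that crossing and cancelling the resulting opposite-sign crossing with a neighbor converts $J(-(2k+1), 2g)$ into $J(-(2k-1), 2g)$; the oriented smoothing collapses the entire $(2k+1)$-twist region to two parallel strands, leaving a link $K_0$ that is independent of $k$. Reading off from the resulting diagram, $K_0$ is the closure of the $(2g)$-twist region alone, namely the two-component torus link $T(2, 2g)$ (up to orientation).

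The Conway polynomials of $(2,n)$-torus links and knots admit the standard closed forms
\[
\nabla_{T(2, 2g)}(z) = \sum_{j=0}^{g-1}\binom{g+j}{2j+1}z^{2j+1}, \qquad \nabla_{T(2, 2g+1)}(z) = \sum_{j=0}^{g}\binom{g+j}{2j}z^{2j},
\]
both of which follow by induction from the skein recursion $\nabla_{T(2,n)} = z\,\nabla_{T(2,n-1)} + \nabla_{T(2,n-2)}$ starting from the unknot and the Hopf link. After accounting for orientation conventions (the smoothing may produce the mirror link, whose Conway polynomial flips sign because $T(2, 2g)$ has two components), the $z^4$-coefficient of the skein relation from the previous paragraph reads
\[
a_4(J(-(2k+1), 2g)) - a_4(J(-(2k-1), 2g)) = [z^3]\nabla_{T(2,2g)}(z) = \binom{g+1}{3}.
\]

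For the base case $k = 0$, the knot $J(-1, 2g)$ coincides with the pretzel knot $K(0,0,\dots,0) = P(-1,-1,\dots,-1)$ with $2g+1$ strands of $-1$, which is (the mirror of) the $(2, 2g+1)$-torus knot. Extracting the $z^4$-coefficient from the closed form above gives $a_4(J(-1, 2g)) = \binom{g+2}{4}$. Telescoping the inductive recursion from $k = 0$ then produces the claimed identity $a_4 = \binom{g+2}{4} + k\binom{g+1}{3}$.

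The main obstacle is correctly identifying the smoothing link $K_0$ and pinning down the sign in the skein recursion; both require a careful check of the orientations of the strands through the twist region. Once these are settled, the remainder of the argument is a direct application of the binomial identities above for the Conway polynomials of $(2,n)$-torus links.
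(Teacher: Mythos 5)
Your proposal is correct and follows essentially the same route as the paper: apply the Conway skein relation at a crossing in the odd twist region to get the recursion $\nabla_{J(-(2k+1),2g)}-\nabla_{J(-(2k-1),2g)}=z\nabla_{T(2,2g)}$, telescope down to the base case $J(-1,2g)=T(2,2g+1)$, and read off the $z^4$-coefficient from the standard closed forms for Conway polynomials of $(2,n)$-torus links. The closed-form binomial expressions you quote agree with the coefficients the paper uses, so the argument goes through as written.
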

\begin{proof}
Applying the skein relation for the Conway polynomial \eqref{eq:conwaySkein} to a crossing in the odd twisting region, we find that
\[
\nabla_{J(-(2i+1),2g)}(z)-\nabla_{J(-(2i-1),2g)}(z) = z\nabla_{T(2,2g)}(z),
\]
where $T(2,2g)$ is the $(2,2g)$-torus link (with positive linking number).  Summing equations of the above form from $i=1$ to $k$ (and observing that $J(-1,2g)=T(2,2g+1)$) gives:
\begin{equation}\label{eq:J_skein}
\nabla_{J(-(2k+1),2g)}(z)= \nabla_{T(2,2g+1)}(z)+kz\nabla_{T(2,2g)}(z).
\end{equation}
The Conway polynomials of $(2,n)$-torus links are well-known (and can be derived inductively by repeatedly applying the skein relation \eqref{eq:conwaySkein}):
\begin{align*}
\nabla_{T(2,2g)}(z)&=\binom{g}{1}z+\binom{g+1}{3}z^3+\dots\\
\nabla_{T(2,2g+1)}(z)&=1+\binom{g+1}{2}z^2+\binom{g+2}{4}z^4+\dots
\end{align*}
Substituting these back into \eqref{eq:J_skein} gives:
\[
\nabla_{J(-(2k+1),2g)}(z) = 1 + \left[\binom{g+1}{2}+kg\right]z^2+\left[\binom{g+2}{4}+k\binom{g+1}{3}\right]z^4+\dots
\]
We now simply read off $a_4(J(-(2k+1),2g))$ as the coefficient of $z^4.$
\end{proof}
Recalling that $J(-(2k+1),2g) = K(k,0,\dots,0),$ we compute from Lemma \ref{lem:av} and Corollary \ref{cor:Pvert}:
\begin{equation}\label{eq:J_av}
\begin{cases}
a_2(J(-(2k+1),2g)) = \frac{1}{2}g(g+1)+kg\\
v_3(J(-(2k+1),2g)) = \frac{1}{2}\left(\frac{g(g+1)(2g+1)}{3}+g(2g+1)k+gk^2\right)\\
\V(J(-(2k+1),2g)) = 2g-1 + 2gk
\end{cases}
\end{equation}
\begin{lem}\label{lem:J_5}
Let $K=J(-(2k+1),2g)).$  If $k>0$ and $g\geq 5$ then $K$ admits no chirally cosmetic surgeries along slopes of opposite signs.
\end{lem}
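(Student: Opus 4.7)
The plan is to apply Corollary \ref{cor:combo} to derive an obstruction in the form of a polynomial identity in $k$ and $g$, and then to show that this identity has no solutions with $k \geq 1$ and $g \geq 5$ by establishing a strict inequality between its two sides.

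First, I would verify that the hypotheses of Corollary \ref{cor:combo} are met. Since $K = J(-(2k+1),2g) = K(k,0,\dots,0)$ is alternating with $\tau(K) = g(K) = g$ (as already used in the proof of Corollary \ref{cor:Pvert}), and $v_3(K) > 0$ whenever $g,k > 0$ by \eqref{eq:J_av}, the corollary applies. Assuming for contradiction that $K$ admits chirally cosmetic surgeries along slopes of opposite signs, one obtains
\[
v_3(K)(\V(K) + 2g - 1) = 7a_2(K)^2 - a_2(K) - 10 a_4(K).
\]

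Next, I would substitute the explicit formulas from \eqref{eq:J_av} and Lemma \ref{lem:Ja4} into both sides, clear denominators, and collect terms by powers of $k$. After straightforward algebra, the difference of the two sides should reduce to
\[
\frac{g}{3}\bigl[\,-2(g+1)(2g+1) + (2g^3 - g^2 - 20g - 5)k + 3(2g^2 - 4g - 1)k^2 + 3g k^3\,\bigr].
\]
The key observation is that for $g \geq 5$ each of the coefficients of $k$, $k^2$, and $k^3$ inside the bracket is strictly positive, so the bracketed expression is monotonically increasing in $k$ for $k \geq 0$. Evaluating at $k = 1$ yields $2g^3 + g^2 - 35g - 10$, which equals $90$ at $g = 5$ and whose derivative $6g^2 + 2g - 35$ is positive for $g \geq 3$. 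Hence the bracketed expression is strictly positive for all $k \geq 1$ and $g \geq 5$, contradicting the assumed equality.

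The main obstacle is purely the algebraic bookkeeping required to expand and simplify both sides into the form above; once the polynomial difference is isolated, the positivity argument is essentially immediate. Note that, in contrast to the proof of Lemma \ref{lem:pretz_2} where the weak bound $a_4 \geq 0$ (from positivity of the knot) sufficed, here one must use the exact value of $a_4$ from Lemma \ref{lem:Ja4}: with only $k_1 = k$ nonzero, the slack previously provided by $s_1 \geq 2$ and $s_2 \geq 1$ is no longer available, so a looser estimate on $a_4$ would leave the constant term $-2(g+1)(2g+1)$ uncompensated at small values of $k$.
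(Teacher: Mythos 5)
Your proposal is correct and follows essentially the same route as the paper: apply Corollary \ref{cor:combo}, substitute the explicit formulas from \eqref{eq:J_av} and Lemma \ref{lem:Ja4} to reduce the obstruction to a cubic in $k$ (your bracketed expression is exactly $\tfrac{3}{g}$ times the paper's), and conclude by noting that for $g\geq 5$ the coefficients of $k$, $k^2$, $k^3$ are positive so the minimum over $k\geq 1$ occurs at $k=1$, where the value $2g^3+g^2-35g-10$ is strictly positive. The algebra and the positivity checks all verify, so no changes are needed.
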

\begin{proof}
As before, we suppose for the sake of a contradiction that $K$ admits chirally cosmetic surgeries along slopes of opposite signs.  Then by Corollary \ref{cor:combo}, we must have
\[
v_3(K)(\V(K)+2g-1) = 7a_2(K)^2 - a_2(K)-10a_4(K).
\]
Hence, from Lemma \ref{lem:Ja4} and \eqref{eq:J_av}, we compute:
\begin{align*}
0 &= v_3(K)(\V(K)+2g-1) - 7a_2(K)^2 + a_2(K)+10a_4(K)\\
&= \frac{1}{2}\left(\frac{g(g+1)(2g+1)}{3}+g(2g+1)k+gk^2\right)(4g-2+2gk) - 7\left(\frac{1}{2}g(g+1)+kg\right)^2\\
&\qquad  + \frac{1}{2}g(g+1)+kg + \frac{5(g+2)(g+1)g(g-1)}{12}+\frac{5(g+1)g(g-1)}{3}k\\
& = \frac{1}{3}g(g+1)(2g+1)(2g-1)-\frac{7}{4}g^2(g+1)^2+\frac{1}{2}g(g+1) + \frac{5(g+2)(g+1)g(g-1)}{12} \\
&\qquad + \left(g(2g+1)(2g-1)+\frac{1}{3}g^2(g+1)(2g+1)-7g^2(g+1) + g + \frac{5(g+1)g(g-1)}{3}\right)k\\
&\qquad +\left(g(2g-1)+g^2(2g+1)-7g^2\right)k^2 + g^2k^3\\
&= g(g+1)\left(-\frac{4}{3}g-\frac{2}{3}\right) + g\left(\frac{2}{3}g^3-\frac{1}{3}g^2-\frac{20}{3}g-\frac{5}{3}\right)k + g(2g^2-4g-1)k^2+g^2k^3.
\end{align*}
Since $g\geq 5$ by hypothesis, $\frac{2}{3}g^3-\frac{1}{3}g^2-\frac{20}{3}g-\frac{5}{3}\geq 3g^2-\frac{20}{3}g-\frac{5}{3}\geq \frac{25}{3}g-\frac{5}{3}>0.$ Similarly, $2g^2-4g-1\geq 6g-1>0.$  Hence, since $k\geq 1,$ we have that:
\begin{align*}
0 &\geq g(g+1)\left(-\frac{4}{3}g-\frac{2}{3}\right) + g\left(\frac{2}{3}g^3-\frac{1}{3}g^2-\frac{20}{3}g-\frac{5}{3}\right) + g(2g^2-4g-1)+g^2\\
&=g\left(\frac{2}{3}g^3+\frac{1}{3}g^2-\frac{35}{3}g-\frac{10}{3}\right)\\
&\geq g\left(\frac{20}{3}g-\frac{10}{3}\right)\\
&>0,
\end{align*}
where again the last two inequalities used the assumption that $g\geq 5.$  The contradiction completes the proof.
\end{proof}
Finally, we consider the case of $g=4;$ that is, $K=J(-(2k+1),8).$  From the proof of Lemma \ref{lem:J_5}, we compute that:
\begin{align*}
v_3(K)&(\V(K)+2g-1) - 7a_2(K)^2 + a_2(K)+10a_4(K)\\
&= \left. g(g+1)\left(-\frac{4}{3}g-\frac{2}{3}\right) + g\left(\frac{2}{3}g^3-\frac{1}{3}g^2-\frac{20}{3}g-\frac{5}{3}\right)k + g(2g^2-4g-1)k^2+g^2k^3\right|_{g=4}\\
&= -120 + 36k + 60k^2 + 16k^3.
\end{align*}
It is clear that if $k\geq 2$ then this quantity is positive.  On the other hand, when $k=1,$ this quantity is -8.  Once again, we may conclude by Corollary \ref{cor:combo}:
\begin{lem}\label{lem:J_4}
Let $K=J(-(2k+1),8)).$  If $k>0$ then $K$ admits no chirally cosmetic surgeries along slopes of opposite signs.
\end{lem}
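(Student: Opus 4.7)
The proof will simply be a short case analysis based on the explicit polynomial identity displayed immediately before the lemma statement. Since $K = J(-(2k+1),8) = K(k,0,0,\dots,0)$ is an alternating (hence thin) pretzel knot with $\tau(K) = g(K) = 4 > 0$, the hypotheses of Corollary \ref{cor:combo} are satisfied, provided one also checks $v_3(K) \neq 0$; this follows directly from the second formula in \eqref{eq:J_av}, since every term there is nonnegative and the leading term $\tfrac{g(g+1)(2g+1)}{6} = 30$ is positive when $g=4$.

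The plan is then as follows. Suppose, for contradiction, that $K$ admits a chirally cosmetic surgery pair along slopes $p/q$ and $p/q'$ of opposite signs. By Corollary \ref{cor:combo},
\[
v_3(K)(\V(K)+2g-1) = 7a_2(K)^2 - a_2(K) - 10a_4(K).
\]
The preceding computation (carried out from Lemma \ref{lem:av}, Corollary \ref{cor:Pvert}, and Lemma \ref{lem:Ja4}, specialized to $g=4$) gives
\[
v_3(K)(\V(K)+2g-1) - 7a_2(K)^2 + a_2(K) + 10a_4(K) = -120 + 36k + 60k^2 + 16k^3.
\]
Thus it suffices to show this cubic in $k$ is nonzero for every integer $k \geq 1$.

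For $k = 1$, direct substitution yields $-120 + 36 + 60 + 16 = -8 \neq 0$. For $k = 2$, substitution gives $-120 + 72 + 240 + 128 = 320 > 0$, and for $k \geq 2$ one can simply observe that the derivative $36 + 120k + 48k^2$ is positive and the value at $k = 2$ is already positive, so $16k^3 + 60k^2 + 36k - 120$ is strictly increasing and positive throughout $k \geq 2$. In every case the displayed quantity fails to vanish, contradicting Corollary \ref{cor:combo}. This contradiction rules out chirally cosmetic surgeries of opposite signs on $K$, completing the proof.

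The only potential obstacle is really bookkeeping: one must make sure the $g=4$ specialization of the polynomial identity displayed just above the lemma is correct, and that Corollary \ref{cor:combo} indeed applies (that is, $\tau(K) = g(K)$ and $v_3(K) \neq 0$). Both are routine given the machinery already established, so no substantive new idea is needed — the lemma is essentially a finite numerical check rounding out the cases $g \leq 4$ left open by Lemma \ref{lem:J_5}.
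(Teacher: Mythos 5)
Your proof is correct and follows the paper's own argument essentially verbatim: specialize the cubic identity from the proof of Lemma \ref{lem:J_5} to $g=4$ to get $16k^3+60k^2+36k-120$, check it is $-8$ at $k=1$ and positive for $k\geq 2$, and conclude via Corollary \ref{cor:combo}. The extra verification that $v_3(K)\neq 0$ is a reasonable (and correct) bit of added care that the paper leaves implicit.
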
\qed

We now complete the classification of chirally cosmetic surgeries for the alternating odd pretzel knots.
\begin{proof}[Proof of Theorem \ref{thm:pretz}]
Theorem 6.4 of \cite{IIS} covers the genus 1 case, while Theorem 1.1 of \cite{CCP} deals with the cases $g=2,3.$  Now, we see that Lemmas \ref{lem:pretz_2}, \ref{lem:J_5}, and \ref{lem:J_4} rule out chirally cosmetic surgeries along slopes of opposite signs for $g \geq 4$ and at least one $k_i>0.$  Hence for these knots, we only need to consider slopes of the same sign.  However, Theorem \ref{thm:rk} implies that any knot admitting such cosmetic surgeries must be an L-space knot, but $K=K(k_1,\dots, k_{2g+1})$ is not an L-space knot whenever at least one $k_i>0.$  This can be seen, for instance, by noting that the immersed curve invariant of such a knot  will have at least one figure-eight component, as in the discussion leading to the proof of Lemma \ref{lem:vert}.  Alternatively, one can see that the Alexander polynomial does not have the required form (e.g., the leading coefficient is not $\pm 1$ so that $K$ is not even fibered).
\end{proof}

\subsection{Whitehead doubles}
We now turn our attention to Whitehead doubles.  Let us first collect some facts: the Conway polynomial of $D_+(K,n)$ is well-known to be $1-nz^2,$ so that $a_2(D_+(K,n)) = -n$ nad $a_4(D_+(K,n))=0.$  Moreover, by Proposition 5.1 of \cite{IW}, we also know that $v_3(D_+(K,n)) = -2a_2(K) + \frac{n^2-n}{2}.$  Whitehead doubles have genus (at most) one, and we have the following due to Hedden (Theorem 1.5 of \cite{Hed}):
\begin{equation}\label{eq:WD_tau}
\tau(D_+(K,n)) = \begin{cases}
1 & n<2\tau(K) \\
0 & n\geq 2\tau(K).
\end{cases}
\end{equation}

Our first result concerns rules out cosmetic surgeries for zero-framed Whitehead doubles in a broad range of cases; it follows purely from finite-type obstructions:
\begin{thm}
Given a knot $K$ with $a_2(K)\neq 0,$  then $D_+(K,0)$ does not admit any chirally cosmetic surgeries.
\end{thm}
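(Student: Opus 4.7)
The plan is to apply Ito's finite-type obstruction (Theorem \ref{thm:ft}) directly, exploiting the fact that three of the four finite-type invariants appearing in that formula vanish for $D_+(K,0)$. From the facts recorded just before the theorem statement, we have $a_2(D_+(K,0)) = 0$ (since $a_2(D_+(K,n)) = -n$), $a_4(D_+(K,0)) = 0$, and $v_3(D_+(K,0)) = -2a_2(K) + \tfrac{0^2 - 0}{2} = -2a_2(K)$, which is nonzero by the hypothesis $a_2(K) \neq 0$.

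Next, I would argue by contradiction: suppose $D_+(K,0)$ admits a pair of chirally cosmetic surgeries along distinct slopes $r$ and $r'$. Because cosmetic surgeries produce manifolds with isomorphic first homology, the coprime numerators of $r$ and $r'$ must agree up to sign, so after possibly replacing $q'$ with $-q'$ we may write $r = p/q$ and $r' = p/q'$ with a common $p \geq 0$. Theorem \ref{thm:ft} then yields
\[
2p\, v_3(D_+(K,0)) = \bigl(7 a_2(D_+(K,0))^2 - a_2(D_+(K,0)) - 10 a_4(D_+(K,0))\bigr)(q + q') = 0,
\]
and since $v_3(D_+(K,0)) = -2a_2(K) \neq 0$, we conclude $p = 0$.

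To finish, I would simply observe that the only Dehn surgery slope whose coprime numerator is $0$ is the slope $0$ itself, so $r = r' = 0$, contradicting distinctness. I do not anticipate a serious obstacle: the computations of $a_2, a_4, v_3$ for Whitehead doubles are already recorded in the paragraph immediately preceding the theorem, and the rest is just substitution into Theorem \ref{thm:ft} together with the standard homological reduction to a common numerator $p$. The only point requiring mild care is the bookkeeping in that reduction, which is genuinely routine.
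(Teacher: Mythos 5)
Your proposal is correct and matches the paper's proof essentially verbatim: both compute $a_2(D_+(K,0))=a_4(D_+(K,0))=0$ and $v_3(D_+(K,0))=-2a_2(K)\neq 0$, then apply Theorem \ref{thm:ft} to force $p=0$ and contradict distinctness of the slopes. The extra remark justifying the common numerator $p$ is a routine point the paper leaves implicit.
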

\begin{proof}
Suppose $D_+(K,0)$ admitted chirally cosmetic surgeries along slopes $\frac{p}{q}$ and $\frac{p}{q'}.$ By the above formulas, we see that in this case, $a_2(D_+(K,0)) = a_4(D_+(K,0)) = 0,$ while $v_3(D_+(K,0))\neq 0.$  Then Theorem \ref{thm:ft} implies that $p=0,$ but then the two slopes were not distinct to begin with.
\end{proof}

For our next result, we make use of an obstruction due to Ichihara, Ito, and Saito obtained by combining Theorem \ref{thm:ft} with some Casson-type invariant obstructions.
\begin{lem}[Theorem 6.1 of \cite{IIS}]\label{lem:ft2}
If a knot $K$ admits chirally cosmetic surgeries along slopes $\frac{p}{q}$ and $\frac{p}{q'}$ with $q+q'\neq 0,$ then:
\[
4|a_2(K)| \leq d(K)\left|\frac{7a_2(K)^2-a_2(K)-10a_4(K)}{2v_3(K)}\right|,
\]
where $d(K)$ is the degree of the Conway polynomial of $K.$
\end{lem}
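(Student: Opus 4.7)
The plan is to combine Ito's finite-type obstruction (Theorem \ref{thm:ft}) with an independent obstruction of Casson--Walker type. Under the hypothesis $q+q' \neq 0$, Theorem \ref{thm:ft} can be rearranged as
\[
\frac{p}{q+q'} \;=\; \frac{7a_2(K)^2 - a_2(K) - 10a_4(K)}{2v_3(K)},
\]
so the inequality to be proved reduces to establishing the independent bound $4|a_2(K)| \leq d(K)\,|p/(q+q')|$ by some other means.

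For that second ingredient I would turn to the Casson--Walker invariant $\lambda_{CW}$, which is odd under orientation reversal: $\lambda_{CW}(-Y) = -\lambda_{CW}(Y)$. The chirally cosmetic hypothesis $S^3_{p/q}(K) \cong -S^3_{p/q'}(K)$ therefore forces
\[
\lambda_{CW}(S^3_{p/q}(K)) + \lambda_{CW}(S^3_{p/q'}(K)) = 0.
\]
Walker's rational surgery formula expresses each summand as a knot term proportional to $(q/p)\,a_2(K)$ plus a lens-space contribution $\lambda_{CW}(L(p,q))$ built from Dedekind sums. Isolating the knot terms yields an identity of the form
\[
\frac{(q+q')\,a_2(K)}{p} \;=\; -\bigl(\lambda_{CW}(L(p,q)) + \lambda_{CW}(L(p,q'))\bigr).
\]

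The last step is to bound the Dedekind-sum side by $d(K)/4$, which combined with the above gives $4|a_2(K)|\cdot |q+q'| \leq d(K)\cdot |p|$. Substituting the expression for $p/(q+q')$ coming from Theorem \ref{thm:ft} then yields the claimed inequality. The principal obstacle is precisely this Dedekind-sum estimate: to get the constant $d(K)/4$ rather than a bound involving $p$ itself, one must exploit the arithmetic constraint on $q,q'$ modulo $p$ forced by the homeomorphism type of the two lens-space summands, together with a Conway-degree bound on the ordinary Casson invariant of the 0-surgery (or a Boyer--Lines-type inequality). Getting the sharp constant is where the real work lies, and where I would look to the argument of \cite{IIS} for the precise combinatorial identity that closes the gap.
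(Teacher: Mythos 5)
The paper does not actually prove this statement: it is imported wholesale as Theorem~6.1 of \cite{IIS}, with only the remark that it is ``obtained by combining Theorem~\ref{thm:ft} with some Casson-type invariant obstructions.'' So the only part of your proposal that can be checked against the paper is the assembly step, and that part is fine: under $q+q'\neq 0$ (and implicitly $v_3(K)\neq 0$), Theorem~\ref{thm:ft} gives $p/(q+q') = (7a_2(K)^2-a_2(K)-10a_4(K))/(2v_3(K))$, so the lemma is equivalent to the independent inequality $4|a_2(K)|\,|q+q'| \leq d(K)\,|p|$. You have correctly identified that this second inequality is where all the content lives, and your general instinct (Casson--Walker invariant, orientation-reversal antisymmetry, Walker's rational surgery formula) is consistent with the paper's one-sentence description of where the result comes from.

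The gap is that you do not prove that second inequality, and the specific route you sketch for it would not close. Your identity
\[
\frac{(q+q')\,a_2(K)}{p} = -\bigl(\lambda_{CW}(L(p,q)) + \lambda_{CW}(L(p,q'))\bigr)
\]
has a right-hand side depending only on the integers $(p,q,q')$ --- Dedekind sums know nothing about $K$ --- so no estimate of that side can produce the knot invariant $d(K)=\deg\nabla_K$ as a bound. Worse, individual Dedekind sums grow like $p/12$, and the arithmetic constraint on $q,q'$ modulo $p$ coming from the linking form does not obviously force the sum $s(q,p)+s(q',p)$ to be bounded independently of $p$; you would need to establish that, and even then you would only get an absolute constant, not $d(K)/4$. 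The degree of the Conway polynomial has to enter through a genuinely knot-dependent term (in \cite{IIS} it comes from bounding the knot-dependent part of a torsion/signature-type refinement of the surgery formula, not from lens-space arithmetic), which is precisely the ingredient your proposal defers to the literature. As a standalone proof the proposal therefore reduces the lemma to an unproven estimate whose stated form cannot be correct as written.
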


\WD*
\begin{proof}
We note that in this case, $v_3(D_+(K,n))=\frac{1}{2}(n^2-n),$ and so is nonzero when $|n|\geq 5.$  Thus, Theorem \ref{thm:ft} implies that $D_+(K,n)$ does not admit any chirally cosmetic surgeries along opposite slopes.  Hence, we may apply Lemma \ref{lem:ft2} to see that if $D_+(K,n)$ admits chirally cosmetic surgeries, then:
\begin{align*}
4|n| &\leq 2\left|\frac{7n^2+n}{n^2-n}\right|\\
&=2\left|\frac{7n+1}{n-1}\right|\\
&=2\left|7+\frac{8}{n-1}\right|.
\end{align*}
Since $|n|\geq 5,$ we find that
\[
10\leq 2|n| \leq \left|7+\frac{8}{n-1}\right| \leq 9,
\]
a contradiction.  Now we turn to the second claim: notice that the hypothesis that $2\tau(K)>n$ is exactly what is required in order for $\tau(D_+(K,n))$ to equal 1, according to \eqref{eq:WD_tau}.  Since $D_+(K,n)$ is genus 1, we may apply Corollary \ref{cor:combo} to see that if $D_+(K,n)$ admits chirally cosmetic surgeries along slopes of opposite signs, then:
\[
\frac{n^2-n}{2}(\V(D_+(K,n))+1) = 7n^2+n.
\]
If $n=1,$ this leads to a contradiction.  Otherwise, this implies:
\[
\frac{\V(D_+(K,n))+1}{2} = 7+\frac{8}{n-1}.
\]
Since $\tau(D_+(K,n))=1,$ one can see that $\V(D_+(K,n))$ must be odd, and so $ 7+\frac{8}{n-1}$ must be an integer, which is impossible for $n=\pm 4,-2.$
\end{proof}
\begin{rmk}
It is possible to prove similar finiteness results for different values of $a_2(K)$ using the same methods (see also Theorem 6.3 of \cite{IIS}).  Then, there will similarly be certain values of $n$ which can be excluded under extra hypotheses of the type $2\tau(K)>n.$
\end{rmk}

\section{L-space knots}
In this section, we prove:
\lspace*
Recall that a closed 3-manifold $M$ is said to be an \textit{L-space} if ${\rk\widehat{HF}(M,\mathfrak{s})=1}$ for each $\mathfrak{s}\in Spin^c(M).$  A knot $K$ is called an \textit{L-space knot} if $S^3_K(r)$ is an L-space for some $r\in \mathbf{Q}.$ We shall make use of the following criterion for a knot to be an L-space knot in the immersed curve context:
\begin{lem}\label{lem:lspace}
Given a knot $K,$ let $\overline{\Gamma} \subset \overline{T}_{\bullet}$ be the (standard lift of) the immersed curve invariant of $K.$  If $K$ is an L-space knot, then $\overline{\Gamma}$ consists of only one component ($\gamma_0$), which can by made embedded by a homotopy.
\end{lem}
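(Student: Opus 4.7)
The plan is to apply the rank formula \eqref{eq:rk} to a large-slope L-space surgery. First, since L-space knots satisfy $\tau(K) = \pm g(K)$, by passing to the mirror we may assume $\tau(K) = g(K) > 0$ (the unknot case being immediate), whence $\epsilon(K) = 1$. Thus the unique non-vertical segment of $\gamma_0$ in pulled-tight position has slope $2g(K)-1$.

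Recall that a positive L-space knot admits an L-space surgery at every integer slope $n \geq 2g(K)-1$. For such an $n$, the rank of the Heegaard Floer homology of $S^3_K(n)$ equals $|H_1(S^3_K(n))| = n$. Substituting $p = n$ and $q = 1$ into \eqref{eq:rk} gives
$$n = \bigl(n - (2g(K)-1)\bigr) + \V(K),$$
and so $\V(K) = 2g(K)-1$. The next step is to observe that in the pulled-tight form, $\gamma_0$ already accounts for $2g(K)-1$ vertical segments: its slanted segment traverses a net vertical height of $2g(K)-1$ while wrapping once around the cylinder, and closing the curve up requires that many compensating unit-length vertical segments between consecutive punctures along $\mu$. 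Any additional component of $\overline{\Gamma}$ would be a closed curve supported in a neighborhood of $\mu$, contributing at least two further vertical segments (a simple figure-eight contributes exactly two). Hence $\V(K) = 2g(K)-1$ forces $\overline{\Gamma} = \gamma_0$.

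It remains to show that $\gamma_0$ admits an embedded representative. The cleanest route is to invoke the Ozsv\'{a}th--Szab\'{o} classification of Knot Floer complexes for L-space knots: $CFK^-(K)$ is a staircase complex, and the immersed curve invariant associated to such a complex is an honestly embedded staircase-shaped loop in $\overline{T}_\bullet$. Alternatively, given the pulled-tight description of $\gamma_0$ as one slanted segment of slope $2g(K)-1$ together with exactly $2g(K)-1$ vertical segments wrapping once around the cylinder, one can verify embeddedness directly by a combinatorial check that no two segments cross. The main obstacle I anticipate is justifying the exact count of $2g(K)-1$ vertical segments coming from $\gamma_0$ itself (as opposed to from the full $\overline{\Gamma}$), and the embeddedness conclusion; both are immediate from the staircase structure, but require a bit of care if one wants a self-contained topological argument.
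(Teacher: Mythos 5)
Your argument is correct in outline, but it takes a genuinely different route from the paper. The paper works Spin$^c$-structure by Spin$^c$-structure: using Theorem \ref{thm:pairS}, for each level $i$ it picks a lift of $L_{p/q}$ crossing $\mu$ at level $i$ to conclude that some Spin$^c$-summand has rank at least $n_i(K)$, whence the L-space condition forces $n_i(K)\leq 1$ for every $i$; this pointwise bound immediately kills any extra component and any doubling-back of $\gamma_0$ (both would put two vertical strands at some level). You instead use the \emph{total} rank formula \eqref{eq:rk} at a large integer L-space slope to get $\V(K)=2g(K)-1$, then observe that $\gamma_0$ alone already needs at least $2g(K)-1$ downward level-crossings to close up against its slanted segment, so nothing is left over for other components or for upward excursions. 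Both arguments work, and yours has the pleasant side effect of establishing Corollary \ref{cor:LS} en route. The trade-off is that your version imports more external input: you need $\tau(K)=\pm g(K)$ and $\epsilon(K)=\pm1$ for L-space knots (which the paper only \emph{deduces} from this lemma, though it is of course an independent theorem of Ozsv\'ath--Szab\'o) and the fact that every integer slope $n\geq 2g-1$ is an L-space slope, whereas the paper's proof needs only the single given L-space surgery and no prior knowledge of $\tau$ or $\epsilon$. Two small points to tighten: your claim that any additional component contributes at least two vertical segments silently excludes a component that encircles a single puncture and so contributes none --- the paper makes the same exclusion, but you should state it; and for embeddedness you do not need to fall back on the staircase classification, since your own counting already shows all $2g-1$ level-crossings of $\gamma_0$ are downward and occur one per level, which gives embeddedness directly.
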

\begin{proof}
This follows from standard results for the Knot Floer complex of an L-space knot, but it can be seen more directly. Let $K$ be a knot such that $S^3_K(\frac{p}{q})$ is an L-space.  Denoting the immersed curve invariant of $K$ by $\Gamma$ and its standard lift to $\overline{T}_{\bullet}$ by $\overline{\Gamma}.$  For any integer $i,$ there is a lift $\overline{L}_i \subset \overline{T}_{\bullet}$ of $L_{p/q}$ that intersects $\mu$ at level $i,$ and hence intersects $\overline{\Gamma}$ at least $n_i(K)$ times. By Theorem \ref{thm:pairS}, this implies that there exists a Spin$^c$-structure $\mathfrak{s}$ such that $\rk \widehat{HF}(S^3_K(\frac{p}{q}),\mathfrak{s}) \geq n_i(K).$ By the hypothesis that $S^3_K(\frac{p}{q})$ is an L-space, this implies that $n_i(K)\leq 1.$

Now, suppose $\overline{\Gamma}$ has a component that is not $\gamma_0.$ By the structure of $\overline{\Gamma},$ it must be a closed curve supported in a neighborhood of the midline $\mu,$ (and in fact, it cannot simply wrap around a single puncture), so it must ``double-back" on itself at some point.  This would contribute at least 2 to some $n_i(K),$ which is impossible by the above.  For the same reason, $\gamma_0$ must also not ``double-back" on itself, and so it must be embedded when ``pulled tight."
\end{proof}

\begin{figure}
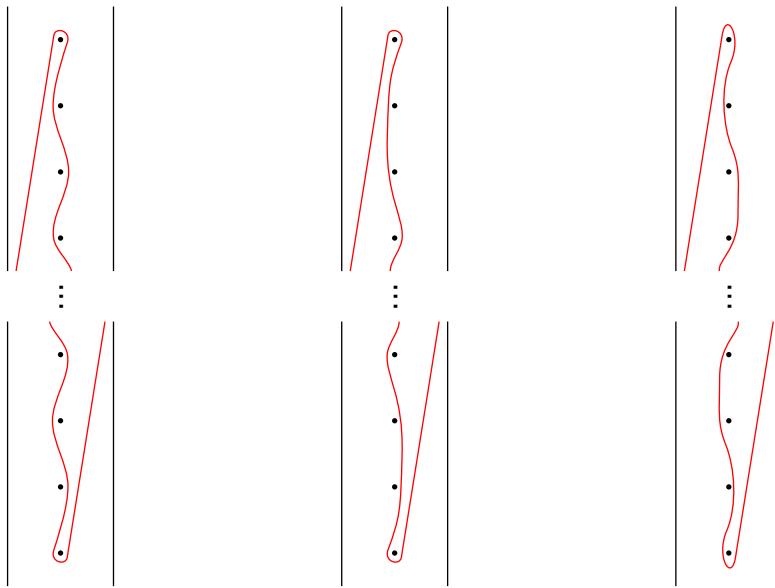

\centering
\includegraphics[height = .4\textheight]{alt_immersed_1}
\hspace{.2\textwidth}
\includegraphics[height = .4\textheight]{Lspace_immersed_2}
\hspace{.2\textwidth}
\includegraphics[height = .4\textheight]{Lspace_immersed_4}

\caption{Examples of different kinds of immersed curve invariants for L-space knots.}
\label{fig:lspace_ex}
\end{figure}

Hence, the immersed curve invariant for L-space knots consists of a single curve which ``weaves" between the punctures at various heights; see Figure \ref{fig:lspace_ex} for some examples of this behavior.  From this structure result, we can immediately see the following:
\begin{cor}\label{cor:LS}
Let $K$ be an L-space knot. Then $|\tau(K)|=g(K)$ and $\V(K)=2g(K)-1.$
\end{cor}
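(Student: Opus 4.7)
Proof proposal. The plan is to combine the structural information in Lemma~\ref{lem:lspace} with the rank formula~\eqref{eq:rk}. Lemma~\ref{lem:lspace} tells us that the immersed curve invariant $\overline{\Gamma}$ of an L-space knot $K$ consists solely of the embedded curve $\gamma_0$, and the proof of that lemma moreover yields the pointwise bound $n_i(K) \leq 1$ for every $i \in \mathbf{Z}$. After possibly replacing $K$ by its mirror, I may assume $\tau(K) \geq 0$; and since $K$ is nontrivial, $\tau(K) > 0$.

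First, I would argue that $\gamma_0$ must have a monotone staircase structure when pulled tight. By definition, following $\gamma_0$ from the ``left'' of $\mu$, the first intersection with $\mu$ occurs at level $\tau(K)$. Since $n_i(K) \leq 1$, at each level there is at most one vertical segment of $\gamma_0$, so after crossing $\mu$ and turning at a puncture, $\gamma_0$ cannot return to a level it has already traversed without either violating the bound $n_i \leq 1$ or creating a self-intersection. This forces $\gamma_0$ to descend monotonically, contributing exactly one vertical segment at each of the levels $\tau(K) - 1, \tau(K) - 2, \ldots, -(\tau(K) - 1)$ and alternating sides of $\mu$, before exiting to the ``right'' at level $-\tau(K)$ by the $180^{\circ}$ rotational symmetry. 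In particular, this forces $\epsilon(K) = 1$ (the first turn is downward) and $\V(K) = 2\tau(K) - 1$.

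Moreover, the maximum level at which $\gamma_0$ intersects $\mu$ is precisely $\tau(K)$, namely the level of the entry crossing from the left. Hence $g(K) = \tau(K)$. Combining with the count of vertical segments gives $|\tau(K)| = g(K)$ (the absolute value accommodating the mirror case) and $\V(K) = 2g(K) - 1$, as claimed. As a consistency check, one can verify $\V(K) = 2\tau(K) - \epsilon(K)$ directly via the rank formula \eqref{eq:rk}: for $p/q$ sufficiently large with $S^3_K(p/q)$ an L-space, $\rk\widehat{HF}(S^3_K(p/q)) = p$, while \eqref{eq:rk} gives $p - q(2\tau(K) - \epsilon(K)) + q\V(K)$, so these must agree.

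The main obstacle is the rigorous justification that embeddedness together with $n_i(K)\leq 1$ really does force the monotone staircase. One must rule out the possibility that $\gamma_0$, after the first $\mu$-crossing, could wiggle up to a level strictly above $\tau(K)$ or double back and revisit an intermediate level from the opposite side of $\mu$. A careful local analysis of the possible configurations of $\gamma_0$ near each puncture, in the spirit of the intersection-counting arguments used in the proofs of Lemmas~\ref{lem:pos} and \ref{lem:neg}, handles these cases: any such deviation either produces two vertical segments at a common level (contradicting $n_i \leq 1$) or forces a transverse self-intersection of $\gamma_0$ (contradicting embeddedness).
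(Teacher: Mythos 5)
Your argument is correct and takes essentially the same route as the paper, which in fact offers no proof at all of this corollary: it simply asserts that the statement is ``immediate'' from the structure result of Lemma~\ref{lem:lspace}, and your staircase analysis (a single embedded $\gamma_0$ with $n_i(K)\leq 1$ forces a monotone descent from level $\tau(K)$ to level $-\tau(K)$, whence $g=\tau$ and $\V(K)=\sum_i n_i(K)=2\tau(K)-1$) is exactly the elaboration the author had in mind. The only point worth flagging is that the statement implicitly assumes $K$ is nontrivial (otherwise $\V(K)=2g(K)-1$ would be negative), which you correctly build into your argument and which is consistent with how the corollary is invoked later in the paper.
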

Up to mirror image, we may suppose without loss of generality that $\tau(K)>0$ for an L-space knot $K.$ We shall make this assumption for the rest of this section.
\subsection{Gradings}
Here we discuss how to obtain relative grading information from the pairing of immersed curve invariants. Recall that, for any knot $K$ and $\frac{p}{q} \neq 0,$ the vector space $\widehat{HF}(S^3_K(\frac{p}{q}))$ has a relative $\mathbf{Z}$-grading called the \textit{Maslov} grading, denoted $M(x)$ for any generator $x$ (the case of zero-framed surgery will not be of concern to us, as it cannot be part of a cosmetic surgery pair). The relative grading between two generators may be computed from the immersed curves as follows: suppose $x,y \in L_{p/q} \cap \Gamma$ and suppose further that there exists an immersed bigon in $T_{\bullet}$ with one edge lying in $L_{p/q}$ and the other in $\Gamma$ as in Figure \ref{fig:bigon}. Let $k$ be the signed count of the number of times the bigon crosses the puncture/basepoint.  Then:
\begin{equation}\label{eq:grading}
M(x)-M(y)=1-2k.
\end{equation}
The same computation may be performed in the covering space $\overline{T}_{\bullet},$ and moreover, it also still holds in the further covering space $\widetilde{T}_{\bullet} = \mathbf{R}^2\setminus \mathbf{Z}\times (\frac{1}{2}+\mathbf{Z}),$ corresponding to the universal covering space of the torus.
\begin{figure}
\centering
\includegraphics[width=.25\textwidth]{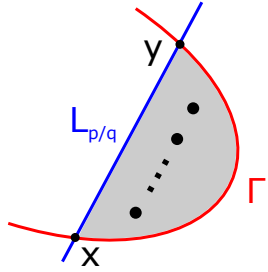}
\caption{A bigon from $x$ to $y.$}
\label{fig:bigon}
\end{figure}
\begin{rmk}
In general, for a given pair of generators, there need not exist such a bigon between them (for instance, if the generators are in distinct Spin$^c$-structures or if they belong to distinct components of $\Gamma$). However, we are only concerned with L-space knots in this section, and all relative grading information within any given Spin$^c$-structure may be deduced from equation \eqref{eq:grading} for surgeries on those knots.
\end{rmk}

For surgery on an L-space knot, the relative grading structure within each Spin$^c$-structure has a particularly simple structure:  the gradings of the generators may be arranged in a ``zig-zag" pattern. More precisely, define
\[
\mathcal{Z}_n := \left.\left\lbrace (m_i)_{i=1}^{2n+1}\in \mathbf{Z}^{2n+1} \left| \begin{matrix}
m_i-m_{i+1} \in 2\mathbf{Z}_{>0} - 1 &\textrm{ for }i \textrm{ even}\\
m_{i+1}-m_i \in 2\mathbf{Z}_{>0} - 1 &\textrm{ for }i \textrm{ odd}
\end{matrix} \right. \right\rbrace\right/ \sim
\]
where $(m_i) \sim (m'_i)$ if there exists some $N\in \mathbf{Z}$ and  $\sigma \in \mathfrak{S}_{2n+1}$ such that $m'_i = N + m_{\sigma(i)}$ for each $i.$ Put $\mathcal{Z} := \bigcup_{n\geq 0} \mathcal{Z}_n.$

\begin{lem}
Let $K$ be an L-space knot and let $\frac{p}{q}>0$ be given.  Then for each $\mathfrak{s}\in Spin^c(S^3_K(\frac{p}{q})),$ the gradings of the generators of $\widehat{HF}(S^3_K(\frac{p}{q}),\mathfrak{s})$ may be arranged into a sequence $(x_i)\in \mathcal{Z}.$
\end{lem}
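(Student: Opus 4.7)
My plan is to read off the grading structure directly from the geometry of the immersed-curve pairing, using the bigon formula \eqref{eq:grading}. By Lemma \ref{lem:lspace}, $\overline{\Gamma}$ is the single embedded zig-zag curve $\gamma_0$, which in its pulled-tight form turns alternately around successive punctures. The generators of $\widehat{HF}(S^3_K(\frac{p}{q}),\mathfrak{s})$ are the points of $\gamma_0 \cap \overline{L^{\mathfrak{s}}}_{p/q}$, and I would list them $x_1, x_2, \ldots, x_N$ in the order they appear as one traverses the oriented line $\overline{L^{\mathfrak{s}}}_{p/q}$.

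The main step is to construct, between each pair of consecutive intersections $x_i, x_{i+1}$, an embedded bigon bounded by a sub-arc of $\overline{L^{\mathfrak{s}}}_{p/q}$ and a sub-arc of $\gamma_0$, and to show that these bigons alternate sides of $\overline{L^{\mathfrak{s}}}_{p/q}$ as $i$ varies. The alternation is essentially a local statement: at a transverse intersection, the two wedges adjacent along one curve sit on opposite sides of the other curve, so the bigons immediately to the ``left'' and ``right'' of $x_i$ along $\overline{L^{\mathfrak{s}}}_{p/q}$ necessarily lie on opposite sides of the line. Existence of the bigons, and the fact that each encloses at least one puncture, should follow from $\gamma_0$ and $\overline{L^{\mathfrak{s}}}_{p/q}$ being in minimal position (Remark \ref{rmk:tight}): a puncture-free bigon would permit a homotopy canceling $\{x_i,x_{i+1}\}$, contradicting minimality. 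Passing to the universal cover $\widetilde{T}_\bullet$, where $\gamma_0$ unfolds into disjoint embedded arcs in $\mathbf{R}^2$ minus the puncture lattice, should make this geometrically transparent.

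Applying \eqref{eq:grading} to each bigon then gives $M(x_i) - M(x_{i+1}) = 1 - 2k_i$, where the signed puncture count $k_i$ has a definite sign depending on which side of $\overline{L^{\mathfrak{s}}}_{p/q}$ the bigon lies on (all punctures inside contribute with the same sign). Since the sides alternate and each $|k_i|\ge 1$, the differences $1-2k_i$ are odd integers whose signs alternate strictly with $i$; in particular $N$ must be odd, say $N=2n+1$. Relabeling so that local maxima of the Maslov grading occupy even positions (a permutation absorbed by the $\sim$ in the definition of $\mathcal{Z}_n$, along with the integer shift corresponding to the relative nature of $M$) yields an element of $\mathcal{Z}_n$. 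The hard part will be a careful case analysis when the sub-arc of $\gamma_0$ between $x_i$ and $x_{i+1}$ traverses the long non-vertical slope-$(2g-1)$ segment rather than simply rounding one puncture: such a bigon may enclose several punctures at once, but positivity of intersection together with the embedded zig-zag structure of $\gamma_0$ should still force all enclosed punctures to contribute with the same sign, preserving the alternation pattern and the oddness of $|1-2k_i|$.
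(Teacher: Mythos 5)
Your overall strategy is the same as the paper's: list the intersection points of $\gamma_0$ with the lifted line from left to right, connect consecutive points by bigons that alternate sides of the line, use minimal position to force each bigon to cover at least one puncture, and read off the alternating odd differences from \eqref{eq:grading}. The case analysis you worry about at the end (bigons involving the slope-$(2g-1)$ segment) is not an issue: within a single bigon all covered punctures lie on the same side of the line, so they contribute with a common sign to the signed count $k$, which is all the argument needs.

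There is, however, a concrete gap: you never use the hypothesis $\frac{p}{q}>0$, and your inference ``since the sides alternate \ldots in particular $N$ must be odd'' does not follow. Alternation of the bigon sides is consistent with any number of intersection points and with either side occurring first; it gives neither oddness of $N$ nor the specific pattern required by $\mathcal{Z}_n$, namely that the sequence begins and ends at a local \emph{minimum} of the Maslov grading (even positions are the maxima). Nor can the permutation in the definition of $\sim$ rescue a wrong pattern: a multiset realizing the pattern $\max,\min,\dots,\min,\max$ (e.g.\ gradings $2,0,2,0,2$) cannot be rearranged into a valid element of $\mathcal{Z}_2$. The paper closes exactly this gap by observing that because the line has positive slope, the leftmost bigon must lie below the line and the rightmost above it; combined with alternation this forces an even number of bigons (hence $N=2n+1$ odd) and places the local minima at the odd positions, which is what membership in $\mathcal{Z}_n$ requires. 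You need to add this endpoint analysis, which is where the hypothesis $\frac{p}{q}>0$ enters.
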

\begin{proof}
Let $\Gamma$ be the immersed curve invariant associated to $K.$ We consider the lift $\widetilde{\Gamma}\subset \widetilde{T}_{\bullet}$ and a lift $\widetilde{L}_{p/q}$ of $L_{p/q}$ to $\widetilde{T}_{\bullet}$ corresponding to $\mathfrak{s};$ note that this latter lift is not unique, although it is unique up to certain horizontal translations. One may take $\widetilde{L}_{p/q}$  to be a line of slope $\frac{p}{q},$ and with $\widetilde{\Gamma}$ in ``pulled tight" configuration, the generators of $\widehat{HF}(S^3_K(\frac{p}{q})$ correspond to the points in $\widetilde{\Gamma} \cap \widetilde{L}_{p/q}.$ By minimality, we see that any bigon between any two such points must cover at least one puncture. Label the intersection points $x_1,x_2,\dots$ from left to right, as in Figure \ref{fig:LSbigons}. We see that adjacent intersection points are connected by a bigon which covers some punctures, and moreover, such bigons alternate begin above or below the line $\widetilde{L}_{p/q}.$ As $\frac{p}{q}>0$ by hypothesis, we see that the leftmost bigon must be below the line, while the rightmost must be above it. Hence, there are an odd number of generators $x_1,\dots,x_{2n+1},$ and for each even $2\leq i \leq 2n,$ there is a bigon from $x_{i-1}$ to $x_i$ as well as one from $x_{i+1}$ to $x_i.$ Applying formula \eqref{eq:grading}, this implies that $M(x_i)-M(x_{i-1}) = 2k-1$ for some $k \in \mathbf{Z}_{>0}$ (corresponding to the number of punctures in the bigon), and similarly $M(x_i)-M(x_{i+1}) = 2k'-1$ for some $k' \in \mathbf{Z}_{>0}.$  Hence, the equivalence class of $(x_1,\dots,x_{2n+1})$ is in $\mathcal{Z}_n.$
\end{proof}
\begin{figure}
\centering
\includegraphics[width=.65\textwidth]{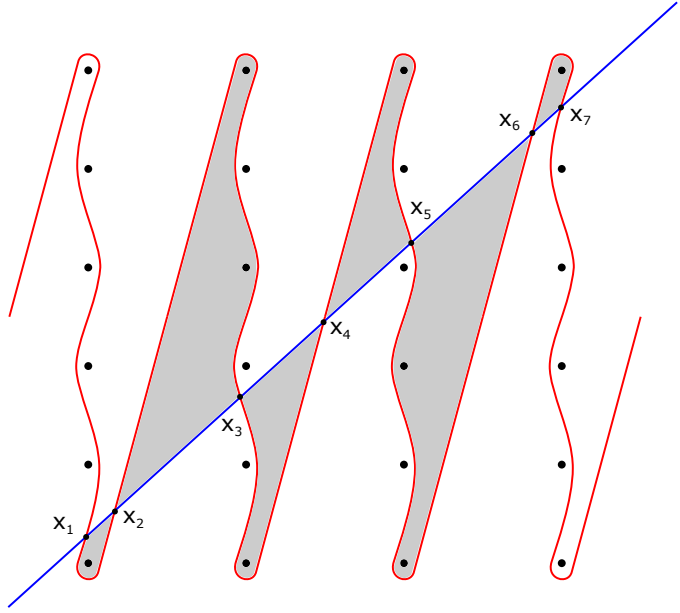}
\caption{An illustrative example of relative grading computation for sugery on an L-space knot.}
\label{fig:LSbigons}
\end{figure}

\subsection{Proof of Theorem \ref{thm:lspace}}
We are now in a position to prove Theorem \ref{thm:lspace}. First, we give a geometric interpretation to the constraint given in Theorem \ref{thm:main}. Recall from Lemma \ref{lem:lspace} that the immersed curve invariant of an L-space knot consists of a single embedded component. In the covering space $\widetilde{T}_{\bullet},$ this curve may be seen, after an appropriate homotopy, to consist entirely of (slight perturbations of) vertical segments between heights $-2g+\frac{1}{2}$ and $2g-\frac{1}{2}$ where $g=g(K)$ is the genus of the knot, as well as diagonal segments of slope $2g-1.$ Let us now introduce punctures along the diagonal segments, at the same heights as the existing punctures (that is, at half-integer values in $(-2g,2g)$ and we perturb the diagonal segments to ``weave" in between the new punctures in a symmetric way as the vertical segments weave through the original punctures. See Figure \ref{fig:LSpunc} for an illustration; we depict the new punctures with an asterisk ($\ast$), and we denote the new space by $\widetilde{T}_{\bullet\ast}.$
\begin{figure}
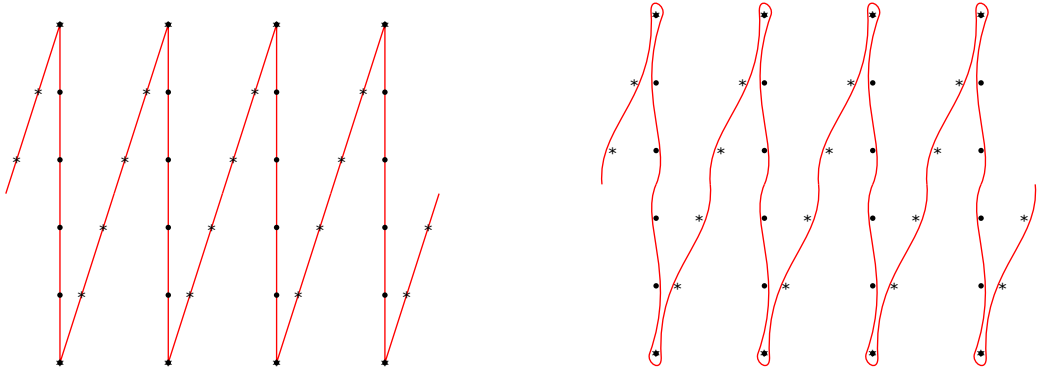

\centering
\includegraphics[scale=.18]{Lspace_unper} \hfill
\includegraphics[scale=.18]{Lspace_per}
\caption{On the left, a schematic of the general structure of the immersed curve invariant of an L-space knot lifted to $\widetilde{T}_{\bullet\ast}.$ On the right, the curve after a perturbation to avoid both kinds of punctures; the perturbations are done in a symmetric way with respect to the two kinds of punctures.}
\label{fig:LSpunc}
\end{figure}

Consider the shear transformation $\Phi_g:\mathbf{R}^2\rightarrow\mathbf{R}^2$ given by the matrix $\begin{pmatrix}
1 & -\frac{1}{2g-1}\\
0&1
\end{pmatrix}.$ Under this transformation, we see that the $\ast$ punctures in $\widetilde{T}_{\bullet\ast}$ become vertically arranged, and the $\bullet$ punctures become diagonally arranged. Moreover, $\Phi_g(\widetilde{\Gamma})$ is a mirror image of $\widetilde{\Gamma}$ so that, after performing the vertical reflection $r_V:(x,y)\mapsto(x,-y)$, we see that $r_V(\Phi_g(\widetilde{\Gamma}))=\widetilde{\Gamma}$ setwise. Under $r_V \circ \Phi_g,$ the two kinds of punctures become interchanged; see Figure \ref{fig:LStransform} for an illustration.
\begin{figure}
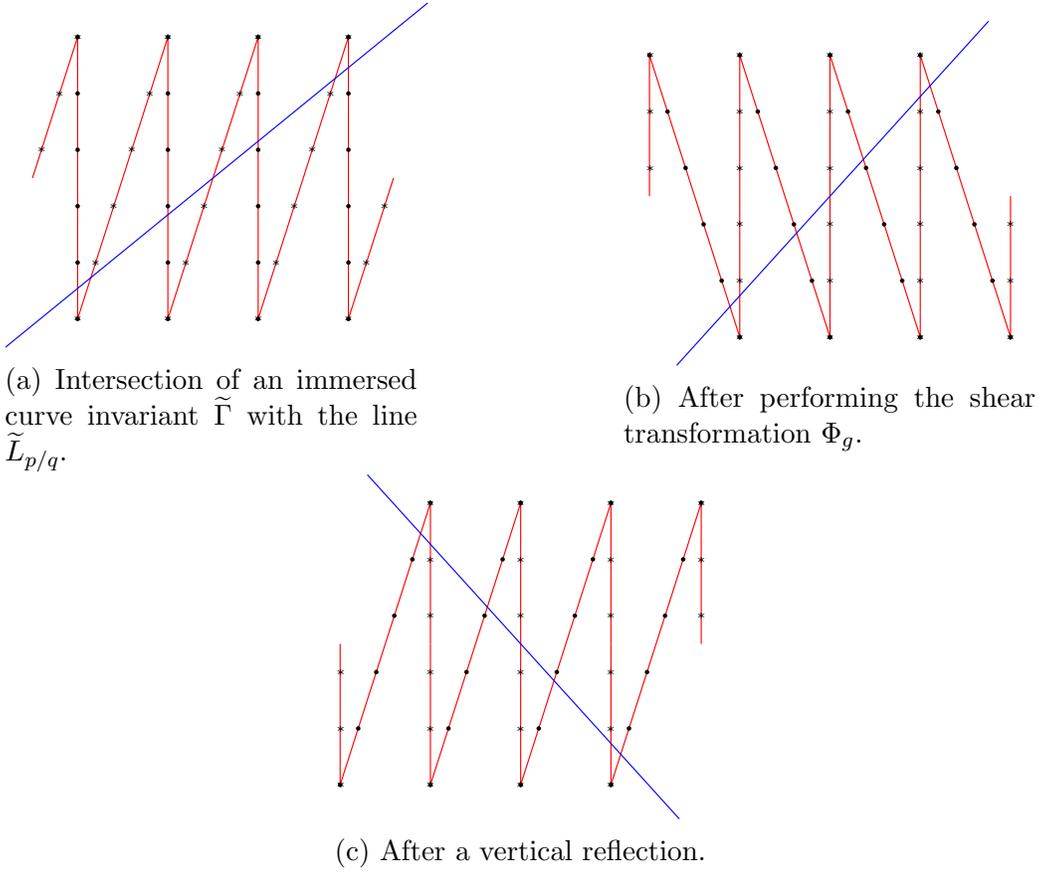

\centering
\begin{subfigure}{.4\textwidth}
\centering
\includegraphics[scale=.15]{Lspace_start}
\caption{Intersection of an immersed curve invariant $\widetilde{\Gamma}$ with the line $\widetilde{L}_{p/q}.$}
\label{fig:start}
\end{subfigure}\hfill
\begin{subfigure}{.4\textwidth}
\centering
\includegraphics[scale=.15]{Lspace_shear}
\caption{After performing the shear transformation $\Phi_g.$}
\label{fig:shear}
\end{subfigure}
\\
\begin{subfigure}{.4\textwidth}
\centering
\includegraphics[scale=.15]{Lspace_shear_ref}
\caption{After a vertical reflection.}
\label{fig:ref}
\end{subfigure}
\caption{The sequence of transformations that give rise to the bijection $psi$ of Spin$^c$-structures.}
\label{fig:LStransform}
\end{figure}
Now, given a slope $\frac{p}{q}>0$ and an $\mathfrak{s}\in Spin^c(S^3_K(\frac{p}{q})),$ there exists a lift $\widetilde{L^{\mathfrak{s}}}_{p/q}$ of $L_{p/q}$ such that the intersection with $\widetilde{\Gamma}$ corresponds to $\widehat{HF}(S^3_K(\frac{p}{q}),\mathfrak{s})$ as in Theorem \ref{thm:pairS} (forgetting about the $\ast$ punctures). This is a line of slope $\frac{p}{q},$ and so $r_V(\Phi_g(\widetilde{L^{\mathfrak{s}}}_{p/q}))$ is a line of slope $\frac{p}{-q+p/(2g-1)}.$ Thus, its intersection with $\widetilde{\Gamma}$ corresponds to $\widehat{HF}(S^3_K(\frac{p}{-q+p/(2g-1)}),\mathfrak{t})$ for some $\mathfrak{t}\in Spin^c(S^3_K(\frac{p}{-q+p/(2g-1)}))$ (forgetting about the $\bullet$ punctures). Hence, we see that $r_V\circ \Phi_g$ induces a bijection $\psi:Spin^c(S^3_K(\frac{p}{q})) \rightarrow Spin^c(S^3_K(\frac{p}{-q+p/(2g-1)}))$ such that $\rk  \widehat{HF}(S^3_K(\frac{p}{q}),\mathfrak{s}) = \rk \widehat{HF}(S^3_K(\frac{p}{-q+p/(2g-1)}),\psi(\mathfrak{s}))$ for each $\mathfrak{s}\in Spin^c(S^3_K(\frac{p}{q})).$ This map is well-defined, as a lift $\widetilde{L}$ corresponding to a given Spin$^c$-structure is unique up to horizontal integral translations.
Note that $\frac{p}{-q+p/(2g-1)}<0$ exactly when $\frac{p}{q}<2g-1,$ and in fact, if a genus $g$ L-space knot admits chirally cosmetic surgeries along slopes of opposite signs, the slopes must be related by the above construction:
\begin{lem}\label{lem:LSslopes}
Let $K$ be a positive ($\tau(K)>0$) L-space knot of genus-$g.$ If $S^3_K(\frac{p}{q})\cong -S^3_K(\frac{p}{q'})$ with $p,q>0,$ $q'<0,$ then $\frac{p}{q}<2g-1$ and $q'=-q+\frac{p}{2g-1}.$
\end{lem}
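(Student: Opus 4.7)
The plan is to deduce this directly from the general obstruction in Theorem \ref{thm:main}, together with the structural consequences of being an L-space knot recorded in Corollary \ref{cor:LS}. There is essentially no new geometry to do here; the shear-transformation picture built up just before the lemma motivates the statement, but the actual proof is arithmetic once we feed in the L-space values of $\tau(K)$ and $\V(K)$.

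First I would observe that since $K$ is a positive L-space knot, Corollary \ref{cor:LS} gives $\tau(K) = g(K) > 0$ and $\V(K) = 2g(K) - 1$. Thus the hypotheses of Theorem \ref{thm:main} are satisfied by any chirally cosmetic pair $(p/q, p/q')$ with $p,q > 0$ and $q' < 0$. Theorem \ref{thm:main} then yields
\[
\tfrac{p}{q} \leq 2g - 1 \quad\text{and}\quad 2p = (\V(K) + 2g - 1)(q + q').
\]
Substituting $\V(K) = 2g-1$ into the second equation gives $2p = 2(2g-1)(q+q')$, i.e.\ $p = (2g-1)(q+q')$, which rearranges to $q' = -q + \frac{p}{2g-1}$ as claimed.

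It remains only to upgrade $p/q \leq 2g-1$ to a strict inequality. Suppose for contradiction that $p/q = 2g-1$, so $p = q(2g-1)$. Combining with $p = (2g-1)(q+q')$ gives $q(2g-1) = (2g-1)(q+q')$, and since $2g - 1 > 0$ (as $g \geq 1$ for a nontrivial knot) we conclude $q' = 0$. This contradicts the hypothesis $q' < 0$, so $p/q < 2g-1$.

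I do not anticipate a real obstacle: the entire content of the lemma is packaged in Theorem \ref{thm:main} once one recognizes, via the L-space criterion of Lemma \ref{lem:lspace} and its corollary, that the invariant $\V(K)$ takes the extremal value $2g(K) - 1$. The geometric picture with the shear $\Phi_g$ and the bijection $\psi$ of $\mathrm{Spin}^c$-structures is what will be needed for the eventual proof of Theorem \ref{thm:lspace}, where one must actually compare the Heegaard Floer homologies (with their relative Maslov gradings) of the two surgeries; but for Lemma \ref{lem:LSslopes} itself only the rank-level identity from Theorem \ref{thm:main} is used.
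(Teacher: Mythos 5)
Your proposal is correct and follows essentially the same route as the paper: invoke Corollary \ref{cor:LS} to get $\tau(K)=g(K)$ and $\V(K)=2g-1$, then feed these into Theorem \ref{thm:main}. Your extra step upgrading $p/q\leq 2g-1$ to a strict inequality (by noting that equality would force $q'=0$) is a detail the paper's proof glosses over --- it cites Theorem \ref{thm:main} as if it already gave strictness --- so your version is, if anything, slightly more careful.
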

\begin{proof}
By Corollary \ref{cor:LS}, $\tau(K)=g(K)=g,$ and so, we apply Theorem \ref{thm:main} to conclude that $\frac{p}{q}<2g-1$ and $(\V(K)+2g-1)(q+q')=2p.$ Corollary \ref{cor:LS} also implies that $\V(K)=2g-1,$ so we find that $(2g-1)(q+q')=p,$ from which the second conclusion follows.
\end{proof}
We now turn our attention to how the gradings compare between two putative cosmetic surgeries. 
 To do this, we first define an operation $S_i:\mathbf{Z}^{2n+1}\rightarrow\mathbf{Z}^{2n+1}$ for $1\leq i\leq 2n$ defined by:
\[
S_i\left((m_j)_{j=1}^{2n+1}\right) = \begin{cases}
(m_1,\dots,m_i,m_{i+1} -2, \dots, m_{2n+1}-2) & \text{for }i\text{ even}\\
(m_1,\dots,m_i,m_{i+1} +2, \dots, m_{2n+1}+2) & \text{for }i\text{ odd}
\end{cases}
\]
\begin{lem}\label{lem:shifts}
Let $K$ be a positive L-space knot of genus $g,$ let $p,q>0,$ and put $q'=-q+\frac{p}{2g-1}.$ For each $\mathfrak{s}\in Spin^c(S^3_K(\frac{p}{q})),$ there exists a sequence $i_1,\dots,i_N$ (possibly empty) as well as arrangements $(x_j)_{j=1}^{2n+1}$ and $(y_j)_{j=1}^{2n+1}$ of the generators of $\widehat{HF}(S^3_K(\frac{p}{q}),\mathfrak{s})$ and $\widehat{HF}(S^3_K(\frac{p}{q'}),\psi(\mathfrak{s}))$ such that:
\[
-(M(y_1),\dots,M(y_{2n+1})) \sim S_{i_1}\circ\dots\circ S_{i_N}(M(x_1),\dots,M(x_{2n+1}))
\]
\end{lem}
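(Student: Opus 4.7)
The plan is to track Maslov gradings through the geometric transformation $r_V \circ \Phi_g$ used to define $\psi$, applying the bigon formula \eqref{eq:grading} on both sides. First, I would place $\widetilde{\Gamma}$ in $\widetilde{T}_{\bullet\ast}$ in the symmetric pulled-tight form of Figure \ref{fig:LSpunc} (right), so that it weaves symmetrically between the $\bullet$- and $\ast$-punctures; by Lemma \ref{lem:lspace} it is a single embedded curve. Label the generators of $\widehat{HF}(S^3_K(p/q), \mathfrak{s})$ as $x_1, \ldots, x_{2n+1}$ from left to right along $\widetilde{L}^{\mathfrak{s}}_{p/q}$, and let $D_j$ be the unique immersed bigon between $x_j$ and $x_{j+1}$. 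Write $a_j$ and $b_j$ for the numbers of $\bullet$- and $\ast$-punctures inside $D_j$, respectively. By \eqref{eq:grading}, $M(x_{j+1}) - M(x_j) = (-1)^{j}(1 - 2a_j)$, the sign encoding whether $D_j$ sits above or below $\widetilde{L}^{\mathfrak{s}}_{p/q}$.

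Next, I would use that $r_V \circ \Phi_g$ preserves $\widetilde{\Gamma}$ setwise, swaps the two families of punctures, and sends $\widetilde{L}^{\mathfrak{s}}_{p/q}$ to a valid lift realizing $\widetilde{L}^{\psi(\mathfrak{s})}_{p/q'}$. Letting $y_j$ denote the image of $x_j$, the bigon $D_j$ maps to a bigon $D_j'$ between $y_j$ and $y_{j+1}$ whose total puncture content is the same as that of $D_j$, but with the roles of $\bullet$ and $\ast$ interchanged; in particular $D_j'$ contains exactly $b_j$ $\bullet$-punctures. Applying \eqref{eq:grading} in the target configuration and accounting for the sign reversal induced by the reflection $r_V$ flipping the above/below position of each bigon, I would obtain $M(y_{j+1}) - M(y_j) = (-1)^{j+1}(1 - 2 b_j)$. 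Consequently,
\[
\bigl(-M(y_{j+1}) + M(y_j)\bigr) - \bigl(M(x_{j+1}) - M(x_j)\bigr) = 2(-1)^{j+1}(a_j - b_j),
\]
which is an even integer for every $j$.

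Finally, each operation $S_i$ shifts exactly the $i$-th adjacent difference $m_{i+1} - m_i$ by a fixed signed constant $\pm 2$ (with the sign determined by the parity of $i$), leaving the other adjacent differences unchanged. Thus iterating $S_j$ with the appropriate multiplicity for each $j$ realizes the prescribed even adjustments, and the equivalence relation $\sim$ on $\mathcal{Z}_n$ absorbs any overall Maslov constant that remains. I expect the main obstacle to be establishing \emph{sign consistency}, namely that for each $j$ the sign of $(-1)^{j+1}(a_j - b_j)$ agrees with the fixed direction in which $S_j$ can shift. I would resolve this geometrically, exploiting the symmetric placement of $\widetilde{\Gamma}$ between the two puncture families together with the fact that $\widetilde{L}^{\mathfrak{s}}_{p/q}$ has positive slope while $\widetilde{L}^{\psi(\mathfrak{s})}_{p/q'}$ has negative slope, so that the above/below assignment of consecutive bigons is forced to flip in the pattern dictated by the parity conventions in the definition of $S_j$.
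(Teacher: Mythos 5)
Your proposal follows essentially the same route as the paper's proof: track each bigon through $r_V\circ\Phi_g$, compare its $\bullet$- and $\ast$-puncture counts, and realize the resulting even discrepancies in adjacent Maslov differences by iterated applications of the $S_i$. The sign-consistency issue you defer is precisely the point the paper settles by observing that, since $\frac{p}{q}>0$, every $\bullet$ puncture covered by a bigon forces the $\ast$ puncture at the same height to its right to be covered as well, so each bigon covers at least as many $\ast$ as $\bullet$ punctures and the shifts always go in the direction the corresponding $S_i$ permits.
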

\begin{proof}
We choose a lift $\widetilde{L^{\mathfrak{s}}}_{p,q} \subset \widetilde{T}_{\bullet\ast}$ corresponding to $\mathfrak{s}.$ We label the generators $x_1,\dots,x_{2n+1}\in \widehat{HF}(S^3(\frac{p}{q}),\mathfrak{s})$ and $y_1,\dots,y_{2n+1}\in \widehat{HF}(S^3(\frac{p}{q'}),\psi(\mathfrak{s}))$ corresponding to the intersection points $\widetilde{\Gamma}\cap \widetilde{L^{\mathfrak{s}}}_{p,q}$ and $r_V(\Phi(\widetilde{\Gamma}\cap \widetilde{L^{\mathfrak{s}}}_{p,q})),$ respectively, ordered from left to right. Let us also number the bigons between successive generators from left to right; we see that $r_V\circ \Phi$ provides an identification between the bigons occurring in the two kinds of intersection diagrams, so we shall consider just one kind of diagram, as in Figure \ref{fig:LSbigon2}. Equation \eqref{eq:grading} gives the relative gradings $(M(x_1),\dots,M(x_{2n+1}))$ as well as $-(M(y_1),\dots,M(y_{2n+1}))$ as follows: let $k_i$ be the number of $\bullet$ punctures covered by the $i$th bigon, and similarly let $k'_i$ be the number of $\ast$ type punctures covered by that bigon. Up to an overall shift, we may take $M(x_1)=-M(y_1).$ Notice that, since $\frac{p}{q}>0,$ $k_i\leq k'_i,$ and whenever $k'_i>k_i,$ we see that there is a contribution of $\pm 2(k'_i-k_i)$ to $M(x_j)+M(y_j)$ for all $j\geq i,$ with the sign depending on the parity of $i.$ In fact, with careful consideration of the signs and parties, we find that that the shift corresponds precisely to $(k'_i-k_i)$ applications of $S_i$ to $(M(x_j))_{j=1}^{2n+1}.$
\end{proof}
\begin{figure}
\centering
\includegraphics[height=.4\textheight]{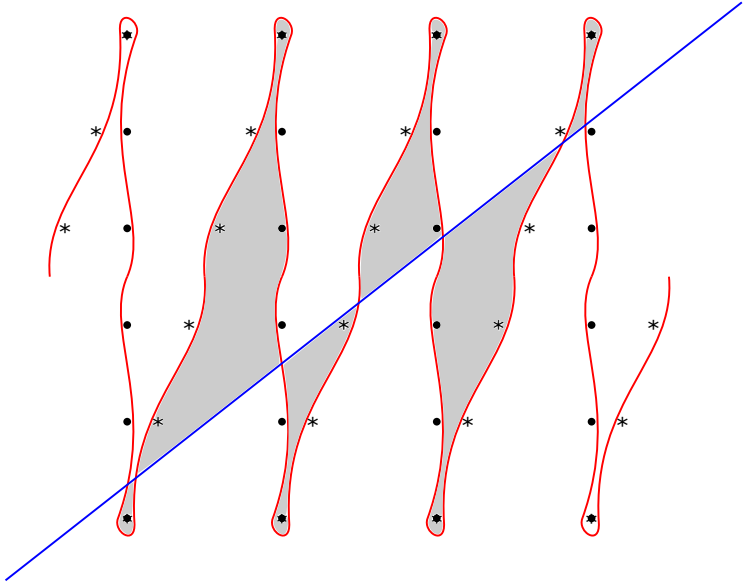}
\caption{An example of relative grading computation for a pair of Spin$^c$-structures related by $\psi.$}
\label{fig:LSbigon2}
\end{figure}

\begin{claim}\label{cla:shift}
Moreover, if $g\geq2,$ there exists an $\mathfrak{s}\in Spin^c(S^3_K(\frac{p}{q}))$ such that the corresponding sequence $i_1,\dots,i_N$ in the above lemma is nonempty.
\end{claim}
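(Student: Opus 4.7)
The hypothesis $g \geq 2$ is precisely the threshold at which the diagonal segments of $\widetilde{\Gamma}$ in $\widetilde{T}_{\bullet \ast}$ carry $\ast$-punctures in their interiors: each diagonal has slope $2g-1$, spans a height range of length roughly $2g - 1$, and hence contains $2g - 2$ half-integer heights strictly in its interior, which equals $0$ for $g = 1$ but is at least $2$ for $g \geq 2$. The plan is to use an interior $\ast$-puncture $P^*$ of some diagonal segment to construct, for a suitable Spin$^c$-structure $\mathfrak{s}$, a bigon whose only enclosed puncture is $P^*$, so that $k_i = 0 < 1 \leq k'_i$ and a nontrivial $S_i$ must appear in the shift sequence.

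Concretely, I would first fix such a $P^*$. Around $P^*$ the curve $\widetilde{\Gamma}$ makes a symmetric ``weaving bulge,'' exactly as the vertical stripes weave around the $\bullet$-punctures on $\mu$. I would then choose the lift $\widetilde{L}^\mathfrak{s}_{p/q}$ so that its $y$-intercept places the line on the concave side of this bulge. Since $0 < p/q < 2g - 1$ by Lemma \ref{lem:LSslopes}, the line has strictly smaller slope than the diagonal, so it meets both branches of the bulge transversely; these two intersections are consecutive (provided the bulge is small compared to the spacing of $\widetilde{\Gamma}$ elsewhere) and cobound a small bigon enclosing $P^*$. Because $P^*$ lies strictly interior to a diagonal segment, it is separated by a positive distance from every $\bullet$-puncture on the vertical lines $x \in \mathbf{Z}$, so by shrinking the bigon if necessary I can arrange that its only enclosed puncture is $P^*$ itself.

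The hardest part will be confirming that at least one lift among the $p$ Spin$^c$-structure classes can actually be arranged to pass through such a bulge on its concave side, rather than merely glancing it on the convex side. The $p$ lifts differ by integer translations in $\widetilde{T}$, while the $\ast$-punctures populate a set with horizontal spacing $\tfrac{1}{2g-1}$ along each of the infinitely many diagonals in the cover. A direct pigeonhole argument, or equivalently translating a candidate $P^*$ by the deck group to a position favorable to a fixed Spin$^c$-structure, should place one of the $p$ available lifts on the concave side of the weaving around some interior $\ast$-puncture, completing the proof.
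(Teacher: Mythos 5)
There is a genuine gap, and it sits exactly where you flag "the hardest part." Your central construction --- a small bigon whose only enclosed puncture is the $\ast$-puncture $P^*$ --- cannot occur between two generators. The generators $x_1,\dots,x_{2n+1}$ of Lemma \ref{lem:shifts} are the intersection points of $\widetilde{L^{\mathfrak{s}}}_{p/q}$ with $\widetilde{\Gamma}$ in \emph{minimal position relative to the $\bullet$-punctures only}; the $\ast$-punctures are bookkeeping devices and do not obstruct homotopy in $\widetilde{T}_{\bullet}$. A bigon covering $P^*$ and no $\bullet$-puncture is therefore an empty bigon in $\widetilde{T}_{\bullet}$, so its two corners cancel under a small homotopy and are not among the $x_j$; such a bigon contributes no $S_i$ at all. (Relatedly, the "two branches of the bulge" picture is off: the perturbed diagonal is locally a single monotone strand past $P^*$, so a transverse line of smaller slope crosses it once, not twice.) The paper instead produces a bigon between two \emph{genuine} adjacent generators --- one on the diagonal segment just past the $\ast$-puncture at height $\ell+\tfrac12$, the other on a vertical segment to the left and below --- and argues that this bigon covers the $\ast$-puncture at height $\ell+\tfrac12$ but not the $\bullet$-puncture there, while every $\bullet$-puncture it covers at lower heights is accompanied by the $\ast$-puncture to its right; hence $k'_i>k_i$ even though $k_i\geq 1$.

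The second gap is the pigeonhole step. The lifts of $L_{p/q}$ realizing the various Spin$^c$-structures pass through the discrete set of points $\bigl(\tfrac{i}{p},\tfrac{i+j}{q}+\tfrac12+\epsilon\bigr)$, while the relevant $\ast$-puncture sits at horizontal position $\tfrac{\ell+g}{2g-1}$. There is no reason for these two discrete sets to align for a generic $p$, so "a direct pigeonhole argument" does not place a lift where you need it. The paper's proof uses the arithmetic consequence of Lemma \ref{lem:LSslopes} --- namely $(2g-1)(q+q')=p$, so $(2g-1)\mid p$ --- to conclude that $\tfrac{\ell+g}{2g-1}$ is of the form $\tfrac{i}{p}$ and hence that some lift passes exactly through $\bigl(\tfrac{\ell+g}{2g-1},\ell+\tfrac12+\epsilon\bigr)$. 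This is the point where the hypothesis that the two surgeries form a chirally cosmetic pair (not merely $g\geq 2$) enters the claim, and it is absent from your argument. Your opening observation about why $g\geq 2$ matters --- that the diagonal segments then carry $\ast$-punctures at heights strictly between $-g+\tfrac12$ and $g-\tfrac12$, at least one of which lies on the correct side of $\widetilde{\Gamma}$ by symmetry --- does match the paper and is the right starting point.
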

\begin{proof}
From the proof of the lemma, it suffices to show that there exists a bigon covering more $\ast$ punctures than $\bullet$ punctures. We make use of the following fact: for some sufficiently small $\epsilon <\frac{1}{q},$ we may take the lifts of $L_{p/q}$ to pass through points of the form $(i,\frac{j}{q}+\frac{1}{2}+\epsilon)$ for $i,j\in \mathbf{Z}$ so that minimal intersection is achieved and none of the lifts intersect any of the punctures. Notice that the $\bullet$ punctures are exactly located at $(i,j+\frac{1}{2})$ for $i,j\in \mathbf{Z},$ whereas the $\ast$ punctures are located at $(i+\frac{j+g}{2g-1},j+\frac{1}{2}).$ By the symmetry of $\widetilde{\Gamma},$ not all $\ast$ punctures will be (locally) to the right/below $\widetilde{\Gamma},$ and moreover, since $g>2,$ there will be at least one $\ast$ puncture locally to the left/above $\widetilde{\Gamma}$ at a height strictly between $-g+\frac{1}{2}$ and $g-\frac{1}{2},$ say at position $(\frac{\ell +g}{2g-1},\ell+\frac{1}{2})$ for some $\ell\in(-g,g-1).$ Since lifts of $L_{p/q}$ are lines of slope $\frac{p}{q},$ they pass through points of the form $(\frac{i}{p},\frac{i+j}{q}+\frac{1}{2}+\epsilon)$ for $i,j\in \mathbf{Z}.$ By Lemma \ref{lem:LSslopes}, we see that $(2g-1)(q+q')=p,$ so that $p$ is a multiple of $(2g-1).$ Hence, there is a lift (which corresponds to some Spin$^c$-structure) passing through the point $(\frac{\ell+g}{2g-1},\ell+\frac{1}{2}+\epsilon).$ This lift will intersect the non-vertical part of $\widetilde{\Gamma}$ near the $\ast$ puncture, and it will also intersect the vertical part on the left and below height $\ell+\frac{1}{2}.$ Thus, a bigon is produced which covers the $\ast$ puncture at that height but not the $\bullet$ puncture; see Figure \ref{fig:LS_strict} for an illustration. Of course, for each $\bullet$ puncture at a lower height that is covered, the $\ast$ puncture directly to the right will also be covered, as we saw in the proof of the previous lemma, and so this bigon covers strictly more $\ast$ punctures, as desired. 
\end{proof}
\begin{figure}
\centering
\includegraphics[height=.4\textheight]{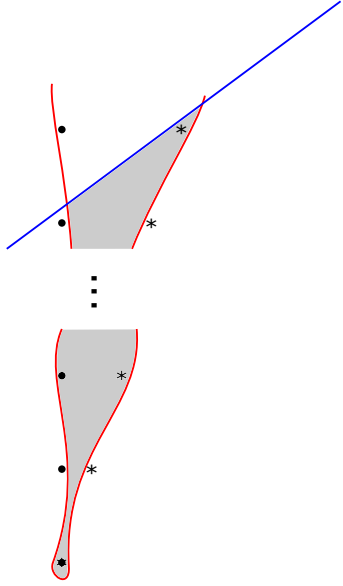}
\caption{An example of a bigon covering strictly more $\ast$ punctures than $\bullet$ punctures.}
\label{fig:LS_strict}
\end{figure}
We now wish to show that applying $S_i$ to some of the relatively graded sets coming from $\widehat{HF}(S^3_K(\frac{p}{q}))$ results in inequivalent relative gradings. First, we define $\Delta_{avg}:\mathbf{Z}^{2n+1}\rightarrow \mathbf{Q}$ as follows:
\[
\Delta_{avg}\left((m_i)_{i=1}^{2n+1}\right) := \frac{1}{n}(m_2+m_4+\dots+m_{2n})-\frac{1}{n+1}(m_1+m_3+\dots+m_{2n+1}).
\]
We claim that this descends to a well-defined map $\Delta_{avg}:\mathcal{Z}_n\rightarrow \mathbf{Q}.$ This is because any representative of an equivalence class in $\mathcal{Z}$ must be a sequence which alternates in parity; that is, $m_i \equiv m_j \mod 2 \Leftrightarrow i \equiv j \mod 2,$ and so we interpret $\Delta_{avg}$ as the difference between the averages of the two subsets partitioned by the mod 2 grading. In addition, since it is a difference of averages, it is also invariant under an overall shift, making it well-defined on relatively graded sets.
\begin{lem}\label{lem:avg_shift}
For any $(m_j)_{j=1}^{2n+1}\in \mathbf{Z}^{2n+1}$ with $n>0$ and $1\leq i\leq 2n,$
\[
\Delta_{avg}\left((m_j)_{j=1}^{2n+1}\right) < \Delta_{avg}\left(S_i\left((m_j)_{j=1}^{2n+1}\right)\right).
\]
\end{lem}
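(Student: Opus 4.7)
The plan is to treat this as a direct linear algebra calculation. Because $\Delta_{avg}$ is a linear function of the entries $m_j$, the difference $\Delta_{avg}(S_i(m)) - \Delta_{avg}((m_j))$ depends only on how many of the affected entries sit in even-indexed positions versus odd-indexed positions, and on the sign of the shift. Thus my strategy is to split on the parity of $i$, carefully count the parities of the affected indices, and directly compute the change.

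First I would handle the even case, writing $i = 2k$ with $1 \leq k \leq n$. Here $S_i$ subtracts $2$ from each of $m_{2k+1}, m_{2k+2}, \ldots, m_{2n+1}$. Among these $2n - 2k + 1$ entries, exactly $n-k$ have even index (namely $2k+2, 2k+4, \ldots, 2n$) and $n-k+1$ have odd index (namely $2k+1, 2k+3, \ldots, 2n+1$). Substituting into the definition of $\Delta_{avg}$ gives
\[
\Delta_{avg}(S_i(m)) - \Delta_{avg}((m_j)) = -\frac{2(n-k)}{n} + \frac{2(n-k+1)}{n+1} = \frac{2k}{n(n+1)},
\]
which is strictly positive since $k \geq 1$. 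Next I would do the odd case $i = 2k-1$ with $1 \leq k \leq n$: here $S_i$ adds $2$ to each of $m_{2k}, \ldots, m_{2n+1}$, and among these there are $n-k+1$ even-indexed entries and $n-k+1$ odd-indexed entries. The change is
\[
\Delta_{avg}(S_i(m)) - \Delta_{avg}((m_j)) = 2(n-k+1)\left(\frac{1}{n} - \frac{1}{n+1}\right) = \frac{2(n-k+1)}{n(n+1)},
\]
which is positive since $k \leq n$.

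Since both cases yield a strictly positive increment, the inequality follows. There is no real obstacle here: the proof is a bookkeeping computation. The only delicate point is getting the parity counts right in the even case, where one sees an asymmetry of one extra odd-indexed entry among the shifted positions, which is precisely what forces the change to be positive (rather than to vanish, as one might naively expect when shifting a tail of the sequence by a constant). I would also note in passing that the argument confirms $S_i$ sends representatives of $\mathcal{Z}_n$ to representatives of $\mathcal{Z}_n$ (the parity-alternation condition is preserved by an even shift), so the statement in terms of relatively graded sets makes sense.
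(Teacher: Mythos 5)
Your proof is correct and is essentially the same direct computation as the paper's: both reduce the claim to counting how many even-indexed versus odd-indexed entries are shifted and evaluating the resulting change $\frac{2k}{n(n+1)}$ or $\frac{2(n-k+1)}{n(n+1)}$. The only cosmetic difference is that in the even case the paper first applies an overall shift of $+2$ so the modification affects the initial segment $m_1,\dots,m_i$ rather than the tail, which yields the same increment $\frac{i}{n}-\frac{i}{n+1}$.
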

\begin{proof}
If $i$ is even, since $\Delta_{avg}$ is invariant under an overall shift, we see that:
\begin{align*}
\Delta_{avg}\left(S_i\left((m_j)_{j=1}^{2n+1}\right)\right) &= \Delta_{avg}\left(m_1,\dots,m_i,m_{i+1}-2,\dots,m_{2n+1}-2 \right)\\
&= \Delta_{avg}\left(m_1+2,\dots,m_i+2,m_{i+1},\dots,m_{2n+1} \right)\\
&=\frac{1}{n}(i+m_2+m_4+\dots+m_{2n})-\frac{1}{n+1}(i+m_1+m_3+\dots+m_{2m+1})\\
&=\frac{i}{n}-\frac{i}{n+1}+\Delta_{avg}\left((m_j)_{j=1}^{2n+1}\right)\\
&>\Delta_{avg}\left((m_j)_{j=1}^{2n+1}\right)
\end{align*}
If $i$ is odd, we put $k=2n+1-i$ and observing that $1\leq k \leq 2n,$ we have
\begin{align*}
\Delta_{avg}\left(S_i\left((m_j)_{j=1}^{2n+1}\right)\right) &= \Delta_{avg}\left(m_1,\dots,m_i,m_{i+1}+2,\dots,m_{2n+1}+2 \right)\\
&=\frac{1}{n}(k+m_2+m_4+\dots+m_{2n})-\frac{1}{n+1}(k+m_1+m_3+\dots+m_{2m+1})\\
&=\frac{k}{n}-\frac{k}{n+1}+\Delta_{avg}\left((m_j)_{j=1}^{2n+1}\right)\\
&>\Delta_{avg}\left((m_j)_{j=1}^{2n+1}\right)
\end{align*}
\end{proof}

\begin{proof}[Proof of Theorem \ref{thm:lspace}]
Since $K$ is nontrivial, by Corollary \ref{cor:LS}, $\tau(K)\neq 0$ (as usual, we may assume it to be positive), so that $K$ admits no purely cosmetic surgeries by \cite{NW}. So we only need to consider chirally cosmetic surgeries. If $g(K)=1,$ then $K$ is a trefoil, whose chirally cosmetic surgeries are known to be along slopes of the same sign by \cite{IIS}. Hence, we consider $g(K)\geq 2.$  Suppose $K$ admits chirally cosmetic surgeries along slopes $r>0$ and $r'<0.$ For each $\mathfrak{s}\in Spin^c(S^3_K(r)),$ denote the relatively graded set of generators of $\widehat{HF}(S^3_K(r),\mathfrak{s})$ by $\mathcal{G}^r_{\mathfrak{s}};$ similarly, denote the relatively graded set of generators of $\widehat{HF}(S^3_K(r'),\mathfrak{t})$ by $\mathcal{G}^{r'}_{\mathfrak{t}}$ for each $\mathfrak{t}\in Spin^c(S^3_K(r')).$ By Lemmas \ref{lem:LSslopes}, \ref{lem:shifts}, and \ref{lem:avg_shift}, for each $\mathfrak{s}\in Spin^c(S^3_K(r)),$ $\Delta_{avg}\left(\mathcal{G}^r_{\mathfrak{s}}\right) \leq \Delta_{avg}\left(-\mathcal{G}^{r'}_{\psi(\mathfrak{s})}\right).$ Moreover, by Claim \ref{cla:shift}, there exists an $\mathfrak{s}\in Spin^c(S^3_K(r))$ such that $\Delta_{avg}\left(\mathcal{G}^r_{\mathfrak{s}}\right) < \Delta_{avg}\left(-\mathcal{G}^{r'}_{\psi(\mathfrak{s})}\right).$ However, since $S^3_K(r)\cong-S^3_K(r'),$ we must have that $\mathcal{G}^r_{\mathfrak{s}} = -\mathcal{G}^{r'}_{\overline{\mathfrak{s}}}$ for each $\mathfrak{s}\in Spin^c(S^3_K(r)).$ This is in contradiction with the previous inequalities.
\end{proof}

\section*{Acknowledgments}
The author would like to thank Zolt\'{a}n Szab\'{o} and Peter Ozsv\'{a}th for encouraging work on this project and for comments on an earlier draft. The author also thanks Jonathan Hanselman for many helpful discussions.  This work was supported by NSF grants DMS-1502424 and DMS-1904628.

\printbibliography

\end{document}